\DeclareMathOperator{\GL}{GL}
\DeclareMathOperator{\Lie}{Lie}
\DeclareMathOperator{\End}{End}
\DeclareMathOperator{\Hom}{Hom}
\DeclareMathOperator{\Cent}{Cent}
\DeclareMathOperator{\Spec}{Spec}
\DeclareMathOperator{\Ga}{{\mathbf G}_a}
\DeclareMathOperator{\im}{im\,}
\DeclareMathOperator{\Aut}{Aut\,}
\DeclareMathOperator{\Dyn}{Dyn}
\DeclareMathOperator{\Sym}{Sym}
\DeclareMathOperator{\Gal}{Gal}
\DeclareMathOperator{\rank}{rank}
\DeclareMathOperator{\Gm}{{\mathbf G}_m}
\DeclareMathOperator{\SL}{SL}
\DeclareMathOperator{\Sp}{Sp}
\DeclareMathOperator{\SU}{SU}
\DeclareMathOperator{\colim}{\lim_{\longleftarrow}}
\newcommand{\ad}{\mathrm{ad}}
\newcommand{\height}{ht}
\newcommand{\id}{\text{\rm id}}
\newcommand{\pst}{s}
\DeclareMathOperator{\ZZ}{{\mathbb Z}}
\DeclareMathOperator{\QQ}{{\mathbb Q}}
\DeclareMathOperator{\NN}{{\mathbb N}}
\DeclareMathOperator{\FF}{{\mathbb F}}
\newtheorem{lem}{Lemma}[section]
\newtheorem*{thm*}{Theorem}
\newtheorem{thm}[lem]{Theorem}
\newtheorem{cor}[lem]{Corollary}
\newtheorem*{cor*}{Corollary}
\theoremstyle{definition}{  \newtheorem{rem}[lem]{Remark}  }
\theoremstyle{definition}{   }
\theoremstyle{definition}{  \newtheorem{dfn}[lem]{Definition} }
\let\l\left
\let\r\right
\newcommand{\st}{\scriptstyle}
\newcommand{\ds}{\displaystyle}
\newcommand{\CE}{C}
\begin{document}

\title{On the congruence kernel of isotropic groups\\ over rings}
\author{A. Stavrova}
\thanks{The author is a winner of the contest ``Young Russian Mathematics'', and
was supported at different stages of her work by the postdoctoral
grant 6.50.22.2014 ``Structure theory, representation theory and geometry of algebraic groups''
at St. Petersburg State University,
by the J.E. Marsden postdoctoral fellowship of the Fields Institute,
by the RFBR grants 14-01-31515-mol\_a, 13-01-00709, 12-01-33057, 12-01-31100, and the research program 6.38.74.2011 ``Structure theory and geometry
of algebraic groups and their applications in representation theory and algebraic K-theory'' at St.
Petersburg State University.}
\address{Chebyshev Laboratory\\Department of Mathematics and Mechanics\\
St. Petersburg State University\\Russia}
\email{anastasia.stavrova@gmail.com}
\subjclass[2010]{19B37, 20H05, 20G35, 19C09}
\keywords{isotropic reductive group, congruence kernel, congruence subgroup problem, elementary subgroup, Steinberg group}

\maketitle
\begin{abstract}
Let $R$ be a connected noetherian commutative ring, and let $G$ be a simply connected
reductive group over $R$ of isotropic rank $\ge 2$.
The elementary
subgroup $E(R)$ of $G(R)$ is the subgroup generated
by $U_{P^+}(R)$ and $U_{P^-}(R)$, where $U_{P^\pm}$ are the unipotent radicals of two opposite parabolic subgroups $P^\pm$
of $G$.
Assume that $2\in R^\times$ if $G$ is of type $B_n,C_n,F_4,G_2$ and $3\in R^\times$ if $G$ is of type $G_2$.
We prove that the
congruence kernel of $E(R)$, defined as the kernel of the natural homomorphism $\widehat{E(R)}\to\overline{E(R)}$
between the profinite completion of $E(R)$ and the congruence completion of $E(R)$ with respect to congruence subgroups
of finite index, is central in $\widehat{E(R)}$. In the course of the proof,
we construct Steinberg groups associated to isotropic reducive groups  and show that they are central extensions
of $E(R)$ if $R$ is a local ring.
\end{abstract}

\section{Introduction}

Let $k$ be a global field,
let $S$ be a set of primes of $k$ containing all archimedean primes, and let $O_S$ be the ring of $S$-integers of $k$. Let $G$
be an algebraic group over $k$ provided with a closed $k$-embedding $G\to\GL_{n,k}$. Set $\Gamma=G(k)\cap\GL_n(O_S)$,
and let $\Gamma_I=\Gamma\cap\GL_n(O_S,I)$, where $\GL_n(O_S,I)$ is the subgroup of matrices congruent to $1$ modulo $I$.
The classical congruence subgroup problem,
as stated by H. Bass, J. Milnor, and J.-P. Serre~\cite[p. 128]{BMS},
 asks whether every subgroup of finite index in $\Gamma$ contains an $S$-congruence subgroup
$\Gamma_I$ for some $I\neq 0$. The answer is, in general, negative; however, the deviation from the positive answer
can be measured by the congruence kernel of $\Gamma$, defined as follows. The congruence subgroups $\Gamma_I$, and the
normal subgroups of finite index respectively, constitute bases of neighbourhoods of the unit for two topologies on
$\Gamma$, called the congruence and the profinite topology. The congruence kernel $C(\Gamma)$ is the kernel of
the natural continuous surjective homomorphism $\widehat{\Gamma}\to\overline{\Gamma}$ between the two completions of $\Gamma$
with respect to these topologies, with
$\widehat{\Gamma}$ being the profinite completion, and $\overline{\Gamma}$ being the congruence completion.

The congruence kernel $C(\Gamma)$ had been completely computed (or proved to be infinite) in many cases, notably,  for $G=\SL_n$ ($n\ge 3$) and $G=\Sp_{2n}$
($n\ge 2$) in~\cite{BLS,BMS}, $G=\SL_2$ in~\cite{Serre-cong}, for all $k$-split simply connected simple groups $G$
in~\cite{Ma}, and for all $k$-isotropic simply connected simple groups $G$ in~\cite{Ragh76,Ragh86,PrRagh-met,PrRa-met}.
For several classes of $k$-anisotropic groups the problem is still open;
we refer to~\cite{PrRa-2010,PrRa-2016} for a detailed exposition of available results.

Given the abundant progress on the congruence subgroup problem in the above formulation, it is natural to ask
whether the same problem can be approached for $\Gamma=G(R)$, where $R$ is a general commutative ring instead of
a ring of algebraic $S$-integers. It turns out that at least the centrality of the
congruence kernel $C(\Gamma)$ in $\widehat{\Gamma}$, which is key for
its computation in the classical setting, can be established under much weaker assumptions. First, M. Kassabov
and N. Nikolov~\cite{KN}, while proving
that the group $\Gamma=\SL_n(\mathbb{Z}[x_1,\ldots,x_k])$ ($n\ge 3$, $k\ge 1$) has property $\tau$, established the centrality
of the congruence kernel $C(\Gamma)$. Then, A. Rapinchuk and I. Rapinchuk~\cite{RR}
proved the centrality of the congruence kernel $C(E(R))$, where $R$ is any
noetherian commutative ring and $\Gamma=E(R)$ is the elementary
subgroup in the group of points $G(R)$ of any simply connected Chevalley--Demazure group scheme $G$ of rank $\ge 2$,
with the only restriction that $2\in R^\times$ if $G$ is of type $C_n$ or $G_2$. Note that if $R=O_S$, then
$E(R)=G(R)$ for such $G$ by~\cite{Ma}, and the centrality of the congruence kernel is equivalent to its
finiteness~\cite[Theorem 2]{PrRa-2010}. Our aim is to extend the result of~\cite{RR} to isotropic simply
connected reductive $R$-group schemes in the sense of~\cite{SGA3}.

Let $R$ be a commutative ring with 1, and let $G$ be a reductive group (scheme) over $R$ in the
sense of~\cite{SGA3}. We say that $G$ has isotropic rank $\ge n$ over $R$, if every
semisimple normal $R$-subgroup of $G$ contains a $n$-dimensional split $R$-torus
$(\mathbb{G}_{m,R})^n$. If the isotropic rank is $\ge 1$, then $G$ contains a pair of opposite
parabolic $R$-subgroups $P^\pm$~\cite[Exp. XXVI, Prop. 6.1]{SGA3} which are
strictly proper, i.e. intersect properly every semisimple normal $R$-subgroup of $G$.
Let $E_P(R)$ be the subgroup of $G(R)$
generated by $U_{P^+}(R)$ and $U_{P^-}(R)$, where $U_{P^\pm}$ denotes the unipotent radical  of $P^\pm$.
The main result of~\cite{PS} says that $E_P(R)$ does not depend on a strictly proper parabolic subgroup $P$
if the isotropic rank of $G$ over every localization $R_m$ of $R$ at a maximal ideal $m$ is $\ge 2$.
Under this assumption, we call $E_P(R)$ the {\it elementary
subgroup} of $G(R)$ and denote it simply by $E(R)$. 

For any ideal $I$ of $R$, we denote by $G(R,I)$ the congruence subgroup of $G(R)$ of level $I$, that is, the kernel of
the reduction homomorphism $G(R)\to G(R/I)$. The congruence topology on $G(R)$ (respectively, on $E(R)$) is the topology
whose base of neighbourhoods of the unit consists of all subgroups $G(R,I)$ (respectively, $E(R)\cap G(R,I)$),
where $I$ is of finite index in $R$. The profinite topology on $G(R)$ and $E(R)$ is defined as in the classical setting.
In the same way, we denote by $\widehat{\Gamma}$ and $\overline{\Gamma}$ the profinite and congruence completions of
$\Gamma=E(R)$ or $G(R)$.

If the ring $R$ is connected, then the split reductive group $G_K$ has the same root system type $\Phi$ for every
$R$-algebra $K$ which is an algebraically closed field~\cite[Exp. XXII, Prop. 2.8]{SGA3}.
In this case we call $\Phi$ the absolute root system of $G$.

Our main result is the following theorem.
\begin{thm}\label{thm:main}
Let $G$ be a 
simply connected semisimple reductive group scheme defined over a connected noetherian commutative ring $R$,
such that the absolute root system $\Phi$ of $G$ is irreducible,
and $2\in R^\times$ if $\Phi=B_n,C_n,F_4,G_2$ and $3\in R^\times$ if $\Phi=G_2$.
 Assume that $G$ has isotropic rank $\ge 1$, and for any maximal ideal $m$ of $R$ the group $G_{R_m}$ has isotropic rank $\ge 2$.
Let $E(R)$ be the elementary subgroup of $G(R)$.
Then $\overline{E(R)}=\overline{G(R)}$, and the kernel of the natural homomorphism $p:\widehat{E(R)}\to\overline{G(R)}$ is central in $\widehat{E(R)}$.
\end{thm}

In the particular case where $G$ is
a Chevalley--Demazure, i.e. split, simply connected group of rank $\ge 2$,
the statement of Theorem~\ref{thm:main} specializes to the main result of~\cite{RR}, except that the latter
requires only $2\in R^\times$ for $\Phi=C_n,G_2$.
Although our methods would allow to extend this result in full,
we decided to assume the slighly stronger invertibility condition in order to make the proof for all isotropic
reductive groups more transparent, with a minimal amount of case-by-case considerations.

If $G(R)=E(R)$, then $\ker(p)$ is the full
congruence kernel of $G(R)$.  Thus,
combining the above theorem with the results of~\cite{St-poly}, we obtain the following result for quasi-split groups.

\begin{cor}\label{cor:cong-problem}
Let $G$ be a simply connected quasi-split simple algebraic group of isotropic rank $\ge 2$  over a finite field $\FF_q$,
such that
$2\in \FF_q^\times$ if $\Phi=B_n,C_n,F_4,G_2$, and $3\in \FF_q^\times$ if $\Phi=G_2$.
For any semilocal regular ring $A$ containing $\FF_q$, and any $m,n\ge 0$, the congruence kernel of
$G(A[X_1,\ldots,X_n,Y_1^{\pm 1},\ldots,Y_m^{\pm 1}])$
is central.
\end{cor}

The general idea of the proof of Theorem~\ref{thm:main} is analogous to that of the main theorem of~\cite{RR},
once we replace the standard 1-dimensional root subgroups of Chevalley groups by relative root subschemes of isotropic groups
introduced in~\cite{PS,St-serr} (see~\S~\ref{sec:prel} for their definitions and properties).
The difficulty lies in establishing two key ingredients of the proof which
had been long known for Chevalley--Demazure groups, but not for general isotropic reductive groups.

The first ingredient is the so-called centrality of the non-stable $K_2$-functor associated to $G$ over a local ring.
For Chevalley--Demazure groups over local rings, the corresponding result is due to
M. R. Stein~\cite[Theorem 2.13]{Stein-cent};
for groups of rank $\ge 3$ of simply laced or symplectic type it is even known over general
commutative rings~\cite{vdK-another,Lav,LavSi}.
V. Deodhar defined Steinberg groups for isotropic reductive groups over fields
in terms of a minimal parabolic subgroup of $G$, and established the centrality
of the corresponding non-stable $K_2$-functors~\cite[Prop. 1.16]{Deo}.
We extend the latter result to isotropic groups over local rings. In order to do that,
we are forced to consider Steinberg groups $St_P(R)$ corresponding to an arbitrary
parabolic subgroup $P$ of $G$, since a minimal parabolic subgroup over a local ring
may not stay minimal over the residue field.
The definition of $St_P(R)$ is given in \S~\ref{sec:St} in terms
of the system of relative roots $\Phi_P$ and relative root subschemes
$X_\alpha(V_\alpha)$, $\alpha\in\Phi_P$.
 If $R$ is a field and $P$ is a minimal parabolic subgroup, then $\Phi_P$
is the root system of $G$ in the sense of~\cite{BorelTits}, root subchemes $X_\alpha(V_\alpha)$ are naturally isomorphic to
$U_{(\alpha)}/U_{(2\alpha)}$, and the Steinberg group is the same as in~\cite{Steinberg,Deo}.

\begin{thm}\label{thm:K2}
Let $R$ be a local ring, $G$ be an isotropic simply connected reductive group over $R$, $P$ a
parabolic $R$-subgroup of $G$. Assume that all irreducible components of $\Phi_P$ have rank $\ge 2$. Then
the natural projection $St_P(R)\to E(R)$, where $St_P(R)$ is the
Steinberg group associated to $P$ over $R$, has central kernel.
\end{thm}

The second ingredient, whose proof is much more complicated than in the split group case, is the coincidence of
the profinite and congruence topologies on a root subscheme of $G$.
The main difference is that while root subschemes of a Chevalley--Demazure group are 1-dimensional subgroups whose groups of points
are isomorphic to
the base ring $R$, the points of root subschemes $X_\alpha(V_\alpha)$ in isotropic groups are parametrized by finitely generated projective
$R$-modules  $V_\alpha$ of varied dimension, and they are not subgroups of $G(R)$. Another source
of difficulties is that we limit ourselves with the isotropic rank $\ge 1$ assumption over the base ring, compared to the
rank $\ge 2$ assumption in the Chevalley--Demazure group case.

\begin{thm}\label{thm:E-normal}
Let $R$ be connected commutative ring, let $G$ be a reductive group over $R$ such that the absolute
root system $\Phi$ of $G$ is irreducible,
and $2\in R^\times$ if $\Phi=B_n,C_n,F_4,G_2$ and $3\in R^\times$ if $\Phi=G_2$.
Assume that for any maximal ideal $m$ of $R$ the group $G_{R_m}$ has isotropic rank $\ge 2$.
Let $P$ be a proper parabolic subgroup of $G$, and let $E(R)=E_P(R)$ be the elementary subgroup of $G(R)$.
Then for any normal subgroup $N\le E(R)$ there exists an ideal
$I$ in $R$ such that $N\cap X_\alpha(V_\alpha)=X_\alpha(IV_\alpha)$
for any $\alpha\in\Phi_P$.
\end{thm}

The corresponding result for a Chevalley--Demazure group $G$ is due to L. Vaserstein~\cite[Theorem 4]{Vas86};
see also~\cite[Theorem 3]{Abe89} or~\cite[Proposition 3.1]{RR} for alternative proofs.
More generally,~\cite[Theorem 4]{Vas86} establishes that for any $E(R)$-normalized subgroup $N$
of $G(R)$ there is an ideal $I$ of $R$ such that $N$ lies between the elementary congruence subgroup $E(R,I)$
and the inverse image $C(R,I)$ of the center of $G(R/I)$ in $G(R)$.
In our joint work with A. Stepanov~\cite{StSt}
we generalize this description of $E(R)$-normalized subgroups to isotropic reductive groups using Theorem~\ref{thm:E-normal}.

\section{Preliminaries}\label{sec:prel}

\subsection{Split tori, parabolic subgroups, and elementary subgroups}

Let $R$ be a commutative ring with 1.
Let $S=(\Gm_{,R})^N=\Spec(R[x_1^{\pm 1},\ldots,x_N^{\pm 1}])$
be a split $N$-dimensional torus over $R$. The character group
$X^*(S)=\Hom_R(S,\Gm_{,R})$ of $S$ is canonically isomorphic to $\ZZ^N$.
If $S$ acts $R$-linearly on an $R$-module $V$, this module has a natural $\ZZ^N$-grading
$$
V=\bigoplus_{\lambda\in X^*(S)}V_\lambda,
$$
where
$$
V_\lambda=\{v\in V\ |\ s\cdot v=\lambda(s)v\ \mbox{for any}\ s\in S(R)\}.
$$
Conversely, any $\ZZ^N$-graded $R$-module $V$ can be provided with an $S$-action by the same rule.

Let $G$ be a reductive group scheme over $R$ in the sense of~\cite{SGA3}. Assume that $S$ acts on $G$
by $R$-group automorphisms. The associated Lie algebra functor $\Lie(G)$ then acquires
a $\ZZ^N$-grading compatible with the Lie algebra structure,
$$
\Lie(G)=\bigoplus_{\lambda\in X^*(S)}\Lie(G)_\lambda.
$$

We will use the following version of~\cite[Exp. XXVI Prop. 6.1]{SGA3}.

\begin{lem}\label{lem:T-P}
Let $L=\Cent_G(S)$ be the subscheme of $G$ fixed by $S$. Let
$\Psi\subseteq X^*(S)$ be an $R$-subsheaf of sets closed under addition of characters.

(i) If $0\in\Psi$, then there exists
a unique smooth connected closed subgroup $U_\Psi$ of $G$ containing $L$ and satisfying
\begin{equation}\label{eq:LieUPsi}
\Lie(U_\Psi)=\bigoplus_{\lambda\in\Psi}\Lie(G)_\lambda.
\end{equation}
Moreover, if $\Psi=\{0\}$, then $U_\Psi=L$; if $\Psi=-\Psi$, then $U_\Psi$ is reductive; if $\Psi\cup(-\Psi)=X^*(S)$,
then $U_\Psi$ and $U_{-\Psi}$ are two opposite parabolic subgroups of $G$
in the sense of~\cite[Exp. XXVI D\'ef. 1.1]{SGA3}
with the common Levi subgroup
$U_{\Psi\cap(-\Psi)}$.

(ii) If $0\not\in\Psi$, then there exists a unique smooth connected unipotent closed subgroup $U_\Psi$ of $G$
normalized by $L$ and satisfying~\eqref{eq:LieUPsi}.
\end{lem}
\begin{proof}
The statement immediately follows by faithfully flat descent from the standard facts about the subgroups of
split reductive groups proved in~\cite[Exp. XXII]{SGA3}; see the proof of~\cite[Exp. XXVI Prop. 6.1]{SGA3}.
\end{proof}

\begin{dfn}\label{def:tor-roots}
The sheaf of sets
$$
\Phi=\Phi(S,G)=\{\lambda\in X^*(S)\setminus\{0\}\ |\ \Lie(G)_\lambda\neq 0\}
$$
is called the \emph{system of relative roots of $G$ with respect to $S$}.
\end{dfn}

Choosing a total order on the $\QQ$-space $\QQ\otimes_{\ZZ} X^*(S)\cong\QQ^n$, one defines the subsets
of positive and negative relative roots $\Phi^+$ and $\Phi^-$, so that $\Phi$ is a disjoint
union of $\Phi^+$, $\Phi^-$, and $\{0\}$. By Lemma~\ref{lem:T-P} the closed subgroups
$$
U_{\Phi^+\cup\{0\}}=P,\qquad U_{\Phi^-\cup\{0\}}=P^-
$$
are two opposite parabolic subgroups of $G$ with the common Levi subgroup $\Cent_G(S)$.
Thus, if a reductive group $G$ over $R$ admits a non-trivial action of a split torus,
then it has a proper parabolic subgroup. If $G$ has isotropic rank $\ge 1$ over $R$, then Lemma~\ref{lem:T-P} implies that $G$ contains a
strictly proper parabolic subgroup $P$.
The converse is true Zariski-locally, see~Lemma~\ref{lem:relroots} below.

Let $P$ be any parabolic subgroup of $G$. Since the base $\Spec R$ is affine, the group $P$ has a Levi subgroup $L_P$~\cite[Exp.~XXVI Cor.~2.3]{SGA3}.
There is a unique parabolic subgroup $P^-$ in $G$ which is opposite to $P$ with respect to $L_P$,
that is $P^-\cap P=L_P$, cf.~\cite[Exp. XXVI Th. 4.3.2]{SGA3}. We denote by $U_P$ and $U_{P^-}$ the unipotent
radicals of $P$ and $P^-$ respectively.

\begin{dfn}
\label{defn:E_P}
The \emph{elementary subgroup $E_P(R)$ corresponding to $P$} is the subgroup of $G(R)$
generated as an abstract group by $U_P(R)$ and $U_{P^-}(R)$.
\end{dfn}

Note that if $L'_P$ is another Levi subgroup of $P$,
then $L'_P$ and $L_P$ are conjugate by an element $u\in U_P(R)$~\cite[Exp. XXVI Cor. 1.8]{SGA3}, hence
$E_P(R)$ does not depend on the choice of a Levi subgroup or an opposite subgroup
$P^-$, so we do not include this datum in the notation.

\begin{thm}\label{th:PS-normality}\cite[Lemma 12, Theorem 1; SGA3]{PS}
Let $G$ be a reductive group over a commutative ring $R$, and let $A$ be a commutative $R$-algebra.

(i) If $R$ is a semilocal ring, then the subgroup $E_P(A)\le G(A)$ is the same for any
minimal parabolic $R$-subgroup $P$ of $G$. If, moreover, $G$ contains a strictly proper parabolic $R$-subgroup,
the subgroup $E_P(A)$ is the same for any strictly proper parabolic $R$-subgroup $P$.

(ii) If for any maximal ideal $m$ of $R$ the group $G_{R_m}$ has isotropic rank $\ge 2$,
then the subgroup $E_P(A)$ of $G(A)$ is the same for any
strictly proper parabolic $A$-subgroup $P$ of $G_A$.

\end{thm}
\begin{proof}
See~\cite[Theorem 2.1]{St-poly}.
\end{proof}

\begin{dfn}
Under the assumptions of Theorem~\ref{th:PS-normality},
we call the subgroup $E(A)=E_P(A)$, where $P$ is a strictly proper parabolic subgroup of $G$,
the \emph{elementary subgroup}  of $G(A)$.
\end{dfn}

\subsection{Abstract relative roots}
In order to better understand systems of relative roots $\Phi(S,G)$ of Definition~\ref{def:tor-roots},
we consider the abstract systems of relative roots derived from abstract root systems in the sense of~\cite{Bu}.
The corresponding notion was first introduced in~\cite{PS}.

Let $\Phi$ be a reduced root system in the sense of~\cite{Bu} with an inner product $(-,-)$.
Let $\Pi=\{a_1,\ldots,a_l\}$ be a fixed system of simple roots of $\Phi$; if $\Phi$ is irreducible,
we assume that the numbering follows Bourbaki~\cite{Bu}. Let $D$ be the Dynkin diagram of $\Phi$.
We identify nodes of $D$ with the corresponding simple roots in $\Pi$.

\begin{dfn}
Let $\Gamma\le\Aut(D)$ be a subgroup, and let
$J\subseteq\Pi$ be a $\Gamma$-invariant subset. Consider the projection
$$
\pi_{J,\Gamma}\colon\ZZ \Phi\longrightarrow \ZZ\Phi/\l<\Pi\setminus J;\ a-\sigma(a),\ a\in J,\ \sigma\in\Gamma\r>.
$$
The set $\Phi_{J,\Gamma}=\pi_{J,\Gamma}(\Phi)\setminus\{0\}$ is called the system of {\it relative roots} corresponding to
the pair $(J,\Gamma)$.
The {\it rank} of $\Phi_{J,\Gamma}$ is the rank of $\pi_{J,\Gamma}(\ZZ \Phi)$ as a free abelian group.
\end{dfn}

It is clear that any relative root $\alpha\in\Phi_{J,\Gamma}$ can be represented as a unique
linear combination of relative roots from $\pi_{J,\Gamma}(\Pi)\setminus\{0\}$. We call the elements of
$\pi_{J,\Gamma}(\Pi)\setminus\{0\}$ the \emph{simple relative roots}.
We say that $\alpha\in\Phi_{J,\Gamma}$ is a {\it positive} (resp. {\it negative}) relative root, if it
is a non-negative (respectively, a non-positive) linear combination of simple relative roots.
The sets of positive and negative relative roots will be denoted by
$\Phi_{J,\Gamma}^+$ and $\Phi_{J,\Gamma}^-$ respectively.
The \emph{height} $\height(\alpha)$ of a relative root $\alpha$ is the sum of coefficients in its decomposition
into a linear combination of simple relative roots (the same thing was called the {\it level} in~\cite{PS,LS}).

Observe that the action of $\Gamma$ on $\Pi$ naturally extends to an action on $\Phi$ respecting the
irreducible components of the root system. If the action on irreducible components is transitive, the system of relative roots
$\Phi_{J,\Gamma}$ is {\it irreducible}.
Clearly, any system of relative roots $\Phi_{J,\Gamma}$
is a disjoint union of irreducible ones; we call them the {\it irreducible components} of $\Phi_{J,\Gamma}$.

Let $S\subseteq \Phi$ be any subset.
We say that a root $a\in S$ is {\it $\Pi$-maximal}, if there is no simple root $b\in\Pi$
such that $a+b\in S$. Note that a root of maximal height in $S$ is automatically $\Pi$-maximal, but not vice versa.
In particular, $\Pi$-maximal roots of $\Phi^+$ are precisely the roots of maximal height (with respect to $\Pi$)
in irreducible components of $\Phi$. $\Pi$-minimal roots, and maximal and minimal relative roots in
$S\subseteq\Phi_{J,\Gamma}$ are defined similarly.
Each irreducible component of $\Phi_{J,\Gamma}$ contains a unique relative root $\tilde a$ of maximal height,
which is the image under $\pi_{J,\Gamma}$ of a maximal root in $\Phi$.

For any root $a\in\Phi$ we write
$$
a=\sum\limits_{b\in\Pi}m_b(a)b,\quad m_b(a)\in\ZZ,\quad\mbox{and}\quad a_J=\sum\limits_{b\in J}m_b(a)b.
$$
We will later need the following lemmas.

\begin{lem}\label{lem:max-min-roots}
Let $\Phi^\circ$ be an irreducible component of $\Phi$, and $\alpha\in\Phi_{J,\{\id\}}$ be a relative root.

(i) For any $\alpha\in\Phi_{J,\{\id\}}$ the set $\Phi^\circ\cap\pi_{J,\{\id\}}^{-1}(\alpha)$ contains a unique $\Pi$-maximal
and a unique $\Pi$-minimal root, which are automatically the unique roots of maximal and minimal height respectively.

(ii) If $i\alpha,j\alpha\in\Phi_{J,\{\id\}}$ for some $i,j>0$, then the difference of
maximal (respectively, minimal) roots in $\pi_{J,\{\id\}}^{-1}(i\alpha)\cap\Phi^\circ$ and
$\pi_{J,\{\id\}}^{-1}(j\alpha)\cap\Phi^\circ$ is also a root.

(iii) For any $\alpha\in\Phi_{J,\Gamma}$, the group $\Gamma$ acts transitively
on the set of all $\Pi$-maximal (respectively, $\Pi$-minimal) roots in $\pi_{J,\Gamma}^{-1}(\alpha)$.

(iv) There is an integer $m_\alpha\ge 1$ such that
$\ZZ\alpha\cap\Phi_{J,\Gamma}=\{\pm \alpha,\pm 2\alpha,\ldots,\pm m_\alpha\alpha\}$.
\end{lem}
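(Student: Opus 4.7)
For (i), existence of a maximal (resp.\ minimal) root is immediate from the finiteness of $\Phi^\circ\cap\pi^{-1}(\alpha)$. For uniqueness I would pass to the simple Lie algebra $\mathfrak g^\circ$ with root system $\Phi^\circ$ and the parabolic subalgebra $\mathfrak p^\circ=\mathfrak l^\circ\oplus\mathfrak u^\circ$ corresponding to $J\cap\Phi^\circ$; the nilradical decomposes under $\mathfrak l^\circ$ as $\mathfrak u^\circ=\bigoplus_\beta \mathfrak u_\beta$ indexed by $\beta\in\pi(\Phi^\circ)\cap\Phi_P^+$, with $\mathfrak u_\beta=\bigoplus_{a\in\Phi^\circ\cap\pi^{-1}(\beta)}\mathfrak g_a$. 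The crucial input is the classical fact that each $\mathfrak u_\beta$ is an irreducible $\mathfrak l^\circ$-module when $\mathfrak g^\circ$ is simple. A root $a\in\Phi^\circ\cap\pi^{-1}(\alpha)$ is maximal with respect to $\Pi$ exactly when $a+b\notin\Phi^\circ$ for every $b\in(\Pi\setminus J)\cap\Phi^\circ$ (adding $b\in J$ automatically leaves the fiber), equivalently when $E_a$ is annihilated by every positive simple root vector of $\mathfrak l^\circ$, i.e., is a highest-weight vector of $\mathfrak u_\alpha$. Irreducibility then gives a unique such $a$; the minimal case is symmetric, using lowest-weight vectors.

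For (ii), let $a_i$ and $a_j$ be the unique maximal roots from (i) with $i>j\ge 1$; I claim $a_i-a_j\in\Phi^\circ$, equivalently $[E_{-a_j},E_{a_i}]\ne 0$ in $\mathfrak g^\circ$. Since $\mathfrak u^\circ$ is generated as a Lie algebra by its simple-relative-root pieces $\mathfrak u_\gamma$ and each $\mathfrak u_{k\alpha}$ is irreducible, an iterated bracket argument produces a nonzero $\mathfrak l^\circ$-equivariant map $\mathfrak u_{-j\alpha}\otimes\mathfrak u_{i\alpha}\to\mathfrak u_{(i-j)\alpha}$ (establishing $(i-j)\alpha\in\Phi_P$ inductively from the base case $i-j=1$); evaluating at the pair $E_{-a_j}\otimes E_{a_i}$ of extreme-weight vectors yields a nonzero vector of weight $a_i-a_j$, which must therefore lie in a root space. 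The minimal-root case follows by applying the symmetry $a\mapsto -a$, which exchanges maxima and minima and sends $\pi^{-1}(\beta)$ to $\pi^{-1}(-\beta)$.

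Part (iii) is a direct corollary of (ii): if $k\alpha\in\Phi_P$ with $k\ge 2$, applying (ii) with $j=1$ yields $(k-1)\alpha\in\Phi_P$, and downward induction gives $\{1,\ldots,m_\alpha\}\cdot\alpha\subseteq\Phi_P$, where $m_\alpha=\max\{k>0:k\alpha\in\Phi_P\}$ is finite because $\Phi$ is. The symmetry $\Phi_P=-\Phi_P$ then supplies the negative half. The main obstacle in this plan is rigorously establishing the two Lie-algebraic inputs — irreducibility of $\mathfrak u_\alpha$ as an $\mathfrak l^\circ$-module in (i), and non-vanishing of the iterated bracket at the highest-weight pair in (ii). Both are classical facts about parabolic nilradicals in simple Lie algebras, but they require some care in the non-simply-laced types $B_n, C_n, F_4, G_2$, where a single fiber can contain roots of different lengths; a fully combinatorial alternative using the $\Pi$-dominance order and root-string arguments is possible but considerably longer and more case-by-case.
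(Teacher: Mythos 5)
Your route is genuinely different from the paper's. The paper disposes of (i)--(ii) by citing the combinatorial argument of \cite{KazS} (stated there for simple relative roots) and asserting it carries over verbatim; you instead work in the simple Lie algebra $\mathfrak g^\circ$ and reduce (i) to the irreducibility of the graded pieces $\mathfrak u_\alpha$ of the parabolic nilradical as $\mathfrak l^\circ$-modules. That reduction is correct and clean: maximality in the fiber is exactly the highest-weight condition, so uniqueness is immediate once irreducibility is known, and the irreducibility of the ``shapes'' is a theorem (Azad--Barry--Seitz, and for the Lie algebra over a field of characteristic $0$ or prime to the structure constants it holds uniformly). What your approach buys is a one-line proof of (i) at the cost of a nontrivial external input; the paper's (implicit) combinatorial argument needs no representation theory but is harder to read off from the statement alone.

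For (ii), however, there is a real gap. Your bracket map is a map into $\mathfrak u_{(i-j)\alpha}$, so you need to already know $(i-j)\alpha\in\Phi_P$; but that is precisely what (iii) is supposed to deduce \emph{from} (ii), and your parenthetical ``establishing $(i-j)\alpha\in\Phi_P$ inductively from the base case $i-j=1$'' is not a well-founded induction: knowing $(i{-}j)=1$ works tells you nothing about whether $2\alpha\in\Phi_P$ when you next try $i-j=2$, since the hypothesis of (ii) only gives you $i\alpha,j\alpha\in\Phi_P$, not the intermediate multiples. The non-vanishing at the extreme pair also deserves an actual argument, and fortunately there is one that sidesteps the circularity: the maximality of $a_j$ in its fiber means $a_j+b\notin\Phi^\circ$ for every $b\in\Pi\setminus J$, hence $[E_{-a_j},\mathfrak n^-_{\mathfrak l^\circ}]=0$, so $\psi=[E_{-a_j},-]:\mathfrak u_{i\alpha}\to\mathfrak g^\circ$ is honestly $\mathfrak n^-_{\mathfrak l^\circ}$-equivariant; since $E_{a_i}$ generates $\mathfrak u_{i\alpha}$ under $U(\mathfrak n^-_{\mathfrak l^\circ})$, vanishing of $\psi(E_{a_i})$ would force $\psi\equiv 0$, i.e.\ $b-a_j\notin\Phi^\circ$ for every $b\in\pi^{-1}(i\alpha)\cap\Phi^\circ$, and it is \emph{that} possibility you must exclude directly (not by pointing to $\mathfrak u_{(i-j)\alpha}$). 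Until that last step is supplied, (ii) is incomplete, and since (iii) rests on (ii), so is (iii).
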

\begin{proof}
The proof of (i) and (ii) for the case of maximal roots is literally the same as in~\cite[Lemma 1]{KazS}, which
deals with the case where $\alpha$ is a simple relative root. The case of minimal roots is symmetric.
To establish (iii), we observe that $\Psi=\Phi_{\Pi,\Gamma}$ is naturally a root system,
and $\Psi_{\pi_{\Pi,\Gamma}(J),\{\id\}}=\Phi_{J,\Gamma}$. Applying (i) to $\Psi_{\pi_{\Pi,\Gamma}(J),\{\id\}}$,
we conclude that the images under $\pi_{\Pi,\Gamma}$ of all $\Pi$-maximal (respectively, $\Pi$-minimal) roots in
$\pi_{J,\Gamma}^{-1}(\alpha)$ coincide. By~\cite[Lemma 3]{PS} this implies that $\Gamma$ acts
transitively on such roots.
The claim (iv) follows similarly from (ii).
\end{proof}

\begin{lem}\label{lem:beta-correct}
Assume that $\rank(\Phi_{J,\Gamma})\ge 2$ and it is irreducible. Let $\alpha,\beta\in\Phi_{J,\Gamma}$
be two relative roots.
Let $a_{max}\in\pi_{J,\Gamma}^{-1}(\alpha)$ be a $\Pi$-maximal root, and let $b_{min}\in\pi_{J,\Gamma}^{-1}(\beta)$ be
a $\Pi$-minimal root. Let $a_1,\ldots,a_n\in\Phi^+$
be a sequence of roots such that
$$
b_{min}=a_{max}-a_1-\ldots -a_n,$$ and for all $1\le i\le n$ one has
$a_{max}-\sum_{k=1}^i a_k\in\Phi$, $\pi_{J,\Gamma}(a_{max}-\sum_{k=1}^i a_k)\neq 0$, and $\pi_{J,\Gamma}(a_i)\neq 0$. Then for any
$a\in\pi_{J,\Gamma}^{-1}(\alpha)$
there is a $\Pi$-minimal root $b'_{min}\in\pi_{J,\Gamma}^{-1}(\beta)$ and a sequence of
roots $a'_i\in\pi_{J,\Gamma}^{-1}(\pi_{J,\Gamma}(a_i))$, $1\le i\le n$,
such that
$$
b'_{min}=a-a'_1-\ldots-a'_n
$$ and $a-\sum_{k=1}^i a'_k\in\Phi$ for all $1\le i\le n$.
\end{lem}
\begin{proof}
We write $\pi$ instead of $\pi_{J,\Gamma}$ for short.
By the definition of $\Pi$-maximal roots, there is a $\Pi$-maximal root
$a'_{max}\in\pi^{-1}(\alpha)$ and a sequence of simple roots
$b_1,\ldots,b_k\in\Pi\setminus J$ such that each sum $a+b_1+\ldots +b_i$ is a root and $a+b_1+\ldots+b_k=a'_{max}$.
Since by Lemma~\ref{lem:max-min-roots} all $\Pi$-maximal roots in $\pi^{-1}(\alpha)$ are permuted by $\Gamma$,
there is $\sigma\in\Gamma$ such that $\sigma(a_{max})=a'_{max}$. Clearly, $b'_{min}=\sigma(b_{min})$ is a $\Pi$-minimal
root in $\pi^{-1}(\beta)$, and the pair $a'_{max}$, $b'_{min}$ satisfies the same assumptions as the pair
$a_{max}$, $b_{min}$. Therefore, we can assume that $a'_{max}=a_{max}$ and $b'_{min}=b_{min}$ without loss of generality.

The equality $a_{max}=b_{min}+a_1+a_2+\ldots+a_n$ then rewrites as
\begin{equation}\label{eq:many-a-b}
a+b_1+\ldots+b_k=b_{min}+a_1+a_2+\ldots+a_n.
\end{equation}
We prove that this last equality implies the equality
$$
a+b_1+\ldots+b_{k-1}=b_{min}+a'_1+a'_2+\ldots+a'_n
$$
for some new positive roots $a'_i$ such that $\pi(a'_i)=\pi(a_i)$ and $b_{min}+a'_1+a'_2+\ldots+a'_i$
is a root for all $1\le i\le n$. The claim of the lemma then
follows by induction on $k$.

The equality~\eqref{eq:many-a-b} together with the definition of $b_i$'s implies that
$(b_{min}+a_1+\ldots+a_{n-1})+a_n-b_k$ is a root. Observe that none of the three roots
$b_{min}+a_1+\ldots+a_{n-1}$, $a_n$ and $-b_k$ is opposite to another. Indeed,
we know that $(b_{min}+a_1+\ldots+a_{n-1})+a_n\neq 0$ since it is a root. Since $b_k\in\Pi\setminus J$ and
$\pi(a_n)\neq 0$, $a_n$ and $b_k$ are linearly independent. Similarly, since
$\pi(b_{min}+a_1+\ldots+a_{n-1})\neq 0$, $b_{min}+a_1+\ldots+a_{n-1}$ and $b_k$ are also linearly independent.
Then we can apply~\cite[Lemma 1]{PS}, which tells that
at least one of the expressions $(b_{min}+a_1+\ldots+a_{n-1})-b_k$ and
$a_n-b_k$ is a root too. In the first case, applying the same lemma several more times, we eventually
conclude that $a_i-b_k$ is a root for some $1\le i\le n-1$, since $b_{min}-b_k$ is not a root for any
positive simple root $b_k\in\Pi\setminus J$ by the $\Pi$-minimality of $b_{min}$ in $\pi^{-1}(\beta)$. Summing up, we see that $a_i-b_k$ is a root for some $1\le i\le n$.
Therefore,
$$
a+b_1+\ldots+b_{k-1}=b_{min}+a'_1+\ldots+a'_n,
$$
where $a'_i=a_i-b_k\in\pi^{-1}(\pi(a_i))$
and $a'_j=a_j$ for all $j\neq i$. Note that $b_{min}+a'_1+\ldots+a'_l$ is still a root for any $1\le l\le n$
by the choice of the index $i$.
\end{proof}

\subsection{Relative roots and relative root subschemes}\label{ssec:rel}

Let $G$ be a reductive group scheme over a commutative ring $R$.
By~\cite[Exp. XXII, Prop. 2.8]{SGA3} the root system $\Phi$ of $G_{\overline{k(s)}}$, $s\in\Spec R$,
is constant locally in the Zariski topology on $\Spec R$.
Let $P$ be a parabolic subgroup scheme of $G$ over $R$, and let $L$ be a Levi subgroup of $P$.
The type of the root system of
$L_{\overline{k(s)}}$ is determined by a Dynkin subdiagram
of the Dynkin diagram of $\Phi$, which is also constant Zariski-locally on $\Spec R$
by~\cite[Exp. XXVI, Lemme 1.14 and Prop. 1.15]{SGA3}. In particular, if $\Spec R$ is connected,
all these data are constant on $\Spec R$.

We denote by
$\ad_G:G\to G^{\ad}$ the canonical homomorphism of $G$ onto the corresponding adjoint group $G^{\ad}=G/\Cent(G)$.
We consider $G^{\ad}$ as a subgroup scheme of the automorphism group scheme $\Aut(G)$ in the sense of~\cite[Exp. XXIV]{SGA3}.

\begin{lem}\label{lem:relroots}\cite[Lemma 3.6]{St-serr}
Let $G$ be a reductive group over a connected commutative ring $R$.
Let $D$ be the Dynkin diagram of the root system $\Phi$ of $G_{\overline{k(s)}}$ for any $s\in\Spec R$. There is a subgroup
$\Gamma\le\Aut(D)$ such that for any parabolic $R$-subgroup $P$ of $G$ with a Levi subgroup $L$, there exist a unique
$\Gamma$-invariant
subset $J$ of $D$ and a unique maximal split $R$-subtorus
$S\subseteq\Cent(\ad_G(L))$ such that
for any $s\in\Spec R$ and any split maximal torus $T\subseteq\ad_G(L)_{\overline{k(s)}}$,
one can find  a bijection of $D$ onto a system of simple roots of $\Phi(T,G_{\overline{k(s)}})\cong\Phi$ satisfying

(i)  $L_{\overline{k(s)}}=U_{\ZZ(D\setminus J)}$ and $P_{\overline{k(s)}}=U_{\ZZ(D\setminus J)\cup \NN J}$;

(ii) the kernel of the surjective restriction homomorphism
\begin{equation}\label{eq:T-S}
X^*(T)\cong\ZZ\Phi\xrightarrow{\ \pi_P\ } X^*(S_{\overline{k(s)}})\cong \ZZ\Phi(S,G)
\end{equation}
is generated by all roots $r\in D\setminus J$,
and by all differences $r-\sigma(r)$, $r\in J$, $\sigma\in\Gamma$.
\end{lem}

\begin{dfn}
In the setting of Lemma~\ref{lem:relroots} we call $\Phi(S,G)=\Phi_{J,\Gamma}$ a \emph{system
of relative roots with respect to the parabolic subgroup $P$ over $R$} and denote it by $\Phi_P$.
\end{dfn}

If $A$ is a field or a local ring, and $P$ is a minimal parabolic subgroup of $G$,
then $\Phi_P$ is nothing but the root system of $G$ with respect to a maximal split subtorus
in the sense of~\cite{BorelTits} or, respectively,~\cite[Exp. XXVI \S 7]{SGA3}.

\begin{rem}
The system of relative roots $\Phi_P$ with respect to a parabolic subgroup $P$ was first introduced in~\cite{PS}, however,
the construction used there implicitly required the base ring to be noetherian. Every noetherian ring $R$ admits
a decomposition $R=\prod\limits_{i=1}^mR_i$,
where each $R_i$ is connected. The construction of $\Phi_P$ given in~\cite[Lemma 3.6]{St-serr} naturally coincides with the
construction of~\cite{PS} over every $R_i$.

\end{rem}

For any finitely generated projective $R$-module $V$, we denote by $W(V)$ the affine $R$-scheme
$\Spec\Sym^*(V^\vee)$, where
$V^\vee$ denotes the dual $R$-module of $V$, and $\Sym^*$ denotes the symmetric algebra.
Any morphism of $R$-schemes $W(V_1)\to W(V_2)$
is determined by an element $f\in\Sym^*(V_1^\vee)\otimes_R V_2$. If $f\in\Sym^d(V_1^\vee)\otimes_R V_2$,
we say that the corresponding morphism is
homogeneous of degree $d$.
By abuse of notation, we also write $f:V_1\to V_2$ and call it {\it a degree $d$
homogeneous polynomial map from $V_1$ to $V_2$}. Such a map $f$ satisfies
$$
f(\lambda v)=\lambda^d f(v)
$$
for any $v\in V_1$ and $\lambda\in R$.

\begin{lem}\cite[Lemma 3.9]{St-serr}\label{lem:relschemes}
In the setting of Lemma~\ref{lem:relroots}, for any $\alpha\in\Phi_P=\Phi(S,G)$ there exists a closed
$S$-equivariant  embedding of $R$-schemes
$$
X_\alpha\colon W\bigl(\Lie(G)_\alpha\bigr)\to G
$$
satisfying the following condition. Let $R'/R$ be any ring extension such that $G_{R'}$ is split
with respect to a maximal split $R'$-torus $T\subseteq L_{R'}$. Let $\Phi=\Phi(T,G_{R'})$ be the corresponding
root system, let
$$
\pi_P:X^*(T)\cong\ZZ\Phi\to X^*(S_{R'})\cong\ZZ\Phi_P
$$
be the restriction homomorphism,
and let
$x_r\colon\Ga_{,R'}\to G_{R'}$ be the 1-parameter root subgroups of $G_{R'}$ corresponding to a Chevalley system
$e_r\in\Lie(G_{R'})_r$, $r\in\Phi$, in the sense
of~\cite[Exp. XXIII, D\'ef. 6.1]{SGA3}.
Then for any
$u=\hspace{-8pt}\sum\limits_{r\in\pi_P^{-1}(\alpha)}\hspace{-8pt}a_r e_r\in\Lie(G_{R'})_\alpha$
one has
\begin{equation}\label{eq:Xalpha-prod}
X_\alpha(u)=
\Bigl(\prod\limits_{r\in\pi_P^{-1}(\alpha)}\hspace{-8pt}x_{r}(a_r)\Bigr)\cdot
\prod\limits_{i\ge 2}\Bigl(\prod\limits_{
\st
r\in \pi_P^{-1}(i\alpha)
}
\hspace{-8pt}x_r(p^i_{r}(u))\Bigr),
\end{equation}
where each $p^i_{r}:\Lie(G_{R'})_\alpha\to R'$, $r\in\pi_P^{-1}(i\alpha)$, is a homogeneous polynomial map of degree $i$,
and the products over $r$ are taken in any prescribed order.

\end{lem}

\begin{dfn}
Closed embeddings $X_\alpha$, $\alpha\in\Phi_P$, satisfying the statement of Lemma~\ref{lem:relschemes},
are called \emph{relative root subschemes of $G$ with respect to the parabolic subgroup $P$}.
\end{dfn}

Relative root subschemes of $G$ with respect to $P$ a priori
depend on the choice of a Levi subgroup $L$ in $P$, but since all Levi subgroups are
conjugate under $U_P(R)$~\cite[Exp. XXVI Cor. 1.8]{SGA3}, our proofs do not depend on this choice,
so we usually omit $L$ from the notation.

We will use the following properties of relative root subschemes.

\begin{lem}\label{lem:rootels}\cite[Theorem 2, Lemma 6, Lemma 9]{PS}
Let $X_\alpha$, $\alpha\in\Phi_P$, be as in Lemma~\ref{lem:relschemes}.
Set $V_\alpha=\Lie(G)_\alpha$ for short. Then

(i) There exist degree $i$ homogeneous polynomial maps $q^i_\alpha:V_\alpha\oplus V_\alpha\to V_{i\alpha}$, i>1,
such that for any $R$-algebra $R'$ and for any
$v,w\in V_\alpha\otimes_R R'$ one has
\begin{equation}\label{eq:sum}
X_\alpha(v)X_\alpha(w)=X_\alpha(v+w)\prod_{i>1}X_{i\alpha}\left(q^i_\alpha(v,w)\right).
\end{equation}

(ii) For any $g\in L(R)$, there exist degree $i$ homogeneous polynomial maps
$\varphi^i_{g,\alpha}\colon V_\alpha\to V_{i\alpha}$, $i\ge 1$, such that for any $R$-algebra $R'$ and for any
$v\in V_\alpha\otimes_R R'$ one has
$$
gX_\alpha(v)g^{-1}=\prod_{i\ge 1}X_{i\alpha}\left(\varphi^i_{g,\alpha}(v)\right).
$$

(iii) \emph{(generalized Chevalley commutator formula)} For any $\alpha,\beta\in\Phi_P$
such that $m\alpha\neq -k\beta$ for all $m,k\ge 1$,
there exist polynomial maps
$$
N_{\alpha\beta ij}\colon V_\alpha\times V_\beta\to V_{i\alpha+j\beta},\ i,j>0,
$$
homogeneous of degree $i$ in the first variable and of degree $j$ in the second
variable, such that for any $R$-algebra $R'$ and for any
for any $u\in V_\alpha\otimes_R R'$, $v\in V_\beta\otimes_R R'$ one has
\begin{equation}\label{eq:Chev}
[X_\alpha(u),X_\beta(v)]=\prod_{i,j>0}X_{i\alpha+j\beta}\bigl(N_{\alpha\beta ij}(u,v)\bigr)
\end{equation}

(iv) For any subset $\Psi\subseteq X^*(S)\setminus\{0\}$ that is closed under addition,
the morphism
$$
X_\Psi\colon W\Bigl(\bigoplus_{\st \alpha\in\Psi\cap\Phi_P}\!\!V_\alpha\Bigr)\to U_\Psi,\qquad
(v_\alpha)_\alpha\mapsto\prod_\alpha X_\alpha(v_\alpha),
$$
where the product is taken in any fixed order,
is an isomorphism of $R$-schemes.
\end{lem}

Note that Lemma~\ref{lem:rootels} (iv) implies, in particular, that
for any $R$-algebra $R'$, one has
$$
U_{P^\pm}(R')=\left<X_\alpha(R'\otimes_R V_\alpha),\ \alpha\in\Phi_P^\pm\right>.
$$
Consequently,
$$
E_P(R')=\left<X_\alpha(R'\otimes_R V_\alpha),\ \alpha\in\Phi_P\right>.
$$
For any $\alpha\in\Phi_P$, set $(\alpha)=\NN\alpha\subseteq\ZZ\Phi_P$.
Then $U_{(\alpha)}=\prod\limits_{k\ge 1}X_{k\alpha}$ is a closed subscheme of $G$
satisfying
$$
U_{(\alpha)}(R')=\l<X_{k\alpha}(V_{k\alpha}\otimes_R R'),\ k\ge 1\r>
$$
for any $R$-algebra $R'$. This notation
coincides with that of~\cite{BorelTits} in case of isotropic reductive groups over a field.

\begin{lem}\label{lem:PQ-relroots}
Under the assumptions of Lemma~\ref{lem:relroots}, let $R'$ be a connected commutative $R$-algebra.
Let $Q\le P_{R'}$ be a parabolic $R'$-subgroup of $G_{R'}$, let $L_Q\le (L_P)_{R'}$ be a Levi subgroup of $Q$,
and let $J'$, $\Gamma'$, and $S'$ be the subset of $D$, the subgroup of
$\Aut(D)$, and the maximal split subtorus of $\Cent(\ad_G(L_Q))$
corresponding to $Q$. Then

(i) $J\subseteq J'$, $\Gamma'\le\Gamma$, $S_{R'}\le S'$, and the restriction of characters $\pi_{PQ}:X^*(S')\to X^*(S_{R'})$
completes the commutative triange
\begin{equation*}
\xymatrix@R=20pt@C=35pt{
\ZZ\Phi\ar[rd]_{\pi_P}\ar[r]^{\pi_{Q}\qquad\quad}&\ZZ\Phi_{Q}\cong\ZZ\Phi_{J',\Gamma'}\ar[d]^{\pi_{PQ}}\\
&\ZZ\Phi_P\cong\ZZ\Phi_{J,\Gamma}\\
}
\end{equation*}
(ii) for any $\alpha\in\Phi_P$ and any vector
$$
v=\sum\limits_{\hbox to 9pt{$\st \delta\in\pi_{PQ}^{-1}(\alpha)$}}v_\delta\,\in\, V_\alpha\otimes_R R'=
\Lie(G_{R'})_\alpha=\bigoplus\limits_{\hbox to 15pt{$\st \delta\in\pi_{PQ}^{-1}(\alpha)$}} V_\delta
$$
one has
$$
X_\alpha(v)=\prod\limits_{\hbox to 9pt{$\st \delta\in\pi_{PQ}^{-1}(\alpha)$}} X_\delta(v_\delta)\cdot\prod_{i\ge 2}X_{i\alpha}(f^i_\alpha(v)),
$$
where each $f^i_\alpha:V_\alpha\to V_{i\alpha}$ is a polynomial map of degree $i$.
\end{lem}
\begin{proof}
The claim (i) is clear except possibly for the inclusion $\Gamma'\le\Gamma$. For the latter,
recall~\cite[Lemma 3.6]{St-serr} that $\Gamma\le\Aut(D)$ represents the action of the Galois
group $\Gal(\tilde R/R)$ on $D$, where $\tilde R/R$ is a finite Galois ring extension that splits the Dynkin scheme
$\Dyn(G)$ of~~\cite[Exp. XXIV, \S 3.7]{SGA3}. The group $\Gamma'\le\Aut(D)$ represents the corresponding Galois group
for $\Dyn(G_{R'})=\Dyn(G)\times_R R'$. Since $\tilde R\otimes_R R'/R'$ is a finite Galois extension that splits $\Dyn(G_{R'})$,
the claim follows.

To prove (ii), first reduce to $v=v_\delta$ by Lemma~\ref{lem:rootels} (i).
Since $\Psi=\pi_{PQ}^{-1}(\NN\alpha)$ is an additively closed subset of $\ZZ\Phi_Q\setminus\{0\}$,
by Lemma~\ref{lem:rootels} (iv) one has $X_{\delta}(v_\delta)\in U_{\Psi}(R')$. Since
$\Lie(U_\Psi)=\Lie(U_{\NN\alpha}\times_R R')$, we have $U_\Psi=U_{\NN\alpha}\times_R R'$ by Lemma~\ref{lem:T-P}. Hence
$X_\delta(v_\delta)=\prod_{i\in\NN}X_{i\alpha}(u_i)$, $u_i\in V_{i\alpha}\otimes_R R'$.
Locally in the fpqc-topology on $\Spec(R')$, $G$ is split with respect
to a split maximal torus contained in $L$~\cite[Exp. XXII Cor. 2.3]{SGA3}, and one can choose a Chevalley basis of $\Lie(G)$
adapted to $Q$ and $L_Q$~\cite[Exp. XXVI Lemme 1.14]{SGA3}. Since $Q\le P_{R'}$ and $L_Q\le (L_P)_{R'}$, it is also
adapted to $P_{R'}$ and $(L_P)_{R'}$. Thus, fpqc-locally there are
presentations~\eqref{eq:Xalpha-prod} of Lemma~\ref{lem:relschemes} for $X_\delta(v_\delta)$ and $X_{i\alpha}(u_i)$.
Comparing these presentations and applying inverse induction on $i$ concludes the proof.
\end{proof}

Apart from the above properties of relative root subschemes, we will use the following lemma
which is a modification of~\cite[Lemma 10]{PS} and~\cite[Lemma 2]{LS}.
Note that both these earlier versions of the lemma are imprecise in claiming that the set-theoretic image $\im(N_{\alpha,\beta,1,1})$
(respectively, the sum of images in (2)) coincides with $V_{\alpha+\beta}$, while it only additively generates
$V_{\alpha+\beta}$. This correction does not affect the validity of any proofs in~\cite{PS,LS,St-poly}.

\begin{dfn}
Let $\Psi$ be a reduced irreducible root system in the sense of~\cite{Bu}. The \emph{structure constants} of $\Psi$
are the structure constants of the simple complex Lie algebra of type $\Psi$, i.e. the integers $\{\pm 1\}$
if $\Psi=A_l,D_l,E_6,E_7,E_8$, the integers $\{\pm 1,\pm 2\}$ if $\Psi=B_l,C_l,F_4$, and $\{\pm 1,\pm 2,\pm 3\}$
if $\Psi=G_2$.
\end{dfn}

\begin{lem}\label{lem:const}
Consider $\alpha,\beta\in\Phi_P$ satisfying $\alpha+\beta\in\Phi_P$ and $m\alpha\neq -k\beta$ for any $m,k\ge 1$.
Denote by $\Phi^0$ an irreducible component of $\Phi$ such that $\alpha,\beta\in\pi_P(\Phi^0)$.

{\rm (1)} In each of the following cases,
$\im( N_{\alpha\beta 11})$ generates $V_{\alpha+\beta}$ as an abelian group:

\quad {\rm (a)} structure constants of $\Phi^0$ are invertible in $R$ (for example, $\Phi^0$ is simply laced);

\quad {\rm (b)} $\alpha\neq \beta$ and $\alpha-\beta\not\in\Phi_P$;

\quad {\rm (c)} $\Phi^0$ is of type $B_l$, $C_l$, or $F_4$,
and $\pi_P^{-1}(\alpha+\beta)$ consists of short roots;

\quad {\rm (d)} $\Phi^0$ is of type $B_l$, $C_l$, or $F_4$, and there exist long roots
$a\in\pi_P^{-1}(\alpha)$, $b\in\pi_P^{-1}(\beta)$ such that $a+b$ is a root.

Moreover, in each of the cases {\rm (a)}, {\rm (b)}, or {\rm (c)},
for any $R$-algebra $R'$ and $0\neq u\in V_\beta\otimes_R R'$ one has $\im (N_{\alpha\beta 11}(-,u))\neq 0$.

{\rm (2)} If $\alpha-\beta\in\Phi_P$ and $\Phi^0\neq G_2$, then\;
$\im(N_{\alpha\beta 11})$,\, $\im( N_{\alpha-\beta,2\beta,1,1})$, and\,
$\im (N_{\alpha-\beta,\beta,1,2})$ together
generate $V_{\alpha+\beta}$ as an abelian group. Here we assume
$\im(N_{\alpha-\beta,2\beta,1,1})=0$ if $2\beta\not\in\Phi_P$.
\end{lem}
\begin{proof}
{\rm (1)} Since $N_{\alpha\beta11}$ is bilinear, in order to prove that its image generates $V_{\alpha+\beta}$ as an abelian
group, it is enough to show that the $R$-submodule generated by $\im( N_{\alpha\beta 11})$ coincides with $V_{\alpha+\beta}$.
The latter can be proved locally in the fpqc-topology on $\Spec R$. The second claim also can be checked fpqc-locally,
since it is equivalent to $\im(N_{\alpha\beta11}(-,u))\neq 0$.  Locally in the fpqc-topology, $G$ is split with respect
to a split maximal torus contained in $L$~\cite[Exp. XXII Cor. 2.3]{SGA3}, and one can choose a Chevalley basis of $\Lie(G)$
adapted to $P$ and $L$~\cite[Exp. XXVI Lemme 1.14]{SGA3}. To simplify the notation, assume that this is the case already
over $R$; then we have the presentation~\eqref{eq:Xalpha-prod} of Lemma~\ref{lem:relschemes}.

The $R$-module $V_{\alpha+\beta}$ is generated by vectors $e_c$, $c\in\pi_P^{-1}(\alpha+\beta)$.
By~\cite[Lemma 4]{PS} for every $c\in\pi_P^{-1}(\alpha+\beta)$ there are $a\in\pi_P^{-1}(\alpha)$ and $b\in\pi_P^{-1}(\beta)$
such that $c=a+b$. Let $u_a\in V_\alpha$, $u_b\in V_\beta$ be such that
$$
X_\alpha(u_a)=x_a(1)\cdot\prod\limits_{i\ge 2}\prod\limits_{r\in\pi_P^{-1}(i\alpha)}x_r(p_r^i(u_a))
\quad\mbox{and}\quad
X_\beta(u_b)=x_b(1)\cdot \prod\limits_{i\ge 2}\prod\limits_{r\in\pi_P^{-1}(i\beta)}x_r(p_r^i(u_b)).
$$
Then the (usual) Chevalley commutator formula implies that $N_{\alpha\beta 11}(u_a,u_b)=\lambda e_{a+b}$,
where $\lambda\in\{\pm 1,\pm 2,\pm 3\}$ is a structure constant
of $\Phi$.  Now if the assumption (a) holds, then $\lambda$ is invertible. If
(b), (c) or (d) holds, then $\lambda$ necessarily equals $\pm 1$. Indeed, in the
only dubious case (d) one should
note that, due to the transitive action of the Weyl group of $L$ on the roots of the same length in $\pi_P^{-1}(\alpha+\beta)$
(see~\cite[Lemma 1]{ABS}), {\it any} long root $c\in\pi_P^{-1}(\alpha+\beta)$
decomposes as a sum of long roots $a$ and $b$. Hence $\lambda$ is always invertible.
This shows that $\im(N_{\alpha\beta 11})$ generates $V_{\alpha+\beta}$
as an $R$-module.

To prove the second claim of {\rm (1)}, for a given $u\in V_\beta\otimes_R R'$ write
\begin{equation*}
X_\beta(u)=\prod_{\hbox to 15pt{$\st b\in\pi_P^{-1}(\beta)$}}x_{b}(\nu_b)\cdot
\prod_{i\ge 2}\prod_{r\in\pi_P^{-1}(i\beta)}x_r(p_r^i(u)),\quad \nu_b\in R'.
\end{equation*}
Since $X_\beta(u)\neq 0$, by Lemma~\ref{lem:rootels} (iv) there exists $\nu_b\neq 0$.
By~\cite[Lemma 4]{PS} there exists
a root $a\in\pi_P^{-1}(\alpha)$ such that $a+b\in\Phi$. Let $v\in V_\alpha$ be such that
\begin{equation*}
X_\alpha(v)=x_a(1)\cdot\prod\limits_{i\ge 2}\prod\limits_{r\in\pi_P^{-1}(i\alpha)}x_r(p_r^i(v)).
\end{equation*}
Then $N_{\alpha\beta 11}(v,u)$ is a sum of $\lambda e_{a+b}$ and a linear combination of basis vectors of $V_{\alpha+\beta}$
 of the form $e_{a+b'}$, $b'\in\pi_P^{-1}(\beta)$,
$a+b'\in\Phi$. By the same token as above $\lambda$ is invertible, and hence $N_{\alpha\beta 11}(v,u)\neq 0$.

{\rm (2)} The proof of~\cite[Lemma 10 2)]{PS} carries over verbatim.
\end{proof}

\begin{dfn}\label{def:E(R,I)}
For any $R$-algebra $R'$, any ideal $I$ of $R'$, and any $\alpha\in\Phi_P$ we denote
$$
\begin{array}{l}
G(R',I)=\ker \bigl(G(R')\to G(R'/I)\bigr),\\
U_{(\alpha)}(I)=\l<X_{k\alpha}(IV_\alpha),\ k\in\NN\r>=U_{(\alpha)}(R')\cap G(R',I),\\
E_P(I)=\l<X_\alpha(IV_\alpha),\ \alpha\in\Phi_P\r>,\\
E_P(R',I)=E_P(I)^{E_P(R')}=\mbox{ the normal closure of $E_P(I)$ in }E_P(R'),\\
E_\alpha(I)=\left<U_{(\alpha)}(I),\,U_{(-\alpha)}(I)\right>.
\end{array}
$$
\end{dfn}

\section{Steinberg groups associated to parabolic subgroups}\label{sec:K2}

\subsection{Basic properties of Steinberg groups}\label{sec:St}
Let $R$ be a connected commutative ring,
let $G$ be an isotropic reductive group over $R$, $P,P^-\subseteq G$ two opposite parabolic subgroups of $G$,
$L_P=P\cap P^-$ their common Levi subgroup. By Lemmas~\ref{lem:relroots} and~\ref{lem:relschemes} we are given a system
of relative roots $\Phi_P$ and relative root suschemes $X_\alpha(V_\alpha)$, $\alpha\in\Phi_P$, corresponding to
$P$.

\begin{dfn}\label{dfn:St}
For any commutative $R$-algebra $R'$, the \emph{Steinberg group} associated to $G$ and $P$ over $R'$ is the group
$St_P(R')$ generated as an
abstract group by elements $\tilde X_\alpha(u)$, $\alpha\in\Phi_P$, $u\in V_\alpha\otimes_R R'$,
subject to the relations
\begin{equation}\label{eq:sum-St}
\begin{array}{c}
\tilde X_\alpha(v)\tilde X_\alpha(w)=\tilde X_\alpha(v+w)\prod\limits_{i>1}\tilde X_{i\alpha}\l(q^i_\alpha(v,w)\r)\ \mbox{for all}
\ \alpha\in\Phi_P,\ v,w\in V_\alpha\otimes_R R',
\end{array}
\end{equation}
and
\begin{equation}\label{eq:Chev-St}
\begin{array}{c}
[\tilde X_\alpha(u),\tilde X_\beta(v)]=\prod\limits_{i,j>0}\tilde X_{i\alpha+j\beta}\bigl(N_{\alpha\beta ij}(u,v)\bigr),
\quad\mbox{for all}\ \alpha,\beta\in\Phi_P\ \mbox{such that}\\
m\alpha\neq-k\beta\ \mbox{for any}\ m,k>0,\ \mbox{and all }\ u\in V_\alpha\otimes_R R',\ v\in V_\beta\otimes_R R'.
\end{array}
\end{equation}
\end{dfn}

\begin{rem}
If $G$ is a Chevalley group of rank $\ge 2$ over $R$ and $P$ is a Borel subgroup, then our definition coincides with the standard definition of
the Steinberg group $St(\Phi,R)$ corresponding to $G$~\cite{Steinberg,Steinberg-book}. If $R$ is a field, $P$ is a minimal parabolic subgroup
and $\rank\Phi_P\ge 2$, it also coincides with the definition of V. Deodhar~\cite[1.9]{Deo}.
On the other hand, if $\rank(\Phi_P)=1$, then our definition does not include any relations of the commutator
type, so it is not the "right"{} definition. However, it is convenient for the purposes of the present paper.
\end{rem}

It is clear that $St_P(-)$ is a functor on the category of commutative $R$-algebras $R'$.
We denote by
$$
\pst_P=\pst_P(R'): St_P(R')\to E_P(R')
$$
the natural functorial surjection.

For any subgroup of $E_P(R)$ generated by a set of relative root elements $X_\alpha(v)$, we will
denote the subgroup of $St_P(R)$ generated by the respective liftings $\tilde X_\alpha(v)$ by the same
letter or combination of letters, but with $\tilde{}$ on top. In particular,
for any set $S\subseteq\ZZ\Phi_P$ and any ideal $I\subseteq R$ we write
$$
\begin{array}{c}
\tilde U_S(I)=\l<\tilde X_\alpha(IV_\alpha),\ \alpha\in S\r>,\\
\tilde E_\alpha(R)=\tilde U_{\ZZ\alpha}(R)=\l<\tilde U_{(\alpha)}(R),\tilde U_{(-\alpha)}(R)\r>.\\
\end{array}
$$

\begin{lem}\label{lem:mono}
Let $\Psi\subseteq \ZZ\Phi_P\setminus 0$ be an additively closed subset.
Then $\pst_P|_{\tilde U_{\Psi}(R)}:\tilde U_{\Psi}(R)\to U_{\Psi}(R)$ is a group isomorphism.
\end{lem}
\begin{proof}
Since $0\not\in\Psi$, there is a partial
order $\le$ on the finite set $S=\Psi\cap\Phi_P$ generated by $\alpha<\alpha+\beta$ for any $\alpha,\beta\in S$.
The relation~\eqref{eq:Chev-St}
then says that for any $\alpha,\beta\in S$ the commutator $[X_\alpha(u),X_\beta(v)]$ is a product of
elements in $S$ strictly bigger than $\alpha$ and $\beta$. This shows that
if $\alpha_1,\alpha_2,\ldots,\alpha_n$ are all elements in $S$ in any order,
then any $x\in \tilde U_{\Psi}(R)$ can be written
(possibly, non-uniquely) as $x=\prod\limits_{i=1}^n\tilde X_{\alpha_i}(u_i)$ for some  $u_i\in V_{\alpha_i}$.
By Lemma~\ref{lem:rootels} (iv) the corresponding
presentation for $\pst_P(x)\in U_{\Psi}(R)$ is unique, hence $\pst_P(x)=1$ implies
$\pst_P(\tilde X_{\alpha_i}(u_i))=X_\alpha(u_i)=1$ for all $i$. Hence $u_i=0$ for all $i$.
\end{proof}

Note that Lemma~\ref{lem:mono} applied to $\Psi=\Phi_P^\pm$ implies that the two subgroups
 $\tilde U_{\Phi_{P}^\pm}(R)$ are isomorphic to $U_{P^\pm}(R)$ via $\pst_P$; clearly, these
subgroups generate $St_P(R)$.

\begin{lem}\label{lem:phiPQ}
Let $Q\le P$ be two parabolic subgroups of $G$ over $R$, with Levi subgroups $L_Q\le L_P$ and opposite parabolic subgroups
$Q^-\le P^-$.
For any commutative $R$-algebra $R'$ the inclusions $U_{P^\pm}(R')\le U_{Q^\pm}(R')$ induce a natural group
homomorphism
$$
\phi_{PQ}=\phi_{PQ}(R'):St_P(R')\to St_Q(R')
$$
such that $\phi_{PQ}(\tilde X_\alpha(v))=\bigl(\pst_Q|_{\tilde U_{\Phi_Q^\pm}(R')}\bigr)^{-1}(X_\alpha(v))$ for all $\alpha\in\Phi_P^\pm$,
$v\in V_\alpha\otimes_R R'$.
\end{lem}
\begin{proof}
For any additively closed subset $\Psi\subseteq\ZZ\Phi_P\setminus\{0\}$ the set
$\pi_{PQ}^{-1}(\Psi)\subseteq\ZZ\Phi_Q\setminus\{0\}$ is also additively closed. By the unicity part of Lemma~\ref{lem:T-P}
together with Lemma~\ref{lem:rootels} (iv) we have
$U_{\Psi}=U_{\pi_{PQ}^{-1}(\Psi)}$ inside $G$. Then by Lemma~\ref{lem:mono} the groups $\tilde U_\Psi(R)\le St_P(R)$
and $\tilde U_{\pi_{PQ}^{-1}(\Psi)}(R)\le St_Q(R)$ are isomorphic by means of two mutually inverse isomorphisms
$\pst_P^{-1}\circ\pst_Q$ and $\pst_Q^{-1}\circ\pst_P$. Taking $\Psi=\NN\alpha$ for the relation~\eqref{eq:sum-St}
and
$$
\Psi=(\NN\alpha+\NN\beta)\cup\{\alpha,\beta\}
$$
for the relation~\eqref{eq:Chev-St}, we conclude that
the elements
$$
\phi_{PQ}(\tilde X_\alpha(u))=\bigl(\pst_Q|_{\tilde U_{\pi_{PQ}^{-1}(\Psi)}(R)}\bigr)^{-1}(X_\alpha(u))\in St_Q(R),\quad
\alpha\in\Phi_P,\ u\in V_\alpha,
$$
satisfy the same relations as the original elements of $St_P(R)$. Thus, there is a correctly defined homomorphism
$\phi_{PQ}:St_P(R)\to St_Q(R)$ such that $\pst_Q\circ\phi_{PQ}=\pst_P$.
\end{proof}

The following lemma slightly extends~\cite[Lemma 11]{PS}.

\begin{lem}\label{lem:lemma11}
Let $R[Y,Z]$ be a polynomial ring in two variables over $R$.
Suppose that $\alpha\in\Phi_P$ lies in an irreducible component of rank $\ \ge 2$.
Then there exist relative roots
$\beta_i,\gamma_i\in\Phi_P$, non-collinear to $\alpha$,
and integers $k_i,l_i>0$, $n_i\ge 0$ {\rm ( $1\le i\le m$ )},
such that for any $v\in V_\alpha$ there are $v_i\in V_{\beta_i}$ and  $u_i\in V_{\gamma_i}$ {\rm ( $1\le i\le m$ )}
satisfying 
\begin{equation}\label{eq:lemma11}
\tilde X_\alpha(YZ^2 v)=\prod\limits_{i=1}^m \tilde X_{\beta_i}(Y^{k_i}Z^{n_i} v_{i})^{\ds \tilde X_{\gamma_i}(Z^{l_i} u_{i})}
\end{equation}
in $St_P(R[Y,Z])$. If every module $V_\beta$, $\beta\in\Phi_P$, is generated by $\le M$ elements,
then there is a presentation~\eqref{eq:lemma11} with $m\le 4m_\alpha^2\cdot M^3\cdot|\Phi_P|$.
\end{lem}
\begin{proof}
We assume that $\alpha\in\Phi_P^+$ and prove the claim
by inverse induction on the height of $\alpha$.
By~\cite[Lemma 5]{PS} there are non-collinear relative roots $\beta,\gamma\in\Phi_P$ such that
$\alpha=\beta+\gamma$, and moreover $\beta-\gamma\not\in\Phi_P$ if $\alpha$ belongs to the image of an irreducible
component of $\Phi$ of type $G_2$.
Then by Lemma~\ref{lem:const} the vector $v$ is a finite sum of elements of
$\im(N_{\beta\gamma 11})$, together with elements of $\im (N_{\beta-\gamma,\gamma,1,2})$, if $\beta-\gamma\in\Phi_P$,
and elements in $\im( N_{\beta-\gamma,2\gamma,1,1})$, if $2\gamma\in\Phi_P$.  Since $N_{\beta\gamma 11}$ and
$N_{\beta-\gamma,2\gamma,1,1}$ are bilinear, and $N_{\beta-\gamma,\gamma,1,2}$ is $(1,2)$-homogeneous,
 $v$ is a sum of no more than
$$
\mu(V_\beta)\cdot\mu(V_\gamma)+\mu(V_{\beta-\gamma})\cdot\bigl(\mu(V_{2\gamma})+
\mu(V_\gamma)+\mu(V_\gamma)^2\bigr)\le 4M^3
$$
elements of the respective images, where $\mu(V)$ denotes the minimum number of generators of $V$.
Write $v=u+v'$, where $u\in V_\alpha$ belongs
to $\im(N_{\beta\gamma 11})$, $\im (N_{\beta-\gamma,\gamma,1,2})$, or $\im( N_{\beta-\gamma,2\gamma,1,1})$.
By~\eqref{eq:sum-St} one has
\begin{equation}\label{eq:lemma11-sum}
\tilde X_\alpha(YZ^2v)\in \tilde X_\alpha(YZ^2u)\cdot \tilde X_\alpha(YZ^2v')\cdot U,
\end{equation}
where $U=\prod\limits_{i=2}^{m_\alpha}\tilde X_{i\alpha}(Y^iZ^{2i}V_{i\alpha})$ is a subgroup of $\tilde U_{(\alpha)}(R[Y,Z])$.
By the relation~\eqref{eq:Chev-St} $\tilde X_\alpha(YZ^2u)$
is a product \hbox{of $\le|\Phi_P|$ factors} of the form $\tilde X_{\beta_i}(Y^{k_i}Z^{n_i} v_{i})^{\ds \tilde X_{\gamma_i}(Z^{l_i} u_{i})}$
as in~\eqref{eq:lemma11} (where $\beta_i$ and $\gamma_i$ are pairwise distinct roots of the form
$\lambda\beta+\mu\gamma$ with $\lambda\in\NN$, $\mu\in\ZZ$,
$\lambda\neq\mu$), and of an element lying in $U$.
By~\eqref{eq:Chev-St} $\tilde X_\alpha(YZ^2v')$ normalizes $U$.
Hence, iterating the decomposition~\eqref{eq:lemma11-sum} for $v'$, we conclude that $\tilde X_\alpha(YZ^2v)$ is a product
of $\le 4M^3\cdot|\Phi_P|$ factors of the required form
$\tilde X_{\beta_i}(Y^{k_i}Z^{n_i} v_{i})^{\ds \tilde X_{\gamma_i}(Z^{l_i} u_{i})}$, and of an element in $U$.
By the induction hypothesis,
any element of $\tilde X_{i\alpha}(Y^iZ^{2i}V_{i\alpha})$, $2\le i\le m_\alpha$, admits a decomposition as in~\eqref{eq:lemma11} with
$\le 4m_{i\alpha}^2\cdot M^3\cdot|\Phi_P|$ factors. Since $m_{i\alpha}\le \frac { m_\alpha}i$,
the total number of factors for $\tilde X_\alpha(YZ^2v)$ is no more than
$$
\Bigl(1+\sum_{i=2}^{m_\alpha}\left(\frac {m_\alpha}i\right)^2\Bigr)\cdot 4M^3\cdot|\Phi_P|=
\Bigl(1+m_\alpha^2\sum_{i=2}^{m_\alpha}\frac 1{i^2}\Bigr)\cdot 4M^3\cdot|\Phi_P|\le 4 m_\alpha^2\cdot M^3\cdot|\Phi_P|.
$$
\end{proof}

The following lemma is an extension of~\cite[Lemma 12]{PS}.

\begin{lem}\label{lem:lemma12}
In the setting of Lemma~\ref{lem:phiPQ},
let $\alpha\in\Phi_Q$ be a root that belongs to an irreducible
component $\Phi^0$ of $\Phi_Q$ such that $\pi_{PQ}(\Phi^0)\neq 0$. Set
$N=\rank\Phi_Q-\rank\Phi_P$. Then

(i) there exist relative roots
$\beta_i,\gamma_{ij}\in\Phi_{P}$,  and integers $k_i,n_i,l_{ij}>0$ {\rm ($1\le i,j\le m$)}
such that for any $v\in V_\alpha$ there are elements $v_i\in V_{\beta_i}$,  $u_{ij}\in V_{\gamma_{ij}}$
satisfying
\begin{equation}\label{eq:X-PQ}
\tilde X_\alpha(YZ^k v)=
\prod\limits_{i=1}^m \phi_{PQ}\bigl(\tilde X_{\beta_i}(Y^{k_i}Z^{n_i} v_{i})\bigr)^{\ds\mbox{$\prod\limits_{j=1}^{m}$}
\phi_{PQ}\bigl(\tilde X_{\gamma_{ij}}(Z^{l_{ij}} u_{ij})\bigr)},
\end{equation}
where $k=3^N$, and if, moreover,
every module $V_\beta$, $\beta\in\Phi_Q$, is generated by $\le M$ elements, then
$m\le |\Phi_Q|^{4N^2}M^{3N}$.

(ii) there exists $\alpha'\in\Phi_P$ such that $\tilde E_\alpha(R)\le \phi_{PQ}(\tilde E_{\alpha'}(R))$.

In particular,
if $\Phi_Q$ is irreducible, then $\phi_{PQ}$ is surjective.
\end{lem}
\begin{proof}

(i)  Let $J'\subseteq J\subseteq\Pi$ be the two sets of simple roots in $\Phi$ corresponding to $P$ and $Q$
respectively. Both sets $J$ and $J'$
are invariant under the group of automorphisms $\Gamma_{\tau}=\Gamma$,
that is, are unions of some $\Gamma$-orbits of simple roots. Then $J\setminus J'$ is a union of
$N=\rank\Phi_Q-\rank\Phi_P$ distinct $\Gamma$-orbits. We prove the claim by induction on $N$. If $N=0$, the claim is clear.
Now if $N\ge 1$, take any $\Gamma$-orbit $O\subseteq J\setminus J'$ and let $\alpha_r\in O$ be a simple root.
By Lemma~\ref{lem:T-P} there exists a parabolic
subgroup $Q\le Q'\le P$ of $G$ such that $\Phi_{Q'}=\Phi_{J\setminus O,\Gamma}$.
By the induction assumption the claim of the lemma is satisfied for the pair $Q'\le P$.
Note that for any $\beta\in\Phi_{Q'}$ one has $V_\beta=\bigoplus_{\gamma\in\pi_{Q'Q}^{-1}(\beta)}V_\gamma$,
hence if all modules $V_\gamma$ are generated by $\le M$ elements, then
$V_\beta$ is generated by $\le |\Phi_Q|\cdot M$ elements.

If $\alpha\in\Phi_Q$ satisfies $\pi_{Q'Q}(\alpha)\neq 0$, then Lemma~\ref{lem:PQ-relroots} together with Lemma~\ref{lem:mono}
imply that
$$
\tilde X_\alpha(Yv)=\prod_{i\in\NN}\tilde X_{i\pi_{Q'Q}(\alpha)}(Y^iv_i)
$$
for some $v_i\in V_{i\pi_{Q'Q}(\alpha)}$. Replacing $Y$ by $YZ^{3^{N-1}}$ and applying the induction assumption
we obtain~\eqref{eq:X-PQ} with $k=3^{N-1}$ and
$$
m\le m_{\pi_{Q'Q}(\alpha)}\cdot |\Phi_Q|^{4(N-1)^2}\cdot (|\Phi_Q|\cdot M)^{3(N-1)}\le M^{3(N-1)}\cdot
|\Phi_Q|^{4(N-1)^2+3(N-1)+1}.
$$
Assume $\pi_{Q'Q}(\alpha)=0$. Then $\alpha\in\ZZ\pi_Q(\alpha_r)\cap\Phi_Q$, and any root $\beta\in\Phi_Q$ that is non-collinear to
$\alpha$ satisfies $\pi_{Q'Q}(\beta)\neq 0$.
Apply to $\tilde X_\alpha$ the decomposition~\eqref{eq:lemma11} of Lemma~\ref{lem:lemma11} replacing
$Y$ by $YZ^{3^{N-1}}$ and $Z$ by $Z^{3^{N-1}}$.
Then by the previous case we obtain a decomposition~\eqref{eq:X-PQ} for $\tilde X_\alpha(YZ^{3^N}v)$ satisfying
$$
\begin{array}{rcl}
m&\le& 4m_\alpha^2\cdot M^3\cdot |\Phi_Q|\cdot M^{3(N-1)}\cdot |\Phi_Q|^{4(N-1)^2+3(N-1)+1}\\
&\le&M^{3N}\cdot |\Phi_Q|^{3+4(N-1)^2+3(N-1)+1}\\
&=&M^{3N}\cdot |\Phi_Q|^{4N^2-5N+5}\le M^{3N}\cdot |\Phi_Q|^{4N^2}.
\end{array}
$$

(ii) If $\pi_{PQ}(\alpha)\neq 0$, then $\tilde E_\alpha(R)\le\phi_{PQ}(\tilde E_{\pi_{PQ}(\alpha)}(R))$ by Lemma~\ref{lem:mono}.
Assume that $\pi_{PQ}(\alpha)=0$, and pick any root $a\in\pi_Q^{-1}(\alpha)$. Let
$S\subseteq J$ be the set of simple roots that occur with non-zero coefficients in the
decomposition of $a$. Then $\pi_{PQ}(\alpha)=0$ implies $S\cap J'=\emptyset$. Since $\alpha$ belongs to an irreducible
component of $\Phi_Q$ whose image under $\phi_{PQ}$ is non-zero, there is a simple root
$b\in J'$ adjacent to a root in $S$, or connected to such root by a chain of simple roots in $\Pi\setminus J$.  Consider the reductive subgroup
$G'=U_{\ZZ \pi_Q(S\cup \{b\})}$ of $G$, and its parabolic subgroups
$$
P'=G'\cap P=U_{\NN\pi_Q(b)\cup\ZZ\pi_Q(S)}\quad\mbox{and}\quad Q'=G'\cap Q=U_{\Psi},
$$ where $\Psi=\ZZ\pi_Q(S\cup\{b\})\cap\Phi_Q^+$. Clearly, we can identify
$\Phi_{P'}$ with $\ZZ\pi_P(b)\cap\Phi_P$ and $\Phi_{Q'}$ with $\ZZ \pi_Q(S\cup \{b\})\cap\Phi_Q$, and
by the very Definition~\ref{dfn:St} we obtain a commutative diagram of Steinberg groups
\begin{equation*}
\xymatrix@R=20pt@C=35pt{
St_{P'}(R)\ar[d]^{\phi_{P'Q'}}\ar[r]^{}&St_{P}(R)\ar[d]^{\phi_{PQ}}\\
St_{Q'}(R)\ar[r]^{}&St_Q(R)\\
}
\end{equation*}
where the horizontal arrows send each $\tilde X_\beta(u)$, where $\beta\in \Phi_{P'}$ or, respectively,
$\beta\in\Phi_{Q'}$, to the corresponding element of the right hand side Steinberg group.
Since $\alpha\in\ZZ S\cap\Phi_Q\subseteq\Phi_{Q'}$, by the surjectivity of $\phi_{P'Q'}$ we conclude that
$\tilde E_\alpha(R)\le\phi_{PQ}\bigl(\tilde E_{\pi_P(b)}(R)\bigr)$.
\end{proof}

For any ideal $I\subseteq R$ we set
$$
\tilde E_P(R,I)=\l<\tilde X_{\alpha}(IV_\alpha),\ \alpha\in\Phi_P\r>^{St_P(R)}.
$$

\begin{lem}\label{lem:st-ker}
For any ideal $I\subseteq R$ there is a short exact sequence
$$
\xymatrix{
1\ar[r]&\tilde E_P(R,I)\ar[r]&St_P(R)\ar[r]^{\tilde\rho}& St_P(R/I)\ar[r]& 1,
}
$$
where $\tilde\rho$ is induced by the residue map $\rho:R\to R/I$.
\end{lem}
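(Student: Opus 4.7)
The plan is to establish exactness by constructing a two-sided inverse to the surjection $\overline{\tilde\rho}:St_P(R)/\tilde E(R,I)\to St_P(R/I)$ induced by $\tilde\rho$. Surjectivity of $\tilde\rho$ is immediate, since every generator $\tilde X_\alpha(\bar v)$ of $St_P(R/I)$ lifts to $\tilde X_\alpha(v)$ for any preimage $v\in V_\alpha$ of $\bar v\in V_\alpha\otimes_R(R/I)$. The inclusion $\tilde E(R,I)\subseteq\ker\tilde\rho$ is equally obvious: the defining generators $\tilde X_\alpha(u)$ with $u\in IV_\alpha$ are killed by $\tilde\rho$, and $\ker\tilde\rho$ is normal. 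The work lies in the opposite inclusion.

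To construct $\psi:St_P(R/I)\to St_P(R)/\tilde E(R,I)$, fix for each $\alpha\in\Phi_P$ an arbitrary set-theoretic section $\sigma_\alpha:V_\alpha\otimes_R(R/I)\to V_\alpha$ of the reduction map, and define $\psi(\tilde X_\alpha(\bar v))=\tilde X_\alpha(\sigma_\alpha(\bar v))\cdot\tilde E(R,I)$ on the generators of $St_P(R/I)$. For $\psi$ to extend to a group homomorphism two facts suffice: first, that $\tilde X_\alpha(v)\cdot\tilde E(R,I)$ depends only on the class $\bar v\in V_\alpha/IV_\alpha$; second, that the images of the generators satisfy the sum and Chevalley commutator relations defining $St_P(R/I)$. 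Granting the first, the second is automatic, because each side of a defining relation in $St_P(R/I)$ lifts to an expression in $St_P(R)$ which, modulo $\tilde E(R,I)$, is independent of the choice of section, and the lifted relations hold in $St_P(R)$ by~\eqref{eq:sum-St} and~\eqref{eq:Chev-St}.

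The crux is therefore the first statement. For $v,v'\in V_\alpha$ with $w:=v-v'\in IV_\alpha$, the sum relation~\eqref{eq:sum-St} in $St_P(R)$ gives
$$
\tilde X_\alpha(v)=\tilde X_\alpha(v'+w)=\tilde X_\alpha(v')\tilde X_\alpha(w)\prod_{i>1}\tilde X_{i\alpha}\bigl(q^i_\alpha(v',w)\bigr)^{-1}.
$$
Clearly $\tilde X_\alpha(w)\in\tilde E(R,I)$. The polynomial map $q^i_\alpha$ is homogeneous of degree $i$ and vanishes whenever either argument is zero (as is forced by $\tilde X_\alpha(0)=1$ in~\eqref{eq:sum-St}), so every one of its monomial components involves both arguments; hence evaluation at $(v',w)$ with $w\in IV_\alpha$ lands in $IV_{i\alpha}$, and consequently $\tilde X_{i\alpha}(q^i_\alpha(v',w))\in\tilde E(R,I)$. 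This gives $\tilde X_\alpha(v)\equiv\tilde X_\alpha(v')\pmod{\tilde E(R,I)}$, as required. Since $\psi$ and $\overline{\tilde\rho}$ are mutually inverse on generators, $\overline{\tilde\rho}$ is an isomorphism, which is precisely exactness of the stated sequence.

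The only technical obstacle I anticipate is making rigorous the assertion that $q^i_\alpha(v',w)\in IV_{i\alpha}$ when $w\in IV_\alpha$. When $V_\alpha$ is free over $R$, this is transparent from the monomial expansion of a homogeneous polynomial vanishing on both coordinate axes. In the general projective case one reduces to the free case by Zariski-localizing on $\Spec A$ (equivalently on $\Spec R$), using that both $V_\alpha$ and the formation of $IV_\alpha=V_\alpha\otimes_R I$ are compatible with localization, together with the locally-defined nature of the relative root subschemes already exploited in~\S\ref{sec:prel}.
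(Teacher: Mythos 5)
Your proof is correct, and it shares the paper's overall strategy: exhibit a two-sided inverse to the induced surjection $St_P(R)/\tilde E(R,I)\to St_P(R/I)$ by invoking the defining presentation of $St_P(R/I)$. Where you diverge is in how well-definedness of the candidate inverse on generators is established. You carry this out by hand: you observe that $q^i_\alpha$ must vanish when either argument is zero, deduce that every monomial involves both arguments, and conclude that $q^i_\alpha(v',w)\in IV_{i\alpha}$ whenever $w\in IV_\alpha$ (with the projective case reduced to the free case by localization). The paper instead sidesteps all the polynomial bookkeeping by applying $\pst_P$ together with Lemma~\ref{lem:mono}: since $\pst_P|_{\tilde U_{(\alpha)}(R)}$ is injective, one gets immediately that $\tilde U_{(\alpha)}(R)\cap\tilde E(R,I)=\tilde U_{(\alpha)}(I)$, hence $\tilde U_{(\alpha)}(R)/(\tilde U_{(\alpha)}(R)\cap\tilde E(R,I))\cong U_{(\alpha)}(R/I)$, and functoriality of the Chevalley commutator formula finishes the relation-checking. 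Both arguments work; yours is self-contained and elementary at the level of the polynomial maps, whereas the paper's leverages the already-proved injectivity on unipotent parts, which is shorter and avoids the reduction to the free case that you correctly flag as the one technical point needing care.
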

\begin{proof}
Since the maps $V_\alpha\to V_\alpha\otimes_R R/I$ are surjective, the homomorphism $\tilde\rho$ is surjective.
Clearly, $\tilde E_P(R,I)\le \ker\tilde\rho$,
so $\tilde\rho$ factors through the surjective homomorphism
$$
\bar\rho:St_P(R)/\tilde E_P(R,I)\to St_P(R/I).
$$
The group $St_P(R)/\tilde E_P(R,I)$ is generated by all subgroups
$\tilde U_{(\alpha)}(R)/\bigl(\tilde U_{(\alpha)}(R)\cap \tilde E_P(R,I)\bigr)$, and Lemma~\ref{lem:mono} implies that
such a subgroup is isomorphic to
$$
U_{(\alpha)}(R)/\bigl(U_{(\alpha)}(R)\cap E_P(R,I)\bigr)
=U_{(\alpha)}(R)/U_{(\alpha)}(I)\cong U_{(\alpha)}(R/I).
$$
Clearly, the images
of $\tilde X_\alpha(v)$, $\alpha\in\Phi_P$, $v\in V_\alpha$ in
$St_P(R)/\tilde E_P(R,I)$ satisfy the same relations~\eqref{eq:sum-St} and~\eqref{eq:Chev-St}
as the corresponding elements $\tilde X_\alpha(\rho(v))$ in
$St_P(R/I)$. Therefore, the
canonical map $St_P(R/I)\to E(R/I)$ factors through the group $St_P(R)/\tilde E_P(R,I)$,
producing an inverse for $\bar\rho$.
\end{proof}

Recall that by Lemma~\ref{lem:max-min-roots} for any $\alpha\in\Phi_P$ one has
$$
\Phi_P\cap\ZZ\alpha=\{\pm\alpha,\pm2\alpha,\ldots,\pm m_\alpha\alpha\}.
$$
For any $a\in \tilde E_\alpha(A)$,
$u_i\in V_{i\alpha}$, $1\le i\le m_\alpha$, we set
$$
\tilde Z_\alpha(a,u_1,\ldots,u_{m_\alpha})=a\left(\prod_{i=1}^{m_\alpha} \tilde X_{i\alpha}(u_i)\right)a^{-1}.
$$
The elements $Z_\alpha(a,u_1,\ldots,u_{m_\alpha})\in E_P(R)$ are defined
in the same way.

\begin{lem}\label{lem:EI-gen}
For any ideal $I\subseteq R$ the group $\tilde E_P(R,I)$ is generated by all
$\tilde Z_\alpha(a,u)$, $a\in \tilde E_\alpha(A)$, $u=(u_1,\ldots,u_{m_\alpha})$, $u_i\in IV_{i\alpha}$, $1\le i\le m_\alpha$.
\end{lem}
\begin{proof}
The corresponding statement for $E_P(R,I)\le G(R)$ was proved in~\cite[Lemma 4.3]{St-poly}. Since the proof only used
the relations~\eqref{eq:sum} and~\eqref{eq:Chev}, the same proof goes for $\tilde E_P(R,I)$.
\end{proof}

\subsection{Lifting the Levi subgroup to the Steinberg group}

\begin{dfn}\label{def:tilde-L}
Define the group $\tilde L_P(R)$ to be the subgroup of $St_P(R)$ generated by all elements $\tilde h$ such that
$\tilde h\in\tilde E_\alpha(R)$ for some
$\alpha\in\Phi_P$, and  $\pst_P(\tilde h)\in L_P(R)$.
\end{dfn}

\begin{lem}\label{lem:diag-action}
Assume that $\alpha\in\Phi_P$ belongs to an irreducible component of rank $\ge 2$.
Take $\tilde h\in\tilde L_P(R)$ and set $h=\pst_P(\tilde h)$. Then
for any $v\in V_\alpha$ one has
\begin{equation}\label{eq:h-action}
\tilde h\tilde X_\alpha(v)\tilde h^{-1}=(\pst_P|_{\tilde U_{(\alpha)}})^{-1}(h X_\alpha(v) h^{-1}).
\end{equation}
\end{lem}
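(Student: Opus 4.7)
The plan is to reduce the identity to a generating set of $\tilde L_P(R)$ and then split into two cases, depending on whether $\alpha$ is proportional to the defining root $\beta$ of the generator under consideration. First I would check that the set of $\tilde h\in\tilde L_P(R)$ satisfying~\eqref{eq:h-action} for all $\alpha,v$ is a subgroup: if $\tilde h_2\tilde X_\alpha(v)\tilde h_2^{-1}=\prod_i\tilde X_{j_i\alpha}(w_i)\in\tilde U_{(\alpha)}(R)$ by the inductive hypothesis, then conjugation by $\tilde h_1$ distributes over this product and reduces the claim for $\tilde h_1\tilde h_2$ to the claim for $\tilde h_1$ applied to each factor $\tilde X_{j_i\alpha}(w_i)$; the image under $\pst_P$ matches by functoriality, and the generating set is itself closed under inversion. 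Thus I would only need to verify~\eqref{eq:h-action} on an arbitrary generator $\tilde h\in\tilde E_\beta(R)$ with $h=\pst_P(\tilde h)\in L_P(R)$, and in fact only the inclusion $\tilde h\tilde X_\alpha(v)\tilde h^{-1}\in\tilde U_{(\alpha)}(R)$ requires proof, since the image $hX_\alpha(v)h^{-1}\in U_{(\alpha)}(R)$ is automatic from~\cite[Theorem~2(ii)]{PS}.

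The easy case is $\alpha\notin\ZZ\beta$. Here I would set $S_+=\{i\alpha+j\beta\in\Phi_P:i\ge 1\}$. Since the $\alpha$-coefficient of any positive linear combination of elements of $S_+$ is bounded below by the number of summands, $S_+$ is closed under positive linear combinations and contains no pair of opposite roots, so Lemma~\ref{lem:mono} supplies a group isomorphism $\pst_P\colon\tilde U_{S_+}(R)\to U_{S_+}(R)$. Expanding $\tilde h$ as a word in the generators $\tilde X_{\pm k\beta}(u)$ of $\tilde E_\beta(R)$, each one-step conjugation is governed by the commutator formula~\eqref{eq:Chev-St}: the commutator $[\tilde X_{\pm k\beta}(u),\tilde X_\gamma(w)]$ with $\gamma\in S_+$ is a product of terms $\tilde X_{i(\pm k\beta)+j\gamma}$ whose $\alpha$-coefficient equals $j$ times the $\alpha$-coefficient of $\gamma$ and is therefore $\ge 1$, keeping the step inside $\tilde U_{S_+}(R)$. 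Induction on word length then yields $\tilde h\tilde X_\alpha(v)\tilde h^{-1}\in\tilde U_{S_+}(R)$; its image under $\pst_P$ lies in $U_{(\alpha)}(R)\subseteq U_{S_+}(R)$, and injectivity of $\pst_P|_{\tilde U_{S_+}(R)}$ forces the element itself into $\tilde U_{(\alpha)}(R)$, establishing~\eqref{eq:h-action}.

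The hard part will be the case $\alpha\in\ZZ\beta$. Since $\tilde E_\beta(R)=\tilde E_\alpha(R)$ when $\beta$ and $\alpha$ are proportional, the conjugation stays inside $\tilde E_\alpha(R)$, and what I need to show is $\tilde E_\alpha(R)\cap\pst_P^{-1}(U_{(\alpha)}(R))\subseteq\tilde U_{(\alpha)}(R)$. The presentation~\eqref{eq:sum-St}--\eqref{eq:Chev-St} of $St_P(R)$ contains no relation directly relating $\tilde U_{(\alpha)}(R)$ and $\tilde U_{(-\alpha)}(R)$, so intrinsic rank-$1$ manipulations inside $\tilde E_\alpha(R)$ cannot suffice; the hypothesis $\rank\Phi_P\ge 2$ must be used essentially. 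My plan here would be to pick $\gamma\in\Phi_P\setminus\ZZ\alpha$ and employ the special chain decompositions furnished by Lemma~\ref{lem:rootsum1} and Lemma~\ref{lem:chains} inside the rank-$2$ subsystem spanned by $\alpha$ and $\gamma$, in order to rewrite each generator of $\tilde L_P(R)\cap\tilde E_\alpha(R)$ as a product of elements lying in $\tilde E_{\gamma'}(R)$ for various $\gamma'\notin\ZZ\alpha$, whereupon the previous case applies factor by factor. This is essentially a local-ring adaptation of the rank-$1$ Bruhat-type computation of Deodhar~\cite[Lemma~1.10]{Deo}.
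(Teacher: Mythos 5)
Your reduction to generators and your treatment of the first case are sound and parallel the paper's argument: the paper takes the unipotent closed set $S=(\ZZ\beta+\NN\alpha)\cap\Phi_P$ (which is your $S_+$), shows $\tilde h\tilde X_\alpha(v)\tilde h^{-1}\in\tilde U_S(R)$ via~\cite[Lemma~4.4]{St-poly} after writing $\tilde h$ as a product of $\tilde Z_\beta$-elements, and then invokes Lemma~\ref{lem:mono}; your word-by-word induction using~\eqref{eq:Chev-St} is an equivalent bookkeeping of the same idea.

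Where you diverge from the paper is in the proportional case $\alpha\in\ZZ\beta$, and there is a genuine gap in your plan. You propose to decompose the \emph{conjugator} $\tilde h\in\tilde L_P(R)\cap\tilde E_\alpha(R)$ into a product of elements of $\tilde E_{\gamma'}(R)$ with $\gamma'\notin\ZZ\alpha$ and then ``apply the previous case factor by factor.'' But your Case~1 only applies when the conjugating element is itself a generator of $\tilde L_P(R)$, i.e.\ a \emph{diagonal} element $\tilde g\in\tilde E_{\gamma'}(R)$ with $\pst_P(\tilde g)\in L_P(R)$. Nothing in Lemma~\ref{lem:rootsum1} or Lemma~\ref{lem:chains} produces such a factorization: those lemmas decompose root elements along chains of roots, not diagonal elements, and the individual factors of any naive word decomposition of $\tilde h$ will generally fail to normalize $\tilde U_{(\alpha)}$ (or any $\tilde U_{(\alpha_i)}$), so the inner conjugations leave the unipotent subgroup and the inductive bookkeeping collapses. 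The paper instead rewrites the \emph{conjugated} element: by~\cite[Lemma~11]{PS} (whose proof uses only the Chevalley relations, hence is valid verbatim in $St_P(R)$) one has $\tilde X_\alpha(v)=\prod_i\tilde X_{\alpha_i}(v_i)$ with each $\alpha_i$ linearly independent from $\alpha$ (equivalently from $\beta$), so that each factor $\tilde h\tilde X_{\alpha_i}(v_i)\tilde h^{-1}$ is computed by Case~1 and the product is reassembled via Lemma~\ref{lem:mono}. Rewriting $\tilde X_\alpha(v)$ rather than $\tilde h$ is the step your plan is missing, and it is the mechanism by which the hypothesis $\rank\Phi_P\ge 2$ actually enters.
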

\begin{proof}
It is enough to prove the claim for all $\tilde h\in\tilde E_\beta(R)$, $\beta\in\Phi_P$, such that $h=\pst_P(\tilde h)\in L_P(R)$.
Assume first that $\alpha$ and $\beta$ are linearly independent.
By Lemma~\ref{lem:EI-gen} $\tilde h$ as a product of elements of the form
$\tilde Z_{\beta}(b,u_1,\ldots,u_{m_\beta})$, $b\in \tilde E_\beta(R)$, $u_i\in V_{i\beta}$, $1\le i\le m_\beta$.
Then by~\cite[Lemma 4.4]{St-poly}, which was proved for elements in $E_P(R)$ but used only the relations~\eqref{eq:sum-St}
and~\eqref{eq:Chev-St} that hold in $St_P(R)$, one has
$\tilde h\tilde X_\alpha(v)\tilde h^{-1}\in \tilde U_{S}(R)$, where
$S=\ZZ\beta+\NN\alpha\subseteq\ZZ\Phi_P$. Since $S$ is subject to Lemma~\ref{lem:mono}, we have
$\tilde h\tilde X_\alpha(v)\tilde h^{-1}\in\tilde U_{(\alpha)}(R)$, and the claim follows. If $\alpha$ and $\beta$ are linearly dependent,
we reduce to the previous case by Lemma~\ref{lem:lemma11}.
\end{proof}

\begin{lem}\label{lem:st-bigcell}

(i) If for some $x,x'\in \tilde U_P(R)$, $y,y'\in \tilde U_{P^-}(R)$ and $t,t'\in \tilde L_P(R)$
one has $\pst_P(xty)=\pst_P(x't'y')$, then $x=x'$, $y=y'$.

(ii) The natural map $\tilde U_P(R)\times \tilde L_P(R)\times \tilde U_{P^-}(R)\to St_P(R)$ induced by multiplication in
$St_P(R)$ is injective.
\end{lem}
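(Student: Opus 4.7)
The plan is to deduce both parts from two ingredients: the injectivity of $\pst_P$ on $\tilde U_P(R)$ and on $\tilde U_{P^-}(R)$ supplied by Lemma~\ref{lem:mono}, together with the standard big cell decomposition for the reductive group $G$, which asserts that the multiplication map $U_P\times L_P\times U_{P^-}\to G$ is an open immersion of schemes and, in particular, is injective on $R$-points.

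For part (i), I first observe that $\Phi_P^+$ is closed under positive linear combinations and satisfies $m\alpha+k\beta\neq 0$ for all $\alpha,\beta\in\Phi_P^+$ and $m,k>0$, so Lemma~\ref{lem:mono} applies and gives a bijection $\pst_P|_{\tilde U_P(R)}\colon \tilde U_P(R)\to U_P(R)$; the same reasoning applied to $\Phi_P^-$ yields a bijection $\tilde U_{P^-}(R)\to U_{P^-}(R)$. Next, I apply $\pst_P$ to the hypothesis of (i); using that $\pst_P(\tilde L_P(R))\subseteq L_P(R)$ directly from the definition of $\tilde L_P(R)$, I obtain an equality $u_1 l_1 u_1^- = u_2 l_2 u_2^-$ in $G(R)$ with $u_i\in U_P(R)$, $l_i\in L_P(R)$, $u_i^-\in U_{P^-}(R)$. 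The big cell injectivity forces $u_1=u_2$ and $u_1^-=u_2^-$, i.e.\ $\pst_P(x)=\pst_P(x')$ and $\pst_P(y)=\pst_P(y')$, and the injectivity of $\pst_P$ on $\tilde U_P(R)$ and $\tilde U_{P^-}(R)$ just established then yields $x=x'$ and $y=y'$.

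Part (ii) is an immediate consequence of (i): the equality $xty=x't'y'$ in $St_P(R)$ pushes forward to $\pst_P(xty)=\pst_P(x't'y')$, so (i) forces $x=x'$ and $y=y'$, and cancellation in $St_P(R)$ delivers $t=t'$. The only subtle ingredient is the big cell injectivity on $R$-points, but this is standard: since $U_P\cap P^-=1$ and $L_P\cap U_{P^-}=1$ as subgroup schemes of $G$, the multiplication map $U_P\times L_P\times U_{P^-}\to G$ is a monomorphism of schemes (in fact an open immersion by~\cite[Exp.~XXVI]{SGA}), and therefore injective on $R$-points. Consequently neither part of the lemma should present any serious difficulty beyond invoking the tools already set up.
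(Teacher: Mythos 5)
Your proof is correct and follows essentially the same route as the paper: apply $\pst_P$ to reduce to the injectivity of $U_P(R)\times L_P(R)\times U_{P^-}(R)\to G(R)$, deduce $\pst_P(x)=\pst_P(x')$ and $\pst_P(y)=\pst_P(y')$, then invoke Lemma~\ref{lem:mono} to lift these equalities back to $St_P(R)$. Your treatment of part (ii) (apply (i) and cancel to get $t=t'$) is just a more explicit spelling-out of the paper's terse ``follows by applying $\pst_P$''.
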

\begin{proof}
The natural map $U_P(R)\times L_P(R)\times U_{P^-}(R)\to G(R)$ is injective, hence $\pst_P(xty)=\pst_P(x't'y')$
implies $\pst_P(x)=\pst_P(x')$ and $\pst_P(y)=\pst_P(y')$. By Lemma~\ref{lem:mono} this implies
$x=x'$ and $y=y'$, which proves $(i)$. The claim $(ii)$ follows since $xty=x't'y'$ implies $\pst_P(xty)=\pst_P(x't'y')$.
\end{proof}

\begin{lem}\label{lem:ker-PQ}
In the setting of Lemma~\ref{lem:phiPQ}, one has $\ker\phi_{PQ}\subseteq\tilde L_P(R)$.
If, moreover, every irreducible component of $\Phi_P$ has rank $\ge 2$, then
$$
\ker\phi_{PQ}\subseteq\Cent(St_P(R)).
$$
\end{lem}
\begin{proof}
Let $\pi_{PQ}:\Phi_Q\to\Phi_P\cup\{0\}$ be the natural surjection. By Lemma~\ref{lem:lemma12} (ii) for any $\alpha\in\Phi_Q$ there is $\alpha'\in\Phi_P$ such that
$\tilde E_\alpha(R)\le\phi_{PQ}(\tilde E_{\alpha'}(R))$. Moreover, if $\pi_{PQ}(\alpha)\neq 0$, by Lemma~\ref{lem:mono}
we can choose $\alpha'=\pi_{PQ}(\alpha)$. For any $u\in V_\alpha$ let
$\hat X_\alpha(u)$ be an element of $\tilde E_{\alpha'}(R)$ such that
$$
\phi_{PQ}(\hat X_\alpha(u))=\tilde X_\alpha(u);
$$
if $\pi_{PQ}(\alpha)\neq 0$, we impose $\hat X_\alpha(u)\in \tilde U_{(\alpha')}(R)$.
Then for any $\beta\in\Phi_P$ the set $\tilde X_\beta(V_\beta)$ is
contained in the subgroup of $St_P(R)$ generated by the elements $\hat X_\alpha(u)$,
$\alpha\in\pi_{PQ}^{-1}(\beta)$, $u\in V_\alpha$. Therefore, the group freely generated by all
$\tilde X_\alpha(u)$, $\alpha\in\Phi_Q$, $u\in V_\alpha$, surjects onto $St_P(R)$ via the map
$\tilde X_\alpha(u)\mapsto\hat X_\alpha(u)$. By the definition of $St_Q(R)$
this implies that $\ker_{PQ}$ is contained in the subgroup of $St_P(R)$ generated by
the elements
\begin{equation*}
\sigma(\alpha,v,w)=\hat X_\alpha(v)\hat X_\alpha(w)\Bigl(\hat X_\alpha(v+w)\prod_{i>1}\hat X_{i\alpha}\l(q^i_\alpha(v,w)\r)\Bigr)^{-1}
\end{equation*}
for all $\alpha\in\Phi_Q,\ v,w\in V_\alpha\otimes_R R'$,
and by the elements
\begin{equation*}
\begin{array}{c}
\tau(\alpha,\beta,u,v)=[\hat X_\alpha(u),\hat X_\beta(v)]\Bigl(\prod\limits_{i,j>0}\hat X_{i\alpha+j\beta}\bigl(N_{\alpha\beta ij}(u,v)\bigr)\Bigr)^{-1}
\end{array}
\end{equation*}
for all $\alpha,\beta\in\Phi_Q$ such that
$m\alpha\neq-k\beta$ for any $m,k>0$ and all $u\in V_\alpha\otimes_R R',\ v\in V_\beta\otimes_R R'$.
We show that all $\sigma(\alpha,v,w)$ and $\tau(\alpha,\beta,u,v)$ belong to $\tilde L_P(R)$.

Since $\pst_P=\pst_Q\circ\phi_{PQ}$, we have $\pst_P(\sigma(\alpha,v,w))=\pst_P(\tau(\alpha,\beta,u,v))=1$
for all $\alpha,\beta,u,v,w$. Then, according to Definition~\ref{def:tilde-L}, it is enough to find $\alpha'\in\Phi_P$ such that
$\sigma(\alpha,v,w)$, or, respectively, $\tau(\alpha,\beta,u,v)$ belongs to $\tilde E_{\alpha'}(R)$.
This holds automatically for all elements $\sigma(\alpha,v,w)$, and for all $\tau(\alpha,\beta,u,v)$ such that $\pi_{PQ}(\alpha)\neq 0$ and
$\pi_{PQ}(\beta)\neq 0$ are collinear elements of $\Phi_P$. If $\pi_{PQ}(\alpha)\neq 0$ and
$\pi_{PQ}(\beta)\neq 0$ are non-collinear elements of $\Phi_P$, then they span a unipotent closed subset of
$\Phi_P$, and by applying Lemma~\ref{lem:mono}, we readily deduce that $\tau(\alpha,\beta,u,v)=1$.

It remains to consider the elements $\tau(\alpha,\beta,u,v)$, where $\alpha\in\Phi_Q$ satisfies
$\pi_{PQ}(\alpha)=0$. In this case $\pst_P(\hat X_{\alpha}(u))=\pst_Q(\tilde X_{\alpha}(u))\in L_P(R)$.
Then $\hat X_\alpha(u)\in \tilde L_P(R)$ by Definition~\ref{def:tilde-L}.
Now if $\pi_{PQ}(\beta)=0$ as well, the same argument shows that all factors in $\tau(\alpha,\beta,u,v)$
belong to $\tilde L_P(R)$, hence $\tau(\alpha,\beta,u,v)\in\tilde L_P(R)$. If $\pi_{PQ}(\beta)\neq 0$,
then we have $\hat X_{i\alpha+j\beta}(V_{i\alpha+j\beta})\subseteq \tilde U_{(\beta')}(R)$ for all
$i,j\in\ZZ$ with $j>0$, where $\beta'=\pi_{PQ}(\beta)$. Let $\alpha'\in\Phi_P$ be such that
$\hat X_\alpha(u)\in \phi_{PQ}(\tilde E_{\alpha'}(R))$. If $\alpha'$ and $\beta'$ are collinear, then
there is $\gamma\in\Phi_P$ such that both $\alpha'$ and $\beta'$ are multiples of $\gamma$, and then
$\tau(\alpha,\beta,u,v)\le\tilde E_\gamma(R)$ as required. If $\alpha'$ and $\beta'$ are non-collinear, then
either they belong to different irreducible components, or they belong to a component of rank $\ge 2$. In the first case
the relation~\eqref{eq:Chev-St} implies that $\tau(\alpha,\beta,u,v)\in\tilde U_{(\beta')}(R)$. In the second case
Lemma~\ref{lem:diag-action} implies $\tau(\alpha,\beta,u,v)=1$.

We have proved that $\ker\phi_{PQ}\subseteq\tilde L_P(R)$. If all irreducible components of $\Phi_P$ have rank $\ge 2$,
then Lemma~\ref{lem:mono} together with Lemma~\ref{lem:diag-action} imply
$\ker\phi_{PQ}\subseteq\Cent(St_P(R))$.
\end{proof}

\subsection{Proof of Theorem~\ref{thm:K2}}

Throughout this subsection, we always assume that $R$ is a local ring, $I\subseteq R$ is its maximal ideal,
$$
\rho:R\to R/I=l
$$
is the residue map, $G$ is
an simply connected simple reductive group over $R$, $P$ is a parabolic $R$-subgroup of $G$,
$L_P\le P$ a Levi subgroup, and all irreducible components of $\Phi_P$ have $\rank\ge 2$.

\begin{lem}\label{lem:E-cell}
One has $\tilde E_P(R,I)\subseteq \tilde U_P(I)\tilde L_P(R)\tilde U_{P^-}(I)$.
\end{lem}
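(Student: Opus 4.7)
The plan is to verify the inclusion by combining a lift of the Gauss decomposition in the underlying group $G(R,I)$ with an inductive analysis of the generators of $\tilde E(R,I)$ provided by Lemma~\ref{lem:E-gen}. Set $\mathcal{C} = \tilde U_P(I)\tilde L_P(R)\tilde U_{P^-}(I)$. My strategy is to show that every generator of $\tilde E(R,I)$ lies in $\mathcal{C}$, and that $\mathcal{C}$ is stable under left multiplication by such generators, so that we may iteratively collect products into the big-cell form.

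For a single generator $\tilde Z_\alpha(a,u_1,\ldots,u_{m_\alpha}) = a\bigl(\prod_i \tilde X_{i\alpha}(u_i)\bigr)a^{-1}$ with $a\in\tilde E_\alpha(R)$ and $u_i\in IV_{i\alpha}$, first note that the whole element lives in $\tilde E_\alpha(R)$, so this is essentially a rank-one computation. Since $R$ is local with maximal ideal $I$, the image under $\pst_P$ lies in the rank-one congruence subgroup $G_\alpha(R)\cap G(R,I)$, which is contained in the open big cell $U_{(\alpha)}(I)\cdot (L_P\cap G_\alpha)(R,I)\cdot U_{(-\alpha)}(I)$ because this cell is defined by non-vanishing of functions that are units at the identity. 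So $\pst_P(\tilde Z_\alpha(a,u))=u_0\cdot l_0\cdot v_0$ with $u_0\in U_{(\alpha)}(I)$, $v_0\in U_{(-\alpha)}(I)$, $l_0\in (L_P\cap G_\alpha)(R,I)$. Lift $u_0,v_0$ uniquely via Lemma~\ref{lem:mono} to $\tilde u_0\in\tilde U_{(\alpha)}(I)$ and $\tilde v_0\in\tilde U_{(-\alpha)}(I)$. The element $\tilde u_0^{-1}\tilde Z_\alpha(a,u)\tilde v_0^{-1}$ projects to $l_0$ and, using the rank-one Gauss decomposition for the Steinberg group over the field $R/I$ from Deodhar~\cite{Deo} together with Lemma~\ref{lem:mono} (to rule out extraneous contributions to the unipotent parts), it must itself lie in $\tilde L_P(R)$. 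Thus $\tilde Z_\alpha(a,u)\in\mathcal{C}$.

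To combine generators, I would then show the absorption property $\mathcal{C}\cdot \tilde X_\alpha(v)\subseteq\mathcal{C}$ for $v\in IV_\alpha$. Write an element of $\mathcal{C}$ as $\tilde x\tilde h\tilde y$ with $\tilde x\in\tilde U_P(I)$, $\tilde h\in\tilde L_P(R)$, $\tilde y\in\tilde U_{P^-}(I)$. Using Lemma~\ref{lem:diag-action}, conjugation of $\tilde X_\alpha(v)$ by $\tilde h^{-1}$ keeps it inside $\tilde U_{(\alpha)}(I)$ (since the maps $\varphi^i_{g,\alpha}$ of property (ii) are polynomial and so send $IV_\alpha$ into $IV_{i\alpha}$). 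The resulting product is then $\tilde x\tilde h\cdot \tilde X_\alpha'(v')\cdot \tilde y$ for $v'\in IV_\alpha$. If $\alpha\in\Phi_P^+$ one commutes $\tilde X_\alpha'(v')$ through $\tilde y$ using~\eqref{eq:Chev-St}, producing extra factors $\tilde X_{i\alpha+j\beta}(N_{\alpha\beta ij}(v',\cdot))$; since $v'\in IV_\alpha$ and the $N_{\alpha\beta ij}$ are homogeneous of degree $\ge 1$ in their first argument, all these extra factors lie in $\tilde U_{(\ldots)}(I)$, and after regrouping the result remains in $\mathcal{C}$. The symmetric argument handles $\alpha\in\Phi_P^-$. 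The one delicate case is when the commutator move produces antiparallel pairs; here one again passes to the rank-one subgroup $\tilde E_\alpha(R)$ and appeals to the same Gauss-type decomposition as in the previous paragraph.

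The main obstacle is precisely this antiparallel case, where no Chevalley commutator formula is available and one must instead invoke the rank-one Gauss decomposition of $\tilde E_\alpha(R)$ together with the uniqueness provided by Lemma~\ref{lem:mono} to conclude that the ``Levi piece'' lands in $\tilde L_P(R)$ rather than merely in $\pst_P^{-1}(L_P(R))$. The special chains constructed in Lemmas~\ref{lem:specialchain-max} and~\ref{lem:chains} are expected to play a supporting role in organizing the commutator moves so that each intermediate factor remains in the appropriate $\tilde U_{(\cdot)}(I)$ and the induction on the number of generators terminates in an expression of the required form.
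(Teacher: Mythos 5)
Your proposal is correct and follows essentially the same route as the paper: decompose each generator $\tilde Z_\alpha(a,u)$ via the rank-one big cell over the local ring, lift the unipotent pieces uniquely via Lemma~\ref{lem:mono}, then absorb products through the Chevalley commutator formula for non-antiparallel pairs and the same rank-one big-cell argument for antiparallel pairs, using Lemma~\ref{lem:diag-action} to move $\tilde L_P(R)$ past the unipotents. One small over-engineering: to show $\tilde u_0^{-1}\tilde Z_\alpha(a,u)\tilde v_0^{-1}\in\tilde L_P(R)$ you invoke Deodhar's field-level Gauss decomposition, but no such input is needed — the element visibly lies in $\tilde E_\alpha(R)$ and projects into $L_P(R)$, so it is a generator of $\tilde L_P(R)$ by that group's very definition.
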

\begin{proof}
Set $Z=\tilde U_P(I)\tilde L_P(R)\tilde U_{P^-}(I)$.
By Lemma~\ref{lem:EI-gen} the group $\tilde E_P(R,I)$ is generated by elements $\tilde Z_\alpha(a,u_1,\ldots,u_{m_\alpha})$,
$\alpha\in\Phi_P$, $a\in\tilde E_\alpha(R)$, $u_i\in IV_{i\alpha}$. Hence it is enough to show that
$\tilde Z_\alpha(a,u_1,\ldots,u_{m_\alpha})Z\subseteq Z$.

Denote by $G_\alpha$ the reductive subgroup $U_{\ZZ\alpha}$ of $G$. Then
$L_P\le G_\alpha$ is the common Levi subgroup of two opposite parabolic subgroups
$L_PU_{(\alpha)}$ and $L_PU_{(-\alpha)}$ of $G_\alpha$, and $U_{(\alpha)}L_PU_{(-\alpha)}$ is an open
subscheme of $G_\alpha$. Then, since
$$\rho\l(\pst_P(\tilde Z_\alpha(a,u_1,\ldots,u_{m_\alpha}))\r)=1,
$$ we have
$$
\pst_P(\tilde Z_\alpha(a,u_1,\ldots,u_{m_\alpha}))\in U_{(\alpha)}(I)L_P(R,I)U_{(-\alpha)}(I).
$$
Therefore, by the definition of $\tilde L_P(R)$, we have
$$
\tilde Z_\alpha(a,u_1,\ldots,u_{m_\alpha})\in \tilde U_{(\alpha)}(I)\tilde L_P(R)\tilde U_{(-\alpha)}(I).
$$

By Lemma~\ref{lem:diag-action} we have $\tilde L_P(R)Z\subseteq Z$. Therefore, in order to establish
the inclusion $\tilde Z_\alpha(a,u_1,\ldots,u_{m_\alpha})Z\subseteq Z$, it is enough to show
that
\begin{equation}\label{eq:comm-cell-claim}
\tilde U_{(-\alpha)}(I)\tilde U_P(I)\subseteq Z
\end{equation}
for any non-divisible relative root $\alpha\in\Phi_P^+$.
By Lemma~\ref{lem:mono} the group $\tilde U_P(I)$ can be written as a product of $\tilde U_{(\beta)}(I)$ with
$\beta$ running over all non-divisible elements of $\Phi_P^+$ in any fixed order.
One shows exactly as above that
\begin{equation}\label{eq:comm-cell-1}
\tilde U_{(-\alpha)}(I)\tilde U_{(\alpha)}(I)\subseteq U_{(\alpha)}(I)\tilde L_P(R) U_{(-\tilde\alpha)}(I).
\end{equation}
Let $\beta\in\Phi_P^+$ be non-collinear to $\alpha$. By the Chevalley commutator formula~\eqref{eq:Chev-St} one has
\begin{equation}\label{eq:comm-cell-2}
\tilde U_{(-\alpha)}(I)\tilde U_{(\beta)}(I)\subseteq
\hspace{-15pt}\prod\limits_{\begin{array}{c}
\vspace{-3pt}\st i\ge 0,\, j>0,\\
\st -i\alpha+j\beta\in\Phi_P^+
\end{array}}\hspace{-15pt}
\tilde U_{(-i\alpha+j\beta)}(I)
\ \cdot
\hspace{-15pt}\prod\limits_{\begin{array}{c}
\vspace{-3pt}\st i>0,\, j\ge 0,\\
\st -i\alpha+j\beta\in\Phi_P^-
\end{array}}\hspace{-15pt}
\tilde U_{(-i\alpha+j\beta)}(I).
\end{equation}
Then formulas~\eqref{eq:comm-cell-1} and~\eqref{eq:comm-cell-2} together imply~\eqref{eq:comm-cell-claim}.
\end{proof}

\begin{lem}\label{lem:rho-L}
The map $\tilde\rho|_{\tilde L_P(R)}:\tilde L_P(R)\to\tilde L_P(l)$ is surjective.
\end{lem}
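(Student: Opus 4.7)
The plan is to lift every generator of $\tilde L_P(l)$ to an element of $\tilde L_P(R)$ under $\tilde\rho$. By definition, a typical generator has the form $\tilde h'\in \tilde E_\alpha(l)$ with $\pst_P(\tilde h')\in L_P(l)$ for some $\alpha\in\Phi_P$, so it is a word in the generators $\tilde X_\beta(\bar v)$ with $\beta\in\ZZ\alpha\cap\Phi_P$ and $\bar v\in V_\beta\otimes_R l$.

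First I would produce an arbitrary preimage $\tilde h_1\in \tilde E_\alpha(R)$ of $\tilde h'$: choose liftings $v\in V_\beta$ of each $\bar v$ via the surjection $V_\beta\twoheadrightarrow V_\beta\otimes_R l$ and replace each letter $\tilde X_\beta(\bar v)$ by $\tilde X_\beta(v)$. The resulting product lies in $\tilde E_\alpha(R)$, and by functoriality of $St_P(-)$ it satisfies $\tilde\rho(\tilde h_1)=\tilde h'$.

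Next, set $h_1=\pst_P(\tilde h_1)\in G_\alpha(R)$. Its reduction $\rho(h_1)=\pst_P(\tilde h')$ lies in $L_P(l)$, and in particular in the big cell $U_{(\alpha)}L_PU_{(-\alpha)}$ of $G_\alpha$. Because this big cell is a Zariski-open subscheme of $G_\alpha$ and $\Spec R$ has a unique open subset containing its closed point, $h_1$ itself lies in the big cell over $R$, yielding a unique factorization
$$
h_1=u_+\,\ell\,u_-,\qquad u_\pm\in U_{(\pm\alpha)}(R),\ \ell\in L_P(R),
$$
and reducing modulo $I$ forces $u_\pm\in U_{(\pm\alpha)}(I)$. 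This big-cell factorization is the only non-formal input of the argument; the rest is bookkeeping in the Steinberg group.

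Finally, let $\tilde u_\pm\in \tilde U_{(\pm\alpha)}(I)$ be the unique liftings of $u_\pm$ provided by Lemma~\ref{lem:mono}, and define $\tilde h=\tilde u_+^{-1}\tilde h_1\tilde u_-^{-1}$. Then $\tilde h\in \tilde E_\alpha(R)$ and $\pst_P(\tilde h)=u_+^{-1}h_1u_-^{-1}=\ell\in L_P(R)$, so $\tilde h\in \tilde L_P(R)$ by definition. Since $\tilde u_\pm\in \tilde U_{(\pm\alpha)}(I)\subseteq \tilde E(R,I)=\ker\tilde\rho$ by Lemma~\ref{lem:st-ker}, we conclude $\tilde\rho(\tilde h)=\tilde\rho(\tilde h_1)=\tilde h'$. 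Thus every generator of $\tilde L_P(l)$ lies in the image of $\tilde\rho|_{\tilde L_P(R)}$, establishing surjectivity.
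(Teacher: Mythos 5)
Your argument is correct and follows essentially the same route as the paper's proof: lift a generator of $\tilde L_P(l)$ to $\tilde E_\alpha(R)$, invoke the openness of the big cell $U_{(\alpha)}L_PU_{(-\alpha)}$ in $G_\alpha$ together with the locality of $R$ to get the factorization $h_1=u_+\ell u_-$ with $u_\pm\in U_{(\pm\alpha)}(I)$, and then absorb $u_\pm$ by multiplying the lift by their Steinberg liftings, which lie in $\ker\tilde\rho$ by Lemma~\ref{lem:st-ker}. You have merely made explicit the details the paper leaves implicit.
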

\begin{proof}
Let $\tilde h\in St_P(l)$ be a standard generator of $\tilde L_P(l)$, i.e.
$\tilde h\in\tilde E_\alpha(l)$, $\alpha\in\Phi_P$, and $h=\pst_P(\tilde h)\in L_P(l)$.
Since $h\in L_P(l)$, we conclude as in the proof of Lemma~\ref{lem:E-cell} that
$$
\pst_P(\tilde\rho^{-1}(\tilde h))\subseteq U_{(\alpha)}(I)L_P(R)U_{(-\alpha)}(I).
$$
Since $\tilde h\in\tilde E_\alpha(l)$, it has a preimage in $\tilde E_\alpha(R)$. Multiplying
this preimage, if necessary, by the corresponding elements in $\tilde U_{(\pm\alpha)}(I)$,
we obtain an element in $\tilde L_P(R)$.
\end{proof}

The following lemma is established using the results of V. V. Deodhar~\cite{Deo}.

\begin{lem}\label{lem:ker-in-cell}
Assume that $g\in St_P(R)$ satisfies $\pst_P(g)=1$. Then $g\in\tilde L_P(R)\tilde E_P(R,I)$.
\end{lem}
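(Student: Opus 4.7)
The plan is to use the residue map $\tilde\rho\colon St_P(R)\to St_P(l)$ from Lemma~\ref{lem:st-ker} to reduce the problem to the analogous statement over the residue field $l$, where Deodhar's results~\cite{Deo} apply. Since the maps $\pst_P$ form a natural transformation in the ring argument, we have $\pst_P(l)(\tilde\rho(g))=\rho(\pst_P(g))=1$, so $\tilde\rho(g)$ lies in $\ker\pst_P(l)\subseteq St_P(l)$.

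Next I would reduce the field-case statement to a minimal parabolic. Choose a minimal parabolic subgroup $Q$ of $G\otimes_R l$ over $l$ contained in the base change of $P$, which exists since $l$ is a field and $G$ is isotropic; note that $\rank\Phi_Q\ge\rank\Phi_P\ge 2$. By Lemma~\ref{lem:st-PQ} we have a surjection $\phi_{PQ}\colon St_P(l)\twoheadrightarrow St_Q(l)$, and $\phi_{PQ}(\tilde\rho(g))$ lies in $\ker\pst_Q(l)$. At this point I invoke Deodhar's field-case analysis: every element of $St_Q(l)$ admits a Bruhat-type decomposition in terms of $\tilde U_Q(l)$, $\tilde L_Q(l)$, and lifts of Weyl representatives, and by the uniqueness of Bruhat decomposition in $(G\otimes_R l)(l)^+$ combined with the injectivity statement of Lemma~\ref{lem:mono}, any element mapping to $1$ in $E(l)$ must have trivial Weyl and unipotent components, hence lies in $\tilde L_Q(l)$. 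Applying Lemma~\ref{lem:ker-PQ}, which yields $\phi_{PQ}^{-1}(\tilde L_Q(l))=\tilde L_P(l)$, we conclude $\tilde\rho(g)\in\tilde L_P(l)$.

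Finally, by Lemma~\ref{lem:rho-L} the restriction $\tilde\rho|_{\tilde L_P(R)}\colon\tilde L_P(R)\to\tilde L_P(l)$ is surjective, so I can choose $\tilde h\in\tilde L_P(R)$ with $\tilde\rho(\tilde h)=\tilde\rho(g)$. Then $\tilde h^{-1}g\in\ker\tilde\rho=\tilde E(R,I)$ by Lemma~\ref{lem:st-ker}, and so
\begin{equation*}
g=\tilde h\cdot(\tilde h^{-1}g)\in\tilde L_P(R)\,\tilde E(R,I),
\end{equation*}
as required.

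The main obstacle is the second step: one has to extract from Deodhar's central-extension framework the precise statement that $\ker\pst_Q(l)\subseteq\tilde L_Q(l)$ for a minimal parabolic $Q$ over the field $l$, by identifying our subgroup $\tilde L_Q(l)$ with the ``Cartan-like'' part of Deodhar's Steinberg cover $\tilde G\to G(l)^+$ and exploiting the Bruhat decomposition of $\tilde G$ together with Lemma~\ref{lem:mono}. Once that identification is in place, the rest of the argument is a routine residue-and-lift.
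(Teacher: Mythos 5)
Your proposal is correct and follows essentially the same route as the paper: pass through $\tilde\rho$ to the residue field, pass through $\phi_{PQ}$ to a minimal parabolic $Q$, identify $St_Q(l)$ with Deodhar's cover, pull back the containment of the kernel in a ``Cartan-like'' subgroup via Lemma~\ref{lem:ker-PQ}, and lift via Lemma~\ref{lem:rho-L}. The step you correctly flag as the main obstacle --- that $\ker\pst_Q(l)\subseteq\tilde L_Q(l)$ --- is precisely what the paper extracts from \cite[Prop.\ 1.16]{Deo}, where Deodhar shows $\ker\pst_Q$ lies in a subgroup $\tilde H$ generated by Cartan-like elements $\tilde h_\alpha(u,u')$ with $\pst_Q(\tilde h_\alpha(u,u'))\in L_Q(l)$, so no independent Bruhat-uniqueness argument is needed.
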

\begin{proof}
Let $Q$ be a minimal parabolic subgroup of $G_l$ contained in $P_l$, $L_Q$ a Levi subgroup of $Q$
contained in $(L_P)_l$. The system of relative roots $\Phi_Q$ and the subgroups $U_{(\alpha)}$, $\alpha\in\Phi_Q$,
in the sense of~\cite{PS}, are identified in this case with the relative root system of $G_l$
and respective root subgroups in the sense of~\cite{BorelTits}, and, accordingly, in~\cite{Deo}. This
readily follows by descent from a field extension that splits $G_l$. Since all irreducible
components of $\Phi_P$ have rank $\ge 2$, by Theorem~\ref{th:PS-normality} one has
$$
E_P(l)=E_{P_l}(l)=E_Q(l)=G_l(l)^+.
$$
If $G_l$ is an absolutely almost simple group,
the construction~\cite[1.9]{Deo} introduces a covering group $\widetilde{G_l}$ of $G_l(l)^+=E_Q(l)$, which is
precisely $St_Q(l)$ in our notation. If $G$ is not absolutely almost simple, then it is a direct product of absolutely almost simple groups,
and we define $\widetilde{G_l}$ to be the direct product of covering groups of the factors.
Thus, we obtain the following commutative diagram consisting of surjective group homomorphisms:
\begin{equation}\label{eq:diag-RI}
\xymatrix@R=20pt@C=40pt{
St_P(R)\ar[r]^{\tilde\rho}\ar[d]^{\pst_P}&St_P(R/I)\ar[r]^{\phi_{PQ}}\ar[d]^{\pst_P}&St_Q(R/I)\ar[d]^{\pst_Q}\\
E(R)=E_P(R)\ar[r]^{\rho}&E_P(R/I)\ar@{=}[r]&E_Q(R/I)\\
}
\end{equation}
Here the map $\phi_{PQ}$ is the one constructed in Lemma~\ref{lem:phiPQ}, the map $\tilde\rho$
is the one from Lemma~\ref{lem:st-ker}.

Deodhar shows in~\cite[Prop. 1.16]{Deo} that $St_Q(R/I)=St_Q(l)$ is a central
extension of $E_Q(R/I)=E_Q(l)$, and moreover
$\ker\pst_Q\subseteq \tilde H$, where $\tilde H$ is a certain subgroup of $St_Q(l)$. Note that
this result does not use the assumption $|l|\ge 16$ present in~\cite[Theorem 1.9]{Deo}; that assumption was required to prove that the central extension
is universal. By definition, the subgroup $\tilde H$ is generated by some elements $\tilde h_\alpha(u,u')$,
where $\alpha\in\Phi_Q$, $1\neq u,u'\in \tilde U_{(\alpha)}(l)$, such that
$\tilde h_\alpha(u,u')\in \tilde U_{(\alpha)}(l)$ and $\pst_Q(\tilde h(u,u'))\in L_Q(l)$.
This implies that $\ker\pst_Q\subseteq \tilde L_Q(l)$.

Let $g\in St_P(R)$ be such that $\pst_P(g)=1$. Then $\phi_{PQ}(\tilde\rho(g))\in\ker\pst_Q\subseteq\tilde L_Q(l)$.
By Lemma~\ref{lem:ker-PQ} this implies that $\tilde\rho(g)\in\tilde L_P(R)$.
Since $\ker\tilde\rho=\tilde E_P(R,I)$ by Lemma~\ref{lem:st-ker}, by Lemma~\ref{lem:rho-L} we have
$g\in\tilde L_P(R)\tilde E_P(R,I)$.
\end{proof}

\begin{proof}[Proof of Theorem~\ref{thm:K2}]
By Lemma~\ref{lem:ker-in-cell} combined with Lemmas~\ref{lem:diag-action} and~\ref{lem:E-cell} we have
$$
\ker(\pst_P:St_P(R)\to E(R))\subseteq \tilde U_P(R)\tilde L_P(R)\tilde U_{P^-}(R).
$$
By Lemma~\ref{lem:st-bigcell} this implies that actually
$$
\ker(\pst_P:St_P(R)\to E(R))\subseteq\tilde L_P(R).
$$
Applying Lemma~\ref{lem:diag-action}, we conclude that
$\ker(\pst_P:St_P(R)\to E(R))$ is contained in the the center of $St_P(R)$.
\end{proof}

\section{Intersection of normal subgroups with relative root subschemes}\label{sec:normal}

\subsection{Root chains in a root system}

Let $\Psi$ be an irreducible, but not necessarily reduced root system in the sense of~\cite{Bu} with an
inner product $(\ ,\ )$. 
We denote by $\NN$ the set of positive integers, and by $\NN_0$ the set of non-negative integers.
For any subsets $S_1,S_2\subseteq\Psi$ we set
\begin{equation*}
\begin{array}{r|l}
[S_1,S_2]=\Psi\,\cap\,\Bigl\{\sum_{i=1}^nc_i\alpha_i+\sum_{j=1}^md_j\beta_j\ &\
\parbox{7.3cm}{$\textstyle m,n\in\NN,\
\alpha_i\in S_1,\ \beta_j\in S_2,\ c_i,d_j\in\NN_0,\\
\sum_{i=1}^nc_i> 0,\ \sum_{j=1}^md_j>0$}\Bigr\}.
\end{array}
\end{equation*}
Note that for any $S\subseteq\Psi$ and any $\alpha\in\Psi$ one has
$$
[S,\alpha]=[S,(\alpha)],
$$
where $(\alpha)=\NN\alpha\cap\Psi$.

\begin{dfn}\label{dfn:good}
Let $\delta,\gamma\in\Psi$ be two roots, $2\gamma\not\in\Psi$.
For a sequence of roots $\beta_1,\ldots,\beta_n\in\Psi$, set
$$
D_0=\{\delta\},\quad D_i=[\ldots[[\delta,\beta_1],\beta_2],\ldots,\beta_i],\quad 1\le i\le n.
$$
The sequence $\beta_1,\ldots,\beta_n$ is called a \emph{good chain between $\delta$ and $\gamma$},
if
\begin{itemize}
\item $\delta_i=\delta+\beta_1+\ldots+\beta_i$ belongs to $\Psi$ for any $1\le i\le n$;
\item $D_n=\{\delta_n\}=\{\gamma\}$;
\item there are indices $0=i_0< i_1< i_2<\ldots < i_k=n$ such that for every $0\le j\le k-1$
 the additive closure of the set $D_{i_j}\cup\{\beta_{i_j+1},\beta_{i_j+2},\ldots,\beta_{i_{j+1}}\}$
inside $\Psi$ is a unipotent closed set of roots, and
\begin{equation}\label{eq:Dij}
\bigl[D_{i_j}\setminus\{\delta_{i_j}\},\
\beta_{i_j+1}\cup [\delta_{i_j},\beta_{i_j+1}]\bigr]=\emptyset.
\end{equation}
In particular, one has $D_{i_j+1}=[D_{i_j},\beta_{i_j+1}]=[\delta_{i_j},\beta_{i_j+1}]$.
\end{itemize}
\end{dfn}

Note that good chains can be concatenated: if
$\beta_1,\ldots,\beta_n$ is a good chain between
$\delta$ and $\gamma$, and $\alpha_1,\ldots,\alpha_m$ is a good chain between
$\gamma$ and $\epsilon$, then
$\beta_1,\ldots,\beta_n,\alpha_1,\ldots,\alpha_m$ is a good chain between $\delta$ and $\epsilon$.

\begin{lem}\label{lem:positive-good}
Let $\Psi$ be an irreducible root system with a system of simple roots $\Pi$, let
$\Psi^+$ be the set
of positive roots with respect to $\Pi$, and $\tilde\alpha\in\Psi^+$ be the unique root
of maximal height with respect to $\Pi$. For any root $\delta\in\Psi^+$ there is a good chain
$\beta_1,\ldots,\beta_n\in\Psi^+$ between $\delta$ and $\tilde\alpha$.
\end{lem}
\begin{proof}
It is well-known that for any positive
root $\delta$ there are simple roots $\beta_1,\ldots,\beta_n\in\Pi$ such that $\delta+\beta_1+\ldots+\beta_i\in\Psi$
for any $1\le i\le n$, and $\delta+\beta_1+\ldots+\beta_n=\tilde\alpha$. Then $\beta_1,\ldots,\beta_n$
is a good chain between $\alpha$ and $\tilde\alpha$ with $k=1$, $i_k=i_1=n$. Indeed,
the additive closure of $D_{i_0}=\{\delta\}$ and $\beta_1,\ldots,\beta_n$
is unipotent, since all these roots are positive, and
$$
D_n=[\ldots[[\delta,\beta_1],\beta_2],\ldots,\beta_n]=\{\tilde\alpha\},
$$
since $\tilde\alpha$ is the root of maximal height with respect to $\Pi$.
\end{proof}

\begin{lem}\label{lem:medium-good}
Let $\Psi$ be a root system of type $BC_l$, $l\ge 2$, and let $\gamma\in\Psi$ be a root of medium length. There exists
a root of maximal length $\delta\in\Psi$ such that there is a good chain between $\delta$ and $\gamma$.
\end{lem}
\begin{proof}
Let $\alpha\in\Psi$ be a root of minimal length (i.e. extra-short), such that $\alpha$ and $\gamma$
consitute a base of simple roots in a root subsystem of type $BC_2$ inside $\Psi$. Take $\delta=2\alpha$.
We claim that $(\beta_1,\beta_2,\beta_3)=(\gamma,-\alpha,-\alpha)$ is a good chain between $\delta$ and $\gamma$ with $k=2$,
$i_1=1$, $i_2=n=3$. Indeed, one has
$$
\begin{array}{l}
{}D_{i_1}=D_1=[2\alpha,\gamma]=\{2\alpha+\gamma,2\alpha+2\gamma\};\\
{}[D_1,\beta_2]=[\{2\alpha+\gamma,2\alpha+2\gamma\},-\alpha]=\{\alpha+\gamma,\gamma,2(\alpha+\gamma)\};\\
{}[D_1\setminus\{\delta+\beta_1\},\beta_2]=[2\alpha+2\gamma,-\alpha]=\emptyset;\\
{}\bigl[D_1\setminus\{\delta+\beta_1\},[\delta+\beta_1,\beta_2]\bigr]=
  \bigl[2\alpha+2\gamma,\{\alpha+\gamma,\gamma,2(\alpha+\gamma)\}\bigr]=\emptyset;\\
{}D_{i_2}=D_3=[\{\alpha+\gamma,\gamma,2(\alpha+\gamma)\},-\alpha]=\{\gamma\}.\\
\end{array}
$$
Note that the roots of $D_{i_0}=D_0=\{2\alpha\}$ and $\beta_1=\gamma$ are positive with respect to the system of simple
roots $\{\alpha,\gamma\}$ of $B_2$, and the roots of $D_{i_1}=D_1$ and $\beta_2=\beta_3=-\alpha$
are positive with respect to the system $\{-\alpha,2\alpha+\gamma\}$. Hence the corresponding additive closures are unipotent.
\end{proof}

\begin{lem}\label{lem:max-good}
Let $\Phi$ be a reduced irreducible root system with a set of simple roots $\Pi$ and Dynkin diagram $D$. Let $J\subseteq\Pi$ and
$\Gamma\le\Aut(D)$ be such that $\Psi=\Phi_{J,\Gamma}$ is isomorphic as a set with partially defined addition
to an irreducible root system, and $\rank\Psi\ge 2$. Let $\tilde a$ and $\tilde\alpha$ denote the roots of maximal
height in $\Phi$ and $\Psi$ respectively.
Let $\sigma\in\pi_{J,\Gamma}(\Pi)\setminus\{0\}$ be a simple root of $\Psi$ such that
$\{-\tilde\alpha,\sigma\}$ is a base of a root subsystem of $\Psi$ of type \emph{(a)} $A_2$ or \emph{(b)} $B_2$.

(i) Depending on whether \emph{(a)} or \emph{(b)} takes place, let
$\beta_1,\ldots,\beta_n$ be the sequence $-\sigma,-\tilde\alpha,-\tilde\alpha+\sigma$ or,
respectively, $-\sigma,-\sigma,-\tilde\alpha+\sigma,-\tilde\alpha+\sigma$.
Then $\beta_1,\ldots,\beta_n$ is a good chain between
$\tilde\alpha$ and $-\tilde\alpha$.

(ii) There are roots
$a_i\in\pi_{J,\Gamma}^{-1}(\beta_i)$, $1\le i\le n$, such that $\tilde a+a_1+\ldots+a_i$ is a root for any
$1\le i\le n$, and $\tilde a+a_1+\ldots+a_n=-\tilde a$.
\end{lem}
\begin{proof}
Let $\tilde D$ be the extended Dynkin digram of $\Phi$. Let $s_0\in J$ be a simple root that is connected to $-\tilde a$
in the graph $\tilde D$ by a (possibly, empty) chain of simple roots from $D\setminus J$. Let $s$ be the sum of $s_0$ and all simple roots
in this chain.
Clearly, $s\in\Phi$ and $(\tilde a,-s)>0$, hence $\tilde a-s\in\Phi$.
Since $|J|\ge 2$, not all simple roots of $\Phi$ are involved in $s$, and therefore $\tilde a-s\in\Phi^+$.
Set $\sigma=\pi_{J,\Gamma}(s)=\pi_{J,\Gamma}(s_0)$. Since $\pi_{J,\Gamma}(\tilde a-s)=\tilde\alpha-\sigma$,
we have $\tilde\alpha-\sigma\in\Psi^+$.
Since $\sigma$ is a simple root in $\Psi$,  one readily sees that
$\tilde\alpha$ and $-\sigma$ generate a root subsystem of type $A_2$ or $B_2$.
Conversely, since $\tilde a\in\pi_{J,\Gamma}^{-1}(\tilde\alpha)$, any simple root $\sigma$ with this property can be constructed
as above.

Case (a). Assume that $\tilde\alpha$ and $-\sigma$ generate a root subsystem of $\Psi$ of type $A_2$.
Then
$$
(\beta_1,\beta_2,\beta_3)=(-\sigma,-\tilde\alpha,-\tilde\alpha+\sigma)
$$
is a good chain between $\tilde\alpha$ and $-\tilde\alpha$ with $k=3$, $i_1=1$, $i_2=2$, $i_3=n=3$. Indeed,
one has
$$
\begin{array}{l}
D_1=[\tilde\alpha,-\sigma]=\{\tilde\alpha-\sigma\};\\
D_2=[\tilde\alpha-\sigma,-\tilde\alpha]=\{-\sigma\};\\
D_3=[-\sigma,-\tilde\alpha+\sigma]=\{-\tilde\alpha\}.
\end{array}
$$
One readily sees that the additive closures of $D_0\cup\{\beta_1\}=\{\tilde\alpha,-\sigma\}$,
$D_1\cup\{\beta_2\}$, and $D_2\cup\{\beta_3\}$ are unipotent sets, namely, the corresponding 3-element
subsets of $A_2$.

Let $c$ be a $\Pi$-minimal root in $\pi_{J,\Gamma}^{-1}(-\sigma)$ that exists by Lemma~\ref{lem:max-min-roots}.
If $S\subseteq D\setminus\{J\setminus s_0\}$ is the connected component containing $s_0$, then
$c$ is a negative linear combination of some simple roots in $S$. Since $-\tilde a$ is adjacent to
a root of $S$ in $\tilde D$, we have $(-\tilde a,c)>0$, or, equivalently, $(\tilde a,c)<0$. Hence
$\tilde a+c\in\pi_{J,\Gamma}^{-1}(\tilde\alpha-\sigma)=\pi_{J,\Gamma}^{-1}(\tilde\alpha+\beta_1)$ is a root, choose $a_1=c$ and
$a_2=-\tilde a$, so that
$\tilde a+a_1+a_2=c$, and then $a_3=-\tilde a-c$.

Case (b). Now assume that $\tilde\alpha$ and $-\sigma$ generate a root subsystem of $\Psi$ of type $B_2$, and
set
$$
(\beta_1,\beta_2,\beta_3,\beta_4)=(-\sigma,-\sigma,-\tilde\alpha+\sigma,-\tilde\alpha+\sigma).
$$
This is a good chain between $\delta=\tilde\alpha$ and $\gamma=-\tilde\alpha$ with $k=3$, $i_1=2$, $i_2=3$, $i_3=n=4$.
Indeed, we have
\begin{equation}\label{eq:spchain(b)}
\begin{array}{l}
{}D_1=[D_0,\beta_1]=[\tilde\alpha,-\sigma]=\{\tilde\alpha-\sigma,\tilde\alpha-2\sigma\};\\
{}D_{i_1}=D_2=[D_1,\beta_2]=[[\tilde\alpha,-\sigma],-\sigma]=\{\tilde\alpha-2\sigma\};\\
{}D_{i_2}=D_3=[D_2,\beta_3]=[\tilde\alpha-2\sigma,-\tilde \alpha+\sigma]=\{-\sigma,-\tilde\alpha\};\\
{}D_n=D_4=[\{-\sigma,-\tilde\alpha\},-\tilde\alpha+\sigma]=[-\sigma,-\tilde\alpha+\sigma]=\{-\tilde\alpha\};\\
{}[D_{i_2}\setminus\{\delta+\beta_1+\beta_2+\beta_3\},\beta_4]=[-\tilde\alpha,-\tilde\alpha+\sigma]=\emptyset;\\
{}\bigl[D_{i_2}\setminus\{\delta+\beta_1+\beta_2+\beta_3\},[\delta+\beta_1+\beta_2+\beta_3,\beta_4]\bigr]
  =[-\tilde\alpha,[-\sigma,-\tilde\alpha+\sigma]]=\emptyset.\\
\end{array}
\end{equation}
The additive closures of $D_0\cup\{\beta_1,\beta_2\}=\{\tilde\alpha,-\sigma\}$ and
$D_2\cup\{\beta_3\}=\{\tilde\alpha-2\sigma,-\tilde\alpha+\sigma\}$ are unipotent, since these sets
are two systems of simple roots in $B_2$. The additive closure of
$D_3\cup\beta_4=\{-\sigma,-\tilde\alpha,-\tilde\alpha+\sigma\}$ is unipotent, since this set is already additively
closed.

Let $S\subseteq\tilde D\setminus\{J\setminus s_0\}$
be the connected component containing $-\tilde a$ and $s_0$, and let $\Delta$ be the root subsystem
of $\Phi$ generated by $S$. Let
$c\in\pi_{J,\Gamma}^{-1}(\tilde\alpha-2\sigma)$ be the root of maximal height in $\Delta$ with respect
to $-S$.
Choose $b\in\Phi^+$ to be the sum of all roots in $-S$, then $\pi_{J,\Gamma}(b)=\pi_{J,\Gamma}(\tilde a-s_0)=
\tilde\alpha-\sigma$.
We set $a_1=b-\tilde a$, $a_2=c-b$,
$a_3=-b$, $a_4=-\tilde a+b-c$. It remains show that all $a_i$ are roots.

Note that the root system $\Phi$ is not of type $A_l$,
because it is only possible in case (a). Then $-\tilde a$ is a leaf of $\tilde D$ and $S$.
Then $b-\tilde a$ is a root.
Considering the extended Dynkin diagram of $\Delta$, we conclude that $(c,b)>0$, and hence $c-b\in\Phi$.

Finally, $-a_4=c-(b-\tilde a)$ is also a root. Indeed, since $c-(b-\tilde a)-\tilde a=c-b$ is a root,
by~\cite[Lemma 1]{PS} we have that if $c-(b-\tilde a)\not\in\Phi$, then necessarily $c-\tilde a\in\Phi$.
Both $\tilde a$ and $c$ are long roots in $\Phi$, hence
if $c-\tilde a\in\Phi$, we have $(\tilde a,c)=1/2$, assuming both $\tilde a$ and $c$ have length 1.
However, $(\tilde a,c)=(c-x,c)=1-(x,c)$, where $x$ is a
positive linear combination of roots in $-S\setminus\{\tilde a\}$. Therefore, $(c,d)>0$ for at least one
root $d$ from $-S\setminus\{\tilde a\}$, which implies $(c,b-\tilde a)>0$ and $c-(b-\tilde a)\in\Phi$.
\end{proof}

\begin{lem}\label{lem:long-good}
Let $\Psi$ be an irreducible root system, $\rank\Psi\ge 2$. Let $\gamma\in\Psi$ be a root of maximal length, or a root
of medium length if $\Psi$ is of type $BC_l$. For any root $\delta\in\Psi$ there is a good chain between $\delta$
and $\gamma$.
\end{lem}
\begin{proof}
Assume first that $\gamma$ is a root of maximal length. Then
there is a choice of a system of simple roots $\Pi\subseteq\Psi$
such that $\gamma=\tilde\alpha$ is the root of maximal height with respect to $\Pi$.
If $\delta$ is positive with respect to $\Pi$, the claim follows from Lemma~\ref{lem:positive-good}.
If $\delta$ is negative with respect to $\Pi$, Lemma~\ref{lem:positive-good} implies that there is a good chain
$\beta_1,\ldots,\beta_n$ between $\delta$ and $-\tilde\alpha$. By Lemma~\ref{lem:max-good} there is a good chain
between $-\tilde\alpha$ and $\tilde\alpha$. The concatenation of $\beta_1,\ldots,\beta_n$ and the latter
chain is a good chain between $\delta$ and $\tilde\alpha$.

Now assume that $\Psi=BC_l$, $l\ge 2$, and $\gamma$ is a root of medium length. By Lemma~\ref{lem:medium-good}
there is a root of maximal length $\alpha\in\Psi$ and a good chain between $\alpha$ and $\gamma$; we concatenate this chain
with a good chain between $\delta$ and $\alpha$ that exists by the above.
\end{proof}

\subsection{Root module homomorphisms over a local ring}\label{sec:local-Erepr}

Throughout this subsection we assume that $G$ is a reductive group
over a local commutative ring $R$. Assume that the structure constants
of the absolute root system $\Phi$ of $G$ are invertible in $R$.
Let $Q$ is a minimal parabolic subgroup of $G$ over $R$, and let $L_Q$ be a Levi subgroup of $Q$. Since $R$ is local,
it is connected, and hence $G$, $Q$ and $L_Q$ are subject to Lemmas~\ref{lem:relroots} and~\ref{lem:relschemes}.
We denote by $\Phi_Q=\Phi(S,G)$ the corresponding system of relative roots, and by
$$
\pi_Q=\pi_{J,\Gamma}:\ZZ\Phi\to\ZZ\Phi_Q
$$
the corresponding surjection. Observe that in this context $\Phi_Q$ is the root system of $G$ with respect to $S$
in the sense of~\cite[Exp. XXVI, \S 7]{SGA3}. In particular, $\Phi_Q$ is a root system in the sense of~\cite{Bu}.

\begin{dfn}\label{def:N-map}
For any $\alpha_1,\ldots,\alpha_m\in\Phi_Q$ define the multilinear map
$$
N_{\alpha_1,\ldots,\alpha_m}:V_{\alpha_1}\times V_{\alpha_2}\times\ldots\times V_{\alpha_m}\to V_{\alpha_1+\ldots+\alpha_m}
$$
by the following inductive formula:
$N_{\alpha_1}=\id$, and for any
$2\le i\le m$
$$
N_{\alpha_1,\ldots,\alpha_i}(v_1,\ldots,v_i)=N_{\alpha_1+\ldots+\alpha_{i-1},\alpha_i,1,1}\bigl(
N_{\alpha_1,\ldots,\alpha_{i-1}}(v_1,\ldots,v_{i-1}),v_i\bigr),
$$
where $N_{\alpha,\beta,1,1}:V_\alpha\times V_\beta\to V_{\alpha+\beta}$, $\alpha,\beta\in\Phi_Q$,
is the bilinear map of~\eqref{eq:Chev}.
\end{dfn}

\begin{lem}\label{lem:chain-comm}
Let $\delta,\gamma\in\Phi_Q$ be two roots such that $2\gamma\not\in\Phi_Q$, and let $\beta_1,\ldots,\beta_n$ be a good chain between $\delta$ and $\gamma$.
Then for any $(u,v_1,\ldots,v_n)\in V_\delta\times V_{\beta_1}\times\ldots\times V_{\beta_n}$
one has
$$
\bigl[\ldots\bigl[\bigl[X_{\delta}(u),X_{\beta_1}(v_1)\bigr],
X_{\beta_2}(v_2)\bigr],
\ldots,X_{\beta_n}(v_n)\bigr]=X_{\gamma}\bigl(N_{\delta,\beta_1,\ldots,\beta_n}(u,v_1,\ldots,v_n)\bigr).
$$
\end{lem}
\begin{proof}
Note that if $A_1,A_2\subseteq\Phi_Q$ are two unipotent closed sets of roots both
contained in a larger unipotent closed set, then $[A_1,A_2]$
is also a unipotent closed set, and the Chevalley
commutator formula~\eqref{eq:Chev} implies that
\begin{equation}\label{eq:commset}
[U_{A_1},U_{A_2}]\subseteq U_{[A_1,A_2]}.
\end{equation}
We will use the notation of Definition~\ref{dfn:good} for the chain $\beta_1,\ldots,\beta_n$. In particular,
set $\delta_0=\delta$, $\delta_n=\gamma$.
Fix an index $j$, $0\le j\le k-1$. The additive closure of $D_{i_j}$,
$\beta_{i_j+1},\ldots,\beta_{i_{j+1}}$ is a unipotent set of roots. Therefore, there is a system of simple
roots $\Pi_j$ in $\Phi_Q$ such that all these roots are positive with respect to $\Pi_j$. An additively closed
subset of a unipotent set is also unipotent, hence $D_{i_j}$ is a unipotent closed set, and
for any $i_j< p\le i_{j+1}$ the set
$$
D_p=[\ldots[D_{i_j},\beta_{i_j+1}],\ldots,\beta_p]=[\ldots[D_{i_j},\beta_{i_j+1}],\ldots,\beta_p]=
[\ldots[\delta_{i_j},\beta_{i_j+1}],\ldots,\beta_p]
$$
is also a unipotent closed set of roots.
Note that the root $\delta_p=\delta_{i_j}+\beta_{i_j+1}+\ldots+\beta_p$ is a root of minimal height
with respect to $\Pi_j$ in the set $D_p$, since height is
consistent with addition of roots. In particular, $D_p\setminus\delta_p$ is additively closed, and
$$
[D_{p-1}\setminus\delta_{p-1},\beta_p]\subseteq D_p\setminus\{\delta_{p-1}+\beta_p\}=D_p\setminus\{\delta_p\}.
$$

Any element $x\in U_{D_{i_j}}$ satisfies
$x\in X_{\delta_{i_j}}(v)\cdot U_{D_{i_j}\setminus\{\delta_{i_j}\}}(R)$,
where $v\in V_{\delta_{i_j}}$.
By the equality~\eqref{eq:Dij} of Definition~\ref{dfn:good} and the generalized Chevalley commutator formula~\eqref{eq:Chev}
we have
$$
\begin{array}{rcl}
[x,X_{\beta_{i_j+1}}(v_{i_j+1})]&=&[X_{\delta_{i_j}}(v),X_{\beta_{i_j+1}}(v_{i_j+1}))]\\
&\in& X_{\delta_{i_j}+\beta_{i_j+1}}\bigl(N_{\delta_{i_j},\beta_{i_j+1},1,1}(v,v_{i_j+1})\bigr)\cdot U_{D_{i_j+1}
\setminus\{\delta_{i_j}+\beta_{i_j+1}\}}(R).
\end{array}
$$
Then, applying~\eqref{eq:commset}, we deduce that for all $i_j< p\le i_{j+1}$
\begin{equation*}
[\ldots[x,X_{\beta_{i_j+1}}(v_{i_j+1})],\ldots],X_{\beta_p}(v_p)]
\in
X_{\delta_p}\bigl(N_{\delta_{i_j},\beta_{i_j+1},\ldots,\beta_p}(v,v_{i_j+1},\ldots,v_p)\bigr)\cdot
U_{D_p\setminus\{\delta_p\}}(R).
\end{equation*}
Combining these inclusions for all $0\le j\le k-1$ and $p=i_{j+1}$, and using
the equality $D_{i_k}=D_n=\{\delta_n\}=\{\gamma\}$,
we immediately obtain the claim of the lemma.
\end{proof}

\begin{dfn}
For any $\alpha,\beta\in\Phi_Q$, we say that an $R$-linear homomorphism of $R$-modules $\phi:V_\alpha\to V_\beta$
is \emph{$E$-representable}, if there exist a good chain $\beta_1,\ldots,\beta_n$ between $\alpha$ and $\beta$
in the sense of Definition~\ref{dfn:good},
and elements $v_i\in V_{\beta_i}$,
$1\le i\le n$, such that
$$
\phi(-)=N_{\alpha,\beta_1,\ldots,\beta_n}(-,v_1,\ldots,v_n):V_\alpha\to V_\beta.
$$
\end{dfn}

Note that a composition of $E$-representable homomorphisms is also $E$-representable, since a concatenation of
good chains is a good chain.

\begin{lem}\label{lem:E-repr-long}
Let $\gamma$ be a root of maximal length in an irreducible component of $\Phi_Q$ of rank $\ge 2$. Then any $R$-linear endomorphism
of the $R$-module $V_\gamma$ is a finite sum of $E$-representable $R$-linear endomorphisms.
\end{lem}
\begin{proof}
First we prove the claim for the case where $\gamma=\tilde\alpha$ is a  positive root of maximal height in an irreducible
component of $\Phi_Q$.
Let $\alpha_1,\ldots,\alpha_n\in\Phi_Q$
be the good chain from $\tilde\alpha$ to $-\tilde\alpha$ constructed in Lemma~\ref{lem:max-good}.
Then, clearly, $-\alpha_1,\ldots,-\alpha_n$ is a good chain between $-\tilde\alpha$ and $\tilde\alpha$,
and $\alpha_1,\ldots,\alpha_n,-\alpha_1,\ldots,-\alpha_n$ is a good chain between $\tilde\alpha$
and $\tilde\alpha$. Let
$$
N_{\tilde\alpha,\alpha_1,\ldots,\alpha_n,-\alpha_1,\ldots,-\alpha_n}:
V_{\tilde\alpha}\times\prod_{i=1}^n V_{\alpha_i}\times\prod_{i=1}^n V_{-\alpha_i}\to V_{\tilde\alpha}
$$
be the multilinear map of Definition~\ref{def:N-map}.
It induces an $R$-linear map
$$
F:\bigotimes_{i=1}^n V_{\alpha_i}\otimes\bigotimes_{i=1}^n V_{-\alpha_i}\to \End_R(V_{\tilde\alpha}).
$$
The images of elementary tensors under $F$ are $E$-representable by definition.
We are going to show that
$F$ is surjective. In order to do that, by faithfully flat descent we can assume that $G$ is split
and $L_Q$ contains a split maximal subtorus as in~Lemma~\ref{lem:relschemes}, although $R$ is no longer a local ring.
Let $x_a(\xi)$, $a\in\Phi$, $\xi\in R$, denote the elementary root unipotents in $G(R)$.
For any $\alpha\in\Phi_Q$ and any $a\in\pi_Q^{-1}(\alpha)$ by Lemma~\ref{lem:relschemes} there is a unique $v_a\in V_\alpha$
such that
$$
x_a(1)\in X_\alpha(v_a)X_{2\alpha}(V_{2\alpha}).
$$
Here, as usual, we assume that $X_{2\alpha}(V_{2\alpha})=1$ if $2\alpha\not\in\Phi_Q$.
Moreover, for all $\alpha,\beta\in\Phi_Q$ such that $n\alpha\neq-m\beta$ for all $n,m\in\NN$,
combining the usual Chevalley commutator formula with the definition of $N_{\alpha\beta 11}$ we conclude that
\begin{equation}\label{eq:vavb}
N_{\alpha\beta 11}(v_a,v_b)=N_{ab}\cdot v_{a+b},
\end{equation}
where $N_{ab}\in R^\times$ is the corresponding structure constant of $\Phi$.

By Lemma~\ref{lem:max-good} (ii) combined with Lemma~\ref{lem:beta-correct}, for any $a\in\pi_Q^{-1}(\tilde\alpha)$ there
is a sequence of roots $a_i\in\pi_Q^{-1}(\alpha_i)$ such that all sums $a+a_1+\ldots+a_i$ are roots
and $a+a_1+\ldots+a_n=-\tilde a$.
Fix a pair of roots $a,a'\in\pi_Q^{-1}(\tilde\alpha)$ and consider the corresponding
sequences of roots $a_1,\ldots,a_n$ and $a'_1,\ldots,a'_n$.
By~\eqref{eq:vavb} we conclude that
$$
N_{\tilde\alpha,\alpha_1,\ldots,\alpha_n,-\alpha_1,\ldots,-\alpha_n}
\bigl(v_a,v_{a_1},\ldots,v_{a_n},v_{-a'_1},\ldots,v_{-a'_n}\bigr)=n(a,a')\cdot v_{a'},
$$
where $n(a,a')\in R$ is a product of structure constants $\pm 1,\pm 2,\pm 3$ of $\Phi$. By our assumption
the structure constants are invertible, hence $n(a,a')\in R^\times$.

Note that for any $b\in\pi_Q^{-1}(\tilde\alpha)$ such that
$\height(b)\le\height(a)$, we have $\height(b+a_1+\ldots+a_n)\le\height(-\tilde a)$, which implies
that either $b=a$, or $b+a_1+\ldots+a_n$ is not a root. Hence for any $b\neq a$ satisfying $\height(b)\le\height(a)$,
we have
$$
N_{\tilde\alpha,\alpha_1,\ldots,\alpha_n,-\alpha_1,\ldots,-\alpha_n}
\bigl(v_b,v_{a_1},\ldots,v_{a_n},v_{-a'_1},\ldots,v_{-a'_n}\bigr)=0.
$$
On the other hand, for any $b\in\pi_Q^{-1}(\tilde\alpha)$ such that $\height(b)>\height(a)$, if
$$
b+a_1+\ldots+a_n-a'_1-\ldots-a'_n=b-a+a'
$$ is a root, then
$$
N_{\tilde\alpha,\alpha_1,\ldots,\alpha_n,-\alpha_1,\ldots,-\alpha_n}
\bigl(v_b,v_{a_1},\ldots,v_{a_n},v_{-a'_1},\ldots,v_{-a'_n}\bigr)=m(b,a,a')\cdot v_{b-a+a'}
$$
for some $m(b,a,a')\in R^\times$, and, clearly, $\height(b-a+a')>\height(a')$.

The vectors $v_a$, $a\in\pi_Q^{-1}(\tilde\alpha)$, constitute  a basis of $V_{\tilde\alpha}$. Let
$e_1,\ldots,e_k$, $k=|\pi_Q^{-1}(\tilde\alpha)|$, be the list of these vectors in any order
coherent with the height of roots. Observations in the previous paragraph imply that
the matrix of $F\bigl(v_{a_1},\ldots,v_{a_n},v_{-a'_1},\ldots,v_{-a'_n}\bigr)$ in this basis has an invertible entry
in the position $(i_0,j_0)$, where $e_{i_0}=v_{a'}$ and $e_{j_0}=v_a$, while all other non-zero entries of
this matrix
are in positions $(i,j)$ with $j>j_0$ and $i>i_0$. This readily implies that the $R$-module
$F(V_{\alpha_1},\ldots,V_{\alpha_n},V_{-\alpha_1},\ldots,V_{-\alpha_n})$ contains all
elementary endomorphisms of $V_{\tilde\alpha}$ whose matrices have just one entry equal to 1, and all other entries equal to $0$.
Hence $F(V_{\alpha_1},\ldots,V_{\alpha_n},V_{-\alpha_1},\ldots,V_{-\alpha_n})=\End_R(V_{\tilde\alpha})$.

It remains to observe that the above proof holds verbatim for an arbitrary root $\gamma\in\Phi_Q$ of
maximal length lying in the same irreducible component as $\tilde\alpha$.
Indeed, the root $\gamma$ is mapped to $\tilde\alpha$ by an element $w$ of the Weyl group of $\Phi_Q$.
Since $Q$ is a minimal parabolic subgroup of $G$,
any element of this Weyl group is induced by an element $w'$ of the Weyl group of $\Phi$ via the
projection $\pi_Q$ by~\cite[Th\'eor\`eme 7.13]{SGA3}. Then $w$ maps the good chain between $\tilde\alpha$ and $-\tilde\alpha$
employed above to a good chain between $\gamma$ and $-\gamma$, and, thanks to the existence of $w'$,
the latter chain would satisfy all the same properties stated in Lemma~\ref{lem:max-good}.
\end{proof}

\begin{lem}\label{lem:E-repr-alltomedlong}
Let $\delta,\gamma\in\Phi_Q$ be two roots in the same irreducible component of rank $\ge 2$, and let $\phi:V_\delta\to V_\gamma$ be any homomorphism of $R$-modules.

(i) For any good chain between $\delta$ and $\gamma$,
$\phi$ is a finite sum
of $R$-linear homomorphisms that are compositions of an $E$-representable homomorphism represented by this chain,
and an $R$-linear endomorphism of $V_\gamma$.

(ii) For any good chain between $\delta$ and $\gamma$,
$\phi$ is a finite sum
of $R$-linear homomorphisms that are compositions of an $R$-linear endomorphism of $V_\delta$
and an $E$-representable homomorphism represented by this chain.

(iii) Assume that $\gamma$ is a root of maximal length in $\Phi_Q$, or a root of medium length if
it belongs to an irreducible component of $\Phi_Q$ of type $BC_l$.
Then $\phi$ is a finite sum of $E$-representable
homomorphisms.
\end{lem}
\begin{proof}
All three statements can be rephrased in terms of surjectivity of certain homomorphisms of $R$-modules as in the proof of
Lemma~\ref{lem:E-repr-long}, and hence it is enough to prove them locally in the faithfully flat topology on $\Spec R$. Thus,
we can assume that $G$ is split
and $L_Q$ contains a split maximal subtorus as in~Lemma~\ref{lem:relschemes}, although $R$ is no longer a local ring.
Let $x_a(\xi)$, $a\in\Phi$, $\xi\in R$, denote the elementary root unipotents in $G(R)$.
We also use the same notation $v_a\in V_\alpha$, $a\in\pi^{-1}_Q(\alpha)$, for vectors
satisfying $x_a(1)\in X_\alpha(v_a)X_{2\alpha}(V_{2\alpha}).$

(i) Let $\beta_1,\ldots,\beta_n\in\Phi_Q$ be a good chain between
$\delta$ and $\gamma$. By~\cite[Lemma 4]{PS} for any fixed root $a_0\in\pi_Q^{-1}(\delta)$ there
are roots $b_i\in\pi_Q^{-1}(\beta_i)$, $1\le i\le n$, such that each sum $a_0+b_1+b_2+\ldots+b_i$
is a root.
Clearly, the vectors $v_a$, $a\in\pi_Q^{-1}(\delta)$, and $v_b$, $b\in\pi_Q^{-1}(\gamma)$,
are basis vectors for $V_\delta$ and $V_{\gamma}$ respectively.
For any $a\in \pi_Q^{-1}(\delta)$ such that $a+b_1+\ldots+b_i$ is a root for all $i$, we deduce from~\eqref{eq:vavb} that
$$
N_{\delta,\beta_1,\ldots,\beta_n}(v_a,v_{b_1},\ldots,v_{b_n})=m(a)\cdot v_{a+b_1+\ldots+b_n}
$$
 for some $m(a)\in R^\times$. If $a+b_1+\ldots+b_i$ is not a root for some $i$, then
$$
N_{\delta,\beta_1,\ldots,\beta_n}(v_a,v_{b_1},\ldots,v_{b_n})=0.
$$
Clearly, all non-zero vectors $v_{a+b_1+\ldots+b_n}$, $a\in\pi_Q^{-1}(\delta)$, are distinct
and belong to a basis of $V_{\alpha+\beta_1+\ldots+\beta_n}=V_\gamma$. Therefore, composing $N_{\delta,\beta_1,\ldots,\beta_n}(-,v_{b_1},\ldots,v_{b_n})$
with a suitable endomorphism of $V_\gamma$, we may obtain
any $R$-linear homomorphism
$\phi=\phi_{a_0}:V_\delta\to V_{\gamma}$ that takes a prescribed value on $v_{a_0}$, and maps all other basis vectors
$v_a\in V_\delta$, $a\neq a_0$, to $0$. This settles (i).

(ii) Let $b_0\in\pi_Q^{-1}(\gamma)$ be any root. By~\cite[Lemma 4]{PS} there are roots $a\in\pi_Q^{-1}(\delta)$,
$b_i\in\pi_Q^{-1}(\beta_i)$ such that $a+b_1+\ldots+b_i$ is a root for all $1\le i\le n$,
and $a+b_1+\ldots+b_n=b_0$.
Again, by the Chevalley commutator formula we have
$$
N_{\delta,\alpha_1,\ldots,\alpha_n}(v_a,v_{b_1},\ldots,v_{b_n})=
m(b_0)\cdot v_{b_0}
$$
for some $m(b_0)\in R^\times$. Therefore, for any fixed vector of the form $v_{b_0}$, $b_0\in\pi_Q^{-1}(\gamma)$,
there is a  homomorphism $V_\delta\to V_\gamma$ represented by the chain $\beta_1,\ldots,\beta_n$,
that takes some basis vector $v_a$ of $V_\delta$ to $v_{b_0}$. Precomposing with an endomorphism of $V_\delta$,
we can obtain a homomorphism that sends all other basis vectors to $0$.
Since such vectors $v_{b_0}$ generate $V_\gamma$, (ii) is settled.

(iii) By Lemma~\ref{lem:long-good} there is
a good chain from $\delta$ to $\gamma$.
If $\delta$ is a root of maximal length, then all elements of $\End_R(V_\delta)$ are finite sums of
$E$-representable homomorphisms by Lemma~\ref{lem:E-repr-long}. Since composition of $E$-representable
homomorphisms is $E$-representable, the claim of (iii) then follows from (ii). Now let $\delta$ be arbitrary.
By Lemma~\ref{lem:long-good} there is a good chain between $\delta$ and any root $\alpha$ of maximal length.
Since all $R$-modules involved are free, any $R$-linear homomorphism $V_\delta\to V_\gamma$ is a sum of homomorphisms
that factor through $V_\alpha$. Now the claim of (iii) for such $\delta$ follows from (i) and the previous case.
\end{proof}

\begin{lem}\label{lem:E-repr-short}
Let $\gamma$ be a short root in an irreducible component $\Psi$ of $\Phi_Q$ of type $B_l$, $C_l$ {\rm ($l\ge 2$)}, or $F_4$.
 Let $\beta\in\Phi_Q$ be a long root such that $\gamma,\beta$
form a basis of a root subsystem of $\Phi_Q$ of type $B_2$.
For any root $\delta\in\Psi$, any $R$-linear homomorphism $V_\delta\to V_\gamma$
is a finite sum of homomorphisms $\phi:V_\delta\to V_\gamma$ satisfying the following:
there are $u\in V_{-(\gamma+\beta)}$ and $R$-linear homomorphisms $\psi_1:V_\delta\to V_{\beta+2\gamma}$,
$\psi_2:V_\delta\to V_{-\beta}$ such that for any $v\in V_\delta$ one has
$$
X_\gamma(\phi(v))=X_\gamma\bigl(N_{\beta+2\gamma,-(\beta+\gamma),1,1}(\psi_1(v),u)\bigr)=[X_{\beta+2\gamma}(\psi_1(v)),X_{-(\beta+\gamma)}(u)]\cdot X_{-\beta}(\psi_2(v)),
$$
and $\psi_1$, $\psi_2$ are finite sums of $E$-representable homomorphisms.
\end{lem}
\begin{proof}
It is clear that any $R$-linear homomorphism $V_\delta\to V_\gamma$ is a finite sum of $R$-linear homomorphisms
that factor through $V_{\beta+2\gamma}$, since the latter is a free $R$-module of non-zero dimension.
By Lemma~\ref{lem:E-repr-alltomedlong} (iii) any $R$-linear homomorphism $V_\delta\to V_{\beta+2\gamma}$ is
a finite sum of $E$-representable ones, hence we can assume that $\delta=\beta+2\gamma$ from the start.
By the generalized Chevalley commutator formula~\eqref{eq:Chev} for any $w\in V_{\beta+2\gamma}$, $u\in V_{-(\beta+\gamma)}$
one has
$$
X_{\gamma}\bigl(N_{\beta+2\gamma,-(\beta+\gamma),1,1}(w,u)\bigr)=[X_{\beta+2\gamma}(w),X_{-(\beta+\gamma)}(u)]\cdot
X_{-\beta}\bigl(-N_{\beta+2\gamma,-(\beta+\gamma),1,2}(w,u)\bigr).
$$
By Lemma~\ref{lem:const} the image of the bilinear map $N_{\beta+2\gamma,-(\beta+\gamma),1,1}$ generates
$V_\gamma$. Since
by Lemma~\ref{lem:E-repr-long} any element of $\End_R(V_{\beta+2\gamma})$ is a finite sum of
$E$-representable homomorphisms, we conclude that any $R$-linear homomorphism $V_{\beta+2\gamma}\to V_\gamma$
is a finite sum of homomorphisms of the form $\phi(-)=N_{\beta+2\gamma,-(\beta+\gamma),1,1}(\psi_1(-),u)$,
for some $E$-representable $\psi_1\in\End_R(V_{2\gamma+\beta})$ and some $u\in V_{-(\gamma+\beta)}$.
The proof is now finished by observing that $\psi_2(-)=N_{\beta+2\gamma,-(\beta+\gamma),1,2}(\psi_1(-),u)$ is
an $R$-linear homomorphism $V_{\beta+2\gamma}\to V_{-\beta}$, and hence a finite sum of $E$-representable  homomorphisms
by Lemma~\ref{lem:E-repr-alltomedlong} (iii).
\end{proof}

\subsection{Localization lemmas}
Next we prove several statements that allow to pass from the local ring case to the non-local ring case.
Throughout this subsection, $G$ is a reductive group scheme over a connected
commutative ring $R$, and $P$ is a parabolic $R$-subgroup
of $G$. We do not impose any invertibility conditions on $R$. We denote by
$\Phi_P$ be the system of relative roots for $P$, and by $X_{\alpha}(V_\alpha)$, $\alpha\in\Phi_P$,
the relative root subschemes of $G$ with respect to $P$.

For any $\kappa\in R$ we denote by $R_\kappa$ the localization of $R$ at $\kappa$, and by
$$
F_\kappa:R\to R_\kappa
$$
the localization homomorphism, as well
as the induced homomorphism $G(R)\to G(R_\kappa)$. We also work with elements of $G(R[Y])$
and $G(R[Y,Z])$, where $R[Y]$ and $R[Y,Z]$ are the rings of polynomials over $R$
in one and two variables respectively. We identify $G(R)$ with a subset
of $G(R[Y])$ or $G(R[Y,Z])$ via the natural ring embeddings. The homomorphism $F_\kappa$ naturally extends to these groups.
Similarly, for any maximal ideal $m$ of $R$ we denote by
$$
F_m:R\to R_m
$$ the localization homomorphism, as well
as the induced homomorphism $G(R)\to G(R_m)$.

\begin{lem}\label{lem:PS-12-cor}
Let $m$ be a maximal ideal of $R$, let $Q\le P_{R_m}$ be a parabolic $R_m$-subgroup of $G_{R_m}$ with a system
of relative roots $\Phi_Q$ and relative root subschemes $X_\alpha(V_\alpha)$, $\alpha\in\Phi_Q$.
For any $\beta\in\Phi_Q$ and any $u\in V_\beta$ there is $\lambda\in R\setminus m$ such that
$X_\beta(\lambda YRu)\subseteq F_m\bigl(E_P(YR[Y])^{E_P(R)}\bigr)$.
\end{lem}
\begin{proof}
By Lemma~\ref{lem:lemma12} (i) combined with Lemma~\ref{lem:PQ-relroots} there are  $k>0$,
relative roots
$\alpha_i,\alpha_{ij}\in\Phi_P$, elements $u_i\in V_{\alpha_i}\otimes R_m$,  $u_{ij}\in V_{\alpha_{ij}}\otimes R_m$, and integers
$k_i,n_i,l_{ij}>0$ {\rm ($1\le i\le l$, $1\le j\le l_i$)},
which satisfy the equality
$$
X_\beta(YZ^k u)=\prod\limits_{i=1}^l X_{\alpha_i}(Y^{k_i}Z^{n_i} u_{i})^{\ds\mbox{$\prod\limits_{j=1}^{l_i}$}
X_{\alpha_{ij}}(Z^{l_{ij}} u_{ij})}
$$
as elements of $G(R_m[Y,Z])$.
Pick $\eta\in R\setminus m$ such that $\eta u_i\in V_{\alpha_i}$ and $\eta u_{ij}\in V_{\alpha_{ij}}$
for all $i$ and $j$, then $X_\beta(\eta^kYu)\in F_m\bigl(E_P(YR[Y])^{E(R)}\bigr)$. Replacing $Y$ by an arbitrary $R$-multiple
finishes the proof.
\end{proof}

The following standard trick allows to make preimages under localization more tractable.

\begin{lem}\label{lem:loc-inj}
Let $A$ be any commutative ring, let $H$ be a reductive group scheme over $A$, and
let $\kappa\in A$ be any element. For any $g(Y),h(Y)\in H(A[Y],YA[Y])$ such that $F_\kappa(g(Y))=F_\kappa(h(Y))$ there
is $n\ge 1$ such that $g(\kappa^nY)=h(\kappa^nY)$.
\end{lem}
\begin{proof}
Since $A$ is an inductive limit of finitely generated subrings, and $H$ commutes with inductive limits,
we can assume that $A$ is noetherian.
Since $A$ is noetherian, there is an integer $n\ge 0$ such that
$F_\kappa:\kappa^n A\to A_\kappa$ is injective. By~\cite[Theorem 3.1]{Thomason} $H$ has an exact linear representation
over $A$. Hence $F_\kappa:H(A[Y],\kappa^n A[Y])\to H(A_\kappa[Y])$
is injective as well. Then $g(\kappa^nY)=h(\kappa^nY)$.
\end{proof}

Sometimes we need the following corollary of Lemma~\ref{lem:loc-inj}.

\begin{lem}\label{lem:local-polyinclusion}
Let $m$ be a maximal ideal of $R$. Fix any
$\alpha\in \Phi_P$, $u\in V_\alpha$, and a subset $S\subseteq G(R)$. If
$F_m\bigl(X_\alpha(Yu)\bigr)\in F_m\bigl(\bigl[S^{E(R[Y])},E_P(R[Y],YR[Y])\bigr]\bigr)$ inside $G(R_m[Y])$,
then there is $\lambda\in R\setminus m$ such that
$X_\alpha(\lambda Ru)\subseteq S^{E(R)}$ inside $G(R)$.
\end{lem}
\begin{proof}

Clearly, there is $\kappa\in R\setminus m$ such that
\begin{equation*}
F_\kappa\bigl(X_{\alpha}(Y\cdot u)\bigr)\in F_\kappa\bigl(\bigl[S^{E(R[Y])},E_P(R[Y],YR[Y])\bigr]\bigr).
\end{equation*}
By Lemma~\ref{lem:loc-inj} there is $n\ge 1$ such that
\begin{equation*}
X_{\alpha}(\kappa^n Y u)\in \bigl[S^{E(R[\kappa^n Y])},E_P(R[\kappa^n Y],\kappa^n YR[\kappa^n Y])\bigr]\subseteq S^{E(R[Y])}.
\end{equation*}
Once we set $\lambda=\kappa^n$ and make $Y$ run over $R$, we obtain the claim of the lemma.
\end{proof}

The following generalization of Lemma~\ref{lem:PS-12-cor} will be used in~\S~\ref{sec:main-proof}.

\begin{lem}\label{lem:nu-lift}
Assume the setting of Lemma~\ref{lem:PS-12-cor}, and assume moreover that $R$ is noetherian.

(i) For any finite set of elements $g_1(Y),\ldots,g_k(Y)\in E_Q(YR_m[Y])$
there are $\nu\in R\setminus m$, and elements $h_1(Y),\ldots,h_k(Y)\in E_P(YR[Y])^{E(R)}$
such that $F_m(h_i(Y))=g_i(\nu Y)$ for all $1\le i\le k$.

(ii) If $\nu'\in R\setminus m$ and $h_1'(Y),\ldots,h'_k(Y)$ is another such datum, then there are $\nu_1,\nu_2\in R\setminus m$ such that
$h_i(\nu_1 Y)=h_i'(\nu_2 Y)$ for all $1\le i\le k$.

(iii) If, moreover, $g_1(Y)g_2(Y)\ldots g_k(Y)=1$,
then one can choose $\nu$ and $h_i(Y)$ so that $h_1(Y)h_2(Y)\ldots h_k(Y)=1$.
\end{lem}
\begin{proof}
Since $E_Q(YR_m[Y])$ is generated by $X_\alpha(Y^Nv)$, $\alpha\in\Phi_Q$, $N\ge 1$, $v\in V_\alpha$, clearly, we can assume that
each $g_i(Y)$ is of this form.
By Lemma~\ref{lem:PS-12-cor} there is $\lambda\in R\setminus m$ such that
$X_\alpha(\lambda Y v)\in F_m\bigl(E_P(YR[Y])^{E_P(R)}\bigr)$ inside $G(R_m[Y])$.
Since $W(V_\alpha)$ is a finitely presented $R_m$-scheme,
there is $\mu\in R\setminus m$ such that $V_\alpha$ and $X_\alpha:W(V_\alpha)\to G_{R_m}$ are defined already over
$R_\mu$, and $X_\alpha(\lambda Y v)$ lifts to an element of $F_\mu\bigl(E_P(YR[Y])^{E_P(R)}\bigr)\subseteq G(R_\mu[Y])$.
Now let $\kappa\in R\setminus m$ be any element divided by $\lambda\mu$, i.e. $\kappa=\lambda\mu\kappa'$. Replace $Y$
by $\mu\kappa' Y$,  then, since $R_\mu\to R_m$ factors through $R_\kappa\to R_m$, we have
$$
X_\alpha(\kappa Y v)\in F_\kappa\bigl(E_P(\mu\kappa' YR[Y])^{E_P(R)}\bigr)\subseteq
F_\kappa\bigl(E_P( YR[Y])^{E_P(R)}\bigr)
$$
inside $G(R_\kappa[Y])$. Since $R$ is noetherian, there is $n\ge 1$
such that $F_\kappa:\kappa^n R\to R_\kappa$ is injective, and hence $F_\kappa:G(R[Y],\kappa^n R[Y])\to G(R_\kappa[Y])$ is injective as well.
Set $F_{\kappa,n}=F_\kappa|_{G(R[Y],\kappa^n R[Y])}$ for short.
Once we replace $Y$ by $\kappa^n Y^N$, we obtain a correctly defined element
$$
h(Y)=(F_{\kappa,n})^{-1}\bigl(X_\alpha(\kappa^{n+1} Y^N v)\bigr)\in E_P(\kappa^nY^NR[Y])^{E(R)}.
$$
Set $\nu=\kappa^{n+1}$, then
$$
F_m(h(Y))=F_{mR_\kappa}\circ F_\kappa(h(Y))=X_\alpha(\nu Y^N v),
$$
as required. Observe that by construction $\kappa$ can be replaced by an arbitrary multiple, hence we can choose
an element $\nu$ suitable for all elements $g_i(Y)$, $1\le i\le k$.
  The
injectivity of $F_{\kappa,n}$ implies that if $g_1(Y)g_2(Y)\ldots g_k(Y)=1$, then $h_1(Y)h_2(Y)\ldots h_k(Y)=1$ as well.

It remains to prove (ii). Assume that $h_i'(Y)$ is another lifting of $g_i(Y)$ corresponding to $\nu'$. Then
$$
F_m(h_i'(\nu Y))=g_i(\nu\nu' v)=F_m(h_i(\nu' Y)).
$$
Then there is $\mu\in R\setminus m$ such that $F_\mu(h_i'(\nu Y))=F_\mu(h_i(\nu' Y))$ for all $1\le i\le k$.
By Lemma~\ref{lem:loc-inj} there is $n\ge 1$ such that
$h_i'(\nu\mu^n Y)=h_i(\nu'\mu^n Y)$, as required.
\end{proof}

\subsection{Proof of Theorem~\ref{thm:E-normal}}\label{sec:proof-th-normal}

We assume that we are in the setting of Theorem~\ref{thm:E-normal}. In particular, $R$ is a connected ring,
the absolute root system type $\Phi$ of $G$ is irreducible,
and the structure constants of $\Phi$ are invertible in $R$. We
fix a parabolic subgroup $P$ of $G$ over $R$, and hence we are given a system of relative roots
$\Phi_P$ and relative root subschemes $X_{\alpha}(V_\alpha)$, $\alpha\in\Phi_P$, over $R$.

We also  fix an $E(R)$-normalized
subgroup $N$ of $G(R)$.
For any $\alpha\in\Phi_P$ we define subsets $M_\alpha\subseteq V_\alpha$ by the equality
$$
N\cap X_\alpha(V_\alpha)=X_\alpha(M_\alpha).
$$

In the following lemmas $m$ always denotes an arbitrary maximal ideal of $R$, $Q\le P_{R_m}$ denotes
a minimal parabolic $R_m$-subgroup of $G_{R_m}$, provided with a system
of relative roots $\Phi_Q$ and relative root subschemes $X_\alpha(V_\alpha)$, $\alpha\in\Phi_Q$.
Note that since $Q$ is minimal, $\Phi_Q$ is a root system in the sense of Bourbaki. Since $\Phi$ is irreducible,
$\Phi_Q$ is also irreducible. Summing up, $Q$ and $X_\delta(V_\delta)$, $\delta\in\Phi_Q$, are subject
to~\S~\ref{sec:local-Erepr}.

\begin{lem}\label{lem:Erepr-PQ}
For any $\delta,\gamma\in\Phi_Q$ such that $\gamma$ is not of minimal length if $\Phi_Q$ is of type $G_2$ or $BC_l$,
and any $R_m$-linear homomorphism $\phi:V_\delta\to V_\gamma$, there is
$\lambda\in R\setminus m$ such that for any $v\in V_\delta$
$$
X_\gamma(\lambda\cdot ZY\cdot\phi(v))\in \bigl[X_\delta(Zv)^{F_m(E_P(R))},\, F_m(E_P(R[Y],YR[Y]))\bigr]
$$ inside $G(R_m[Y,Z])$.
\end{lem}
\begin{proof}
Set $H=\bigl[X_\delta(Zv)^{F_m(E_P(R))},\, F_m(E_P(R[Y],YR[Y]))\bigr]$.
Assume first that $\gamma$ is of maximal or medium length in $\Phi_Q$. Then by Lemma~\ref{lem:E-repr-alltomedlong} (iii)
$\phi$ is a finite sum of $E$-representable homomorphisms. By the addition formula~\eqref{eq:sum}
it is enough to prove the claim for each summand. So we can assume that $\phi$ is $E$-representable.
By Lemma~\ref{lem:chain-comm} this means that
there is a good chain $\beta_1,\ldots,\beta_n$ between $\delta$ and $\gamma$
in the sense of Definition~\ref{dfn:good},
as well as vectors $v_i\in V_{\beta_i}$,
$1\le i\le n$, such that
\begin{equation*}
\bigl[\ldots\bigl[\bigl[X_{\delta}(v),X_{\beta_1}(v_1)\bigr],
X_{\beta_2}(v_2)\bigr],\ldots,X_{\beta_n}(v_n)\bigr]
=X_{\gamma}\bigl(N_{\delta,\beta_1,\ldots,\beta_n}(v,v_1,\ldots,v_n)\bigr)
=X_{\gamma}(\phi(v))
\end{equation*}
for any $v\in V_\delta$.
By Lemma~\ref{lem:PS-12-cor} there is a single $\lambda\in R\setminus m$ such that
$X_{\beta_i}(\lambda Y v_i)\in F_m\bigl(E_P(R[Y],YR[Y])\bigr)$ for all $1\le i\le n$. Then
for any $v\in V_\delta$ one has
$$
X_{\gamma}\bigl(\lambda^{n} YZ\cdot \phi(v)\bigr)=
X_{\gamma}\bigl(N_{\delta,\beta_1,\ldots,\beta_n}(Zv,\lambda Yv_1,\lambda v_2,\ldots,\lambda v_n)
\bigr)\in H.
$$

Now assume that $\Phi_Q$ is of type $B_l$, $C_l$ or $F_4$, and $\gamma$ is a short root. Let $\beta$
be a long root such that $\gamma,\beta$ form a basis of a root system of type $B_2$. By Lemma~\ref{lem:E-repr-short}
any $R$-linear homomorphism $V_\delta\to V_\gamma$ is a finite sum of homomorphisms
$\phi:V_\delta\to V_\gamma$ satisfying the following:
there are $u\in V_{-(\gamma+\beta)}$ and $R$-linear homomorphisms $\psi_1:V_\delta\to V_{\beta+2\gamma}$,
$\psi_2:V_\delta\to V_{-\beta}$ such that for any $v\in V_\delta$ one has
$$
X_\gamma(\phi(v))=X_\gamma\bigl(N_{\beta+2\gamma,-(\beta+\gamma),1,1}(\psi_1(v),u)\bigr)=
\bigl[X_{\beta+2\gamma}(\psi_1(v)),X_{-(\beta+\gamma)}(u)\bigr]\cdot X_{-\beta}(\psi_2(v)),
$$
where
$$
\psi_2(v)=-N_{\beta+2\gamma,-(\beta+\gamma),1,2}(\psi_1(v),u).
$$
By the previous case there are $\lambda,\mu\in R\setminus m$ such that
$$
X_{\beta+2\gamma}(\lambda ZY\cdot \psi_1(v)),\ X_{-\beta}(\mu ZY\cdot \psi_2(v))\in
H.
$$
Also by Lemma~\ref{lem:PS-12-cor} there is $\nu\in R\setminus m$ such that
$X_{-(\beta+\gamma)}(\nu u)\in F_m(E_P(R))$. Then
$$
\begin{array}{rcl}
X_\gamma\bigl(\lambda\mu\nu ZY\phi(v)\bigr)&=&
\bigl[X_{\beta+2\gamma}\bigl(\lambda Z(\mu Y)\cdot\psi_1(v)\bigr),X_{-(\beta+\gamma)}(\nu u)\bigr]\cdot
X_{-\beta}\bigl(\mu Z(\lambda\nu^2Y)\cdot \psi_2(v)\bigr)\\
&\in& [H,F_m(E_P[R])]\cdot H\subseteq  H,\\
\end{array}
$$
as required.
\end{proof}

\begin{lem}\label{lem:ideal-locally}
Let $J$ be any ideal of $R_m$.
For any $\delta,\gamma\in\Phi_Q$ one has
\begin{equation}\label{eq:ideal-locally}
X_\gamma(ZY\cdot JV_\gamma) 
\subseteq \bigl[X_\delta(JV_\delta\otimes_{R_m}ZR_m[Z])^{F_m(E_P(R))},\, F_m(E_P(R[Y],YR[Y]))\bigr]
\end{equation}
inside $G(R_m[Z,Y])$.
\end{lem}
\begin{proof}
The claim follows immediately from Lemma~\ref{lem:Erepr-PQ}, unless
$\gamma$ is a root of minimal length in a root subsystem of type $G_2$ or $BC_l$. Assume the latter. Let
$\alpha\in\Phi_Q$ be a root such that $\gamma,\alpha$ is a system of simple roots in a root subsystem
of type $G_2$ or $BC_2$ in $\Phi_Q$. In the $BC_2$ case, one has
\begin{equation}\label{eq:bc2-extra}
[X_{\alpha+\gamma}(u),X_{-\alpha}(v)]=X_{\gamma}(N_{\alpha+\gamma,-\alpha,1,1}(u,v))\cdot
X_{2\gamma}(N_{\alpha+\gamma,-\alpha,2,2}(u,v))\cdot
X_{2\gamma+\alpha}(N_{\alpha+\gamma,-\alpha,2,1}(u,v))
\end{equation}
for any $u\in V_{\alpha+\gamma}\otimes_{R_m} R_m[Y,Z]$, $v\in V_{\alpha+\gamma}\otimes_{R_m} R_m[Y,Z]$.
Since $\im (N_{\alpha+\gamma,-\alpha,1,1})$ additively generates $V_{\gamma}$ by~Lemma~\ref{lem:const},
for any $w\in JV_\gamma$ one has
\begin{equation*}
X_\gamma(ZY\cdot w)\in \left<\left[
X_{-\alpha}(Z\cdot JV_{-\alpha}),\,X_{\alpha+\gamma}(YV_{\alpha+\gamma})\right],\;
X_{2\gamma}(Z^2Y^2\cdot JV_{2\gamma}),\; X_{2\gamma+\alpha}(Z^2Y\cdot JV_{2\gamma+\alpha})
\right>.
\end{equation*}
Since $V_\gamma$ is a finitely generated free $R_m$-module, the module
$JV_\gamma$ is also finitely generated; let $e_1,\ldots,e_k$ be a generating
system of $JV_\gamma$. By Lemma~\ref{lem:PS-12-cor} (applied to $\beta=\alpha+\gamma$)
one finds $\mu\in R\setminus m$ such that for all $1\le i\le k$
\begin{equation*}
X_\gamma(\mu ZY\cdot e_i)\in \left<
\left[X_{-\alpha}(Z\cdot JV_{-\alpha}),\,F_m(E_P(YR[Y]))\right],\;
X_{2\gamma}(Z^2Y^2\cdot JV_{2\gamma}),\; X_{2\gamma+\alpha}(Z^2Y\cdot JV_{2\gamma+\alpha})
\right>.
\end{equation*}
Since any $w\in JV_\gamma$ is an
$R_m$-linear combination of $e_i$, replacing $Z$ by its $R_m$-multiples and applying the addition formula~\eqref{eq:sum},
we conclude  that
\begin{equation*}
X_\gamma(ZY\cdot JV_\gamma)\subseteq \left<
\left[X_{-\alpha}(Z\cdot JV_{-\alpha}),\,F_m(E_P(YR[Y]))\right],\;
X_{2\gamma}(Z^2Y^2\cdot JV_{2\gamma}),\; X_{2\gamma+\alpha}(Z^2Y\cdot JV_{2\gamma+\alpha})
\right>.
\end{equation*}
Now~\eqref{eq:ideal-locally} follows by applying Lemma~\ref{lem:Erepr-PQ} to the roots $-\alpha$, $2\gamma$, and $2\gamma+\alpha$
instead of $\gamma$.

In the $G_2$ case the proof is the same except that one uses the following equality instead of~\eqref{eq:bc2-extra}:
\begin{equation}\label{eq:g2-extra}
\begin{array}{rcl}
[[X_{-2\alpha-3\gamma}(u),X_{\alpha+2\gamma}(v)],X_{\alpha+2\gamma}(w)]&=&
X_{\gamma}(N_{-2\alpha-3\gamma,\alpha+2\gamma,\alpha+2\gamma}(u,v,w))\cdot\\
&&\cdot X_{-\alpha}(\psi_1(u,v,w))\cdot X_{\alpha+3\gamma}(\psi_2(u,v,w)),
\end{array}
\end{equation}
where $\psi_1(u,v,w)=N_{-\alpha-\gamma,\alpha+2\gamma,2,1}\bigl(N_{-2\alpha-3\gamma,\alpha+2\gamma,1,1}(u,v),w\bigr)$
and
$$
\psi_2(u,v,w)=N_{-\alpha-\gamma,\alpha+2\gamma,1,2}\bigl(N_{-2\alpha-3\gamma,\alpha+2\gamma,1,1}(u,v),w\bigr)+
N_{\gamma,\alpha+2\gamma,1,1}\bigl(N_{-2\alpha-3\gamma,\alpha+2\gamma,1,2}(u,v),w\bigr)
$$
by the generalized Chevalley commutator formula~\eqref{eq:Chev}.
\end{proof}

\begin{lem}\label{lem:prod-extract}
Let $\tilde\alpha$ be the highest root of $\Phi_P$.
Consider $\mathbf{x}=\prod\limits_{\alpha\in\pi_{PQ}^{-1}(\tilde\alpha)}X_\alpha(v_\alpha)\in G(R_m)$, where
 $v_\alpha\in V_\alpha$ and the product is taken in any order compatible with the height of roots $\alpha\in\Phi_Q$.
Let $\gamma\in\Phi_Q$ be a root such that $\gamma$ is not of minimal length if $\Phi_Q$
is of type $G_2$ or $BC_l$. For any $\alpha\in\pi_{PQ}^{-1}(\tilde\alpha)$ and any $R_m$-linear
homomorphism $\phi:V_\alpha\to V_\gamma$
there is $\lambda\in R\setminus m$ such that
\begin{equation}\label{eq:prod-extract-claim}
X_\gamma(\lambda\cdot Y\cdot \phi(v_\alpha))\in \bigl[\mathbf{x}^{F_m(E_P(R))},\, F_m(E_P(R[Y],YR[Y]))\bigr]
\end{equation}
inside $G(R_m[Y])$.
\end{lem}
\begin{proof}
Fix an total order on $\Phi_Q^+$ compatible with height.
Write
$$
\mathbf{x}=\prod\limits_{\alpha\in\pi^{-1}_{PQ}(\tilde\alpha)}X_\alpha(v_\alpha),
$$
where the product is taken in the respective order, and let $S\subseteq\pi^{-1}_{PQ}(\tilde\alpha)$ be
the set of all $\alpha$ such that $v_\alpha\neq 0$. Note that if $\Phi$ is of type $G_2$ or $BC_l$, then then no root
in $\pi_{PQ}^{-1}(\tilde\alpha)$ is of minimal length.
Let $\alpha_0$ be a root of minimal height in $S$.  We prove the claim of the lemma
by inverse induction on the height of $\alpha_0$ (the first induction).

If $\alpha_0$ is the unique highest root of $\Phi_Q$, the claim of the lemma holds by Lemma~\ref{lem:Erepr-PQ} with $Z=1$.
Assume that $\alpha_0$ is not the highest root of $\Phi_Q$. We prove the claim~\eqref{eq:prod-extract-claim}
for all $\alpha\in S$ by induction with respect to the fixed total order on $\Phi_Q^+$, starting with $\alpha=\alpha_0$
(the second induction). Thus, the inductive step of the second induction consists in proving~\eqref{eq:prod-extract-claim}
for the fixed $\mathbf{x}$ and a particular $\alpha\in S$, given that~\eqref{eq:prod-extract-claim} is known for
all elements of $S$ which are strictly smaller than $\alpha$ with respect to the total order; we denote the set
of such elements of $S$ by $S'$. Note that we may have $S'=\emptyset$.
For all $\delta\in S'$ we let $\lambda_\delta\in R\setminus m$ be
the corresponding constant appearing in~\eqref{eq:prod-extract-claim}.

If $\alpha$ is the highest root of $\Phi_Q^+$, it is the only element of $S\setminus S'$,
and~\eqref{eq:prod-extract-claim} for $\alpha$ easily follows from~Lemma~\ref{lem:Erepr-PQ}. Otherwise
there is $\beta\in\Phi_Q^+$ such that $\alpha+\beta\in\Phi_Q$. Clearly, $i\alpha+j\beta$, where $i,j>0$, can be a root
only if $i=1$, since all such roots belong to $\pi_{PQ}^{-1}(\tilde\alpha)$. If $m_{\alpha\beta}$ is the maximal
positive integer such that $\alpha+m_{\alpha\beta}\beta\in\Phi_Q$, then the sequence $\beta,\ldots,\beta$, where
$\beta$ is repeated $m_{\alpha\beta}$ times, is a good chain between $\alpha$ and $\alpha+m_{\alpha\beta}\beta$.
By Lemma~\ref{lem:PS-12-cor} for any $u_1,\ldots,u_{m_{\alpha\beta}}\in V_\beta$ one can find $\mu\in R\setminus m$ such that
$X_\beta(\mu\cdot u_i)\in F_m(E(R))$ for all $1\le i\le m_{\alpha\beta}$, and moreover $\mu$ is divisible by
$\prod_{\delta\in S'}\lambda_\delta$. Clearly, one has
$$
\begin{array}{rcl}
[[\ldots[\mathbf{x},X_\beta(\mu u_1)],\ldots],X_\beta(\mu u_{m_{\alpha\beta}})]&=&
\prod\limits_{\delta\in S}[[\ldots[X_\delta( v_\delta),X_\beta(\mu u_1)],\ldots],X_\beta(\mu u_{m_{\alpha\beta}})]\\
&=&\prod\limits_{\delta\in S}\prod\limits_{i\ge m_{\alpha\beta}}X_{\delta+i\beta}(\mu^i\cdot v_{\delta,i}),\\
\end{array}
$$
where each $v_{\delta,i}\in V_{\delta+i\beta}$ depends linearly on $v_\delta$ by the generalized Chevalley commutator
formula~\eqref{eq:Chev}. Then,
since $\prod_{\delta\in S'}\lambda_\delta$ divides $\mu$,  for all $\delta\in S'$ one has
$$
X_{\delta+i\beta}(\mu^i\cdot v_{\delta,i})\in\bigl[\mathbf{x}^{F_m(E_P(R))},\, F_m(E_P(R))\bigr]
\subseteq\mathbf{x}^{F_m(E_P(R))}.
$$
 Set
$$
\mathbf{y}=\prod\limits_{\delta\in S\setminus S'}\prod\limits_{i\ge m_{\alpha\beta}}X_{\delta+i\beta}(\mu^i\cdot v_{\delta,i}).
$$
Then $\mathbf{y}\in \mathbf{x}^{F_m(E_P(R))}$.
Clearly, $\alpha+m_{\alpha\beta}\beta$ is a  root of minimal height among all roots
appearing in $\mathbf{y}$, and it appears only once. In particular, $\mathbf{y}$
is subject to the inductive assumption of the first induction.
Hence for any $R_m$-linear homomorphism $\psi:V_{\alpha+m_{\alpha\beta}\beta}\to V_\gamma$, where $\gamma\in\Phi_Q$
is not of minimal length if $\Phi_Q$ is of type $G_2$ or $BC_l$, there is $\kappa\in R\setminus m$
such that
\begin{equation}\label{eq:Xgamma-y}
\begin{array}{rcl}
X_{\gamma}\bigl(\kappa Y\mu^{m_{\alpha\beta}}\cdot
\psi(v_{\alpha,m_{\alpha\beta}})\bigr)&\in&
\bigl[\mathbf{y}^{F_m(E_P(R))},\, F_m(E_P(R[Y],YR[Y]))\bigr]\\
&\subseteq&
\bigl[\mathbf{x}^{F_m(E_P(R))},\, F_m(E_P(R[Y],YR[Y]))\bigr].\\
\end{array}
\end{equation}
One has $v_{\alpha,m_{\alpha\beta}}=N_{\alpha,\beta,\ldots,\beta}(v_\alpha,u_1,\ldots,u_{m_{\alpha\beta}})$
by the definition of $m_{\alpha\beta}$
and Lemma~\ref{lem:chain-comm}.
Thus,~\eqref{eq:Xgamma-y} would imply that~\eqref{eq:prod-extract-claim} holds for $\alpha$, once we
show that any $R_m$-linear homomorphism $V_\alpha\to V_\gamma$ is a finite sum of $R_m$-linear homomorphisms
that factor through
$$
N_{\alpha,\beta,\ldots,\beta}(-,u_1,\ldots,u_{m_{\alpha\beta}}):V_\alpha\to V_{\alpha+m_{\alpha\beta}\beta}.
$$
Since $\beta,\ldots,\beta$ ($m_{\alpha\beta}$ times) is a good chain between $\alpha$ and
$\alpha+m_{\alpha\beta}\beta$, by Lemma~\ref{lem:E-repr-alltomedlong} (i)
any $R_m$-linear homomorphism $V_\alpha\to V_{\alpha+m_{\alpha\beta}\beta}$ is a finite sum of
homomorphisms that factor through $N_{\alpha,\beta,\ldots,\beta}(-,u_1,\ldots,u_{m_{\alpha\beta}})$. On the other hand,
any homomorphism $V_\alpha\to V_\gamma$ is a finite sum of homomorphisms that factor through $V_{\alpha+m_{\alpha\beta}\beta}$,
since all these modules are finitely generated projective, and hence free, $R_m$-modules.
\end{proof}

\begin{lem}\label{lem:highest-M}
Let $\tilde\alpha\in\Phi_P$ be the root of maximal height in $\Phi_P$. There is an ideal $I$ in $R$
such that $M_{\tilde\alpha}=IV_{\tilde\alpha}$ and $M_{-\tilde\alpha}=IV_{-\tilde\alpha}$.
\end{lem}
\begin{proof}
By~\eqref{eq:sum} $M_{\tilde\alpha}$ is an additive subgroup of $V_{\tilde\alpha}$. Let $\hat\alpha$ denote one of $\tilde\alpha$,
$-\tilde\alpha$. We will show that for any $R$-linear
homomorphism $\phi:V_{\tilde\alpha}\to V_{\hat\alpha}$ one has $\phi(M_{\tilde\alpha})\subseteq M_{\hat\alpha}$.
First, this will imply that
$M_{\tilde\alpha}$ is an $R$-submodule of $V_{\tilde\alpha}$, and, moreover,
since $V_{\tilde\alpha}$ is a faithfully (recall that the type of $P$ is constant)
projective $R$-module, that there is an ideal $I$ of $R$ such that $IV_{\tilde\alpha}=M_{\tilde\alpha}$.
Then it follows that $IV_{-\tilde\alpha}\subseteq M_{-\tilde\alpha}$. Since $\tilde\alpha$ and
$-\tilde\alpha$ are symmetric, we conclude that $M_{-\tilde\alpha}=IV_{-\tilde\alpha}$ as well.

Now we prove that for any $R$-linear
homomorphism $\phi:V_{\tilde\alpha}\to V_{\hat\alpha}$ one has $\phi(M_{\tilde\alpha})\subseteq M_{\hat\alpha}$.
In order to do that,
clearly, it is enough to show that for any $v\in M_{\tilde\alpha}$ and any maximal ideal $m$ of $R$ there is
$\lambda\in R\setminus m$ such that $\lambda\cdot R\cdot \phi(v)\subseteq M_{\hat\alpha}$,
and then apply decomposition of unity in $R$.

Fix $v\in M_{\tilde\alpha}$ and a maximal ideal $m$ of $R$. We denote by the same letter $\phi$ the $R_m$-linear extension
of this map to a map $V_{\tilde\alpha}\otimes_R R_m\to V_{\hat\alpha}\otimes_R R_m$.
Let $Q$ be a minimal parabolic subroup of $G_{R_m}$ contained in $P_{R_m}$, so that $\Phi_Q$ is an abstract root
system of rank $\ge 2$.
Clearly, $V_{\tilde\alpha}\otimes_R R_m=\bigoplus_{\beta\in\pi_{PQ}^{-1}(\tilde\alpha)} V_\beta$,
so that $F_m(v)=\sum_{\beta\in\pi_{PQ}^{-1}(\tilde\alpha)} v_\beta$ for some $v_\beta\in V_\beta$.
Set
$$
\mathbf{x}=F_m(X_{\tilde\alpha}(v))=X_{\tilde\alpha}(F_m(v)).
$$
Then by Lemma~\ref{lem:PQ-relroots}  we have
$$
\mathbf{x}=\prod\limits_{\beta\in \pi_{PQ}^{-1}(\tilde\alpha)}X_\beta( v_\beta).
$$
Now for every $\beta\in\pi_{PQ}^{-1}(\tilde\alpha)$ one has
$$
\phi(v_\beta)=\sum\limits_{\gamma\in\pi_{PQ}^{-1}(\hat\alpha)}\phi_{\beta\gamma}(v_\beta),
$$
where $\phi_{\beta\gamma}:V_\beta\to V_\gamma$ is the $R_m$-linear homomorphism obtained by composing
$\phi|_{V_\beta}$
with the projection of $V_{\hat\alpha}\otimes_R R_m$ to $V_\gamma$.
Observe that if $\Phi_Q$ is of type $G_2$ or $BC_l$, then $\pi_{PQ}^{-1}(\pm\tilde\alpha)$ contains
no roots of $\Phi_Q$ of minimal length, since all coefficients in their decomposition in terms of simple roots
are strictly smaller than those of the highest root. Then by Lemma~\ref{lem:prod-extract}
for any $\beta\in\pi_{PQ}^{-1}(\tilde\alpha)$, $\gamma\in\pi_{PQ}^{-1}(\hat\alpha)$ there exists
$\mu_{\beta\gamma}\in R\setminus m$  such that
$$
X_\gamma(\mu_{\beta\gamma}Y\cdot\phi_{\beta\gamma}(v_\beta))\in \bigl[\mathbf{x}^{F_m(E_P(R))},\, F_m(E_P(R[Y],YR[Y]))\bigr].
$$
Set $\mu=\prod_{\beta,\gamma}\mu_{\beta\gamma}$ and let $\mu'_{\beta\gamma}$ denote the same product
without the factor $\mu_{\beta\gamma}$. Then one has
\begin{equation*}
\begin{array}{rcl}
F_m\bigl(X_{\hat\alpha}(\mu Y\cdot \phi(v))\bigr)
&=&\prod\limits_{\hbox to 9pt{$\st\gamma\in\pi_{PQ}^{-1}(\hat\alpha)$}}
\ X_\gamma\Bigl(\mu Y\cdot \sum\limits_{\hbox to 9pt{$\st\beta\in\pi_{PQ}^{-1}(\tilde\alpha)$}}\ \phi_{\beta\gamma}(v_\beta)\Bigr)\\
&=&
\prod\limits_{\gamma,\beta}
X_\gamma\Bigl(\mu_{\beta\gamma}  (\mu'_{\beta\gamma}Y)\cdot \phi_{\beta\gamma}(v_\beta)\Bigr)\\
&\in& F_m\left(\bigl[X_{\tilde\alpha}(v)^{E_P(R)},\, E_P(R[Y],YR[Y])\bigr]\right).
\end{array}
\end{equation*}
Then by Lemma~\ref{lem:local-polyinclusion} there is $\nu\in R\setminus m$ such that
$$
X_{\hat\alpha}(\nu\mu\cdot R\cdot \phi(v))\subseteq X_{\tilde\alpha}(v)^{E_P(R)}.
$$
This implies that
$\nu\mu\cdot R\cdot\phi(v)\subseteq M_{\hat\alpha}$, as required.
\end{proof}

\begin{lem}\label{lem:inclusion-M}
Let $\tilde\alpha\in\Phi_P$ be the root of maximal height in $\Phi_P$, and let $I$ be an ideal in $R$.
Then $X_\alpha(IV_\alpha)\subseteq X_{\tilde\alpha}(IV_{\tilde\alpha})^{E(R)}$
for all $\alpha\in\Phi_P^+$.
\end{lem}
\begin{proof}
We prove the claim by induction on the height of $\alpha$.
Fix a maximal localization $R_m$ of $R$, and
let $Q$ be a minimal parabolic subroup of $G_{R_m}$ contained in $P_{R_m}$, so that $\Phi_Q$ is an abstract root
system of rank $\ge 2$.
Take any $v\in IV_\alpha$.
By Lemma~\ref{lem:PQ-relroots} $X_\alpha(ZY\cdot F_m(v))$ is a product of factors $X_\delta(Z^iY^iv_\delta)$
for some $\delta\in\Phi_Q$, $v_\delta\in IV_\delta$, and $i\ge 1$.
Then by Lemma~\ref{lem:ideal-locally} one has
\begin{equation*}
X_\alpha(ZY\cdot F_m(v)) \in
\bigl[X_{\tilde\alpha}(ZR_m[Z]\otimes_R IV_{\tilde\alpha})^{F_m(E_P(R))},\, F_m(E_P(R[Y],YR[Y]))\bigr].
\end{equation*}
Replacing $Z$ by a suitable $\nu\in R\setminus m$, we can secure that
\begin{equation*}
F_m(X_\alpha(\nu Yv))\in
F_m\bigl(\bigl[X_{\tilde\alpha}(IV_{\tilde\alpha})^{E_P(R)},\, E_P(R[Y],YR[Y])\bigr]\bigr).
\end{equation*}
Applying Lemma~\ref{lem:local-polyinclusion}, we conclude that there is $\lambda\in R\setminus m$
such that
\begin{equation*}
X_\alpha(\lambda\nu Rv)\subseteq X_{\tilde\alpha}(IV_{\tilde\alpha})^{E(R)}.
\end{equation*}
Using the decomposition of unity in $R$, the addition formula~\eqref{eq:sum} and the induction hypothesis on the height of $\alpha$, we conclude that
$X_\alpha(v)\in X_{\tilde\alpha}(IV_{\tilde\alpha})^{E(R)}$.
\end{proof}

\begin{proof}[Proof of Theorem~\ref{thm:E-normal}]
By Lemma~\ref{lem:highest-M} there is an ideal $I\subseteq R$ such that $M_{\tilde\alpha}=IV_{\tilde\alpha}$
and $M_{-\tilde\alpha}=IV_{-\tilde\alpha}$. By Lemma~\ref{lem:inclusion-M} one has $IV_\alpha\subseteq M_\alpha$
for any $\alpha\in\Phi_P$. We show that $M_\alpha\subseteq IV_\alpha$.
Without loss of generality assume that
$\alpha\in\Phi_P^+$. After passing from the ring $R$ to $R/I$, we can assume that $M_{\tilde\alpha}=0$, and we need to show that $M_\alpha=0$.
 There is a sequence of relative roots $\beta_1,\beta_2,\ldots,\beta_k\in\Phi_P^+$
such that $\gamma_i=\alpha+\beta_1+\ldots+\beta_i\in\Phi_P$ for all $1\le i\le k$, and
$\gamma_k=\alpha+\beta_1+\ldots+\beta_k=\tilde\alpha$.
Indeed, the corresponding statement for usual root systems is well-known, and the statement for relative roots readily follows
by applying $\pi_P$ to the respective chain and using height induction.
Assume that $v_0\neq 0$ is an element of $M_\alpha$.
By Lemma~\ref{lem:const} there are elements
 $u_i\in V_{\beta_i}$, $1\le i\le k$, such that all elements
$$
v_{i+1}=N_{\gamma_i,\beta_{i+1},1,1}(v_i,u_{i+1}),\quad 0\le i\le k-1,
$$
are non-zero. By the generalized Chevalley commutator formula~\eqref{eq:Chev} one has
$X_{\tilde\alpha}(v_k)\in X_{\alpha}(v_0)^{E_P(R)}$, hence $M_{\tilde\alpha}\neq 0$, a contradiction.

\end{proof}

\section{Proof of the main theorem}\label{sec:main-proof}

\subsection{Two general lemmas}
To prove our main result Theorem~\ref{thm:main},
we still need two more statements that do not use the invertibility of structure constants.

\begin{lem}\label{lem:YZ^3}
Let $R$ be a commutative ring, and let $G$ be a reductive group scheme over $R$.
Assume that $G$ has isotropic rank $\ge 1$ and $G_{R_m}$ has isotropic rank $\ge 2$ for any maximal ideal $m\subseteq R$.
Then  for any strictly proper parabolic subgroup $P$ of $G$ one has
$$
\bigl[E_P(R[Y,Z],Y\cdot R[Y,Z]),\,E_P(R[Y,Z],Z^3\cdot R[Y,Z])\bigr]\le E_P\bigl(R[Y,Z],YZ\cdot R[Y,Z]\bigr).
$$
\end{lem}
\begin{proof}
Since all group schemes involved are finitely presented, it is enough to prove the statement for any noetherian ring $R$.
Since any noetherian ring is a finite direct product of connected rings, we can assume that $R$ is connected and noetherian.

Using~\cite[Lemma 4.1]{St-poly}, one deduces
\begin{multline*}
[E_P(R[Y,Z],Y\cdot R[Y,Z]),\,E_P(R[Y,Z],Z^3\cdot R[Y,Z])]\\
\le [E_P(R[Y,Z],Y\cdot R[Y,Z]),\,E_P(Z^3\cdot R[Y,Z])]^{E_P(R[Y,Z])}\\
=[E_P(Y\cdot R[Y,Z])^{E_P(R)},\,E_P(Z^3\cdot R[Y,Z])]^{E_P(R[Y,Z])}.
\end{multline*}
Therefore, since $E_P(R[Y,Z],YZ\cdot R[Y,Z])$ is normal in $E_P(R[Y,Z])$,
it is enough to prove that
$$
g(Y,Z)=[X_\gamma(Yu)^h,X_\delta(Z^3v)]\in E_P(R[Y,Z],YZ\cdot R[Y,Z])
$$
for all $\gamma,\delta\in\Phi_P$, $u\in V_\gamma$, $v\in V_\delta$, $h\in E_P(R)$. We show that for any maximal
ideal $m$ of $R$ there are $\lambda,\mu\in R\setminus m$ such that $g(\lambda Y,\mu Z)\in E_P(R[Y,Z],YZ\cdot R[Y,Z])$.
Then the decomposition of unity in $R$ together with the height induction on $\gamma,\delta$ would imply the claim.

Let $m$ be a maximal ideal of $R$, and let $Q\le P_{R_m}$ be a minimal parabolic subgroup of $G$.
Our assumptions imply that every irreducible component $\Psi$ of $\Phi_Q$ has isotropic rank $\ge 2$, and
satisfies $\pi_{PQ}(\Psi)\neq 0$.
Since $U_{P_{R_m}}\le U_Q$, one has
$$
F_m(g(Y,Z))\in
[E_Q(R_m[Y],Y\cdot R_m[Y]),\,E_Q(Z^3\cdot R_m[Z])].\\
$$
The group $E_Q(Z^3\cdot R_m[Z])$ is generated by elements $X_\alpha(Z^3 u)$,
$\alpha\in\Phi_Q$ and $u\in V_\alpha\otimes_{R_m}R_m[Z]$.
By Lemma~\ref{lem:EI-gen} $E_Q(R_m[Y],Y\cdot R_m[Y])$ is generated by elements $Z_\beta(a,u_1,\ldots,u_{m_\beta})$,
$a\in E_\beta(R_m[Y])$, $u_i\in V_{i\beta}\otimes_{R_m} Y\cdot R_m[Y]$. If $\alpha$ and $\beta$ are non-collinear,
then by~\cite[Lemma 4.4]{St-poly} one has
$$
[Z_\beta(a,u_1,\ldots,u_{m_\beta}),\, X_\alpha(Z^3 u)]\in E_Q(YZ^3\cdot R_m[Y,Z]).
$$
On the other hand, by Lemma~\ref{lem:lemma11} there is a presentation
$$
X_\alpha(Z^3u)=\prod_{i=1}^k X_{\alpha_i}(Zv_i),
$$
where all $\alpha_i\in\Phi_Q$ are non-collinear to $\alpha$ and $v_i\in V_{\alpha_i}\otimes_{R_m}R_m[Z]$.
Hence if $\beta$ is collinear to $\alpha$, then by~\cite[Lemma 4.4]{St-poly}
$$
[Z_\beta(a,u_1,\ldots,u_{m_\beta}),\, X_\alpha(Z^3 u)]\in E_Q(YZ\cdot R_m[Y,Z])^{E_Q(Z\cdot R_m[Z])}.
$$
Summing up, we have
$$
[Z_\beta(a,u_1,\ldots,u_{m_\beta}),E_Q(Z^3\cdot R_m[Z])]\le E_Q(YZ\cdot R_m[Y,Z])^{E_Q(Z\cdot R_m[Z])}.
$$
Hence
$$
F_m(g(Y,Z))\in \bigl(E_Q(YZ\cdot R_m[Y,Z])^{E_Q(Z\cdot R_m[Z])}\bigr)^{E_Q(R_m[Y],Y\cdot R_m[Y])}.
$$
Let $R[Y,Z,T]$ be the polynomial ring in 3 variables over $R$. Then, clearly, there is
$$
h(Y,Z,T)\in \bigl(E_Q(T\cdot R_m[Y,Z])^{E_Q(Z\cdot R_m[Z])}\bigr)^{E_Q(R_m[Y],Y\cdot R_m[Y])}\le G(R_m[Y,Z,T])
$$
such that $F_m(g(Y,Z))=h(Y,Z,YZ)$. By~\cite[Lemma 15]{PS} there is $\mu\in R\setminus m$ such that
$$
h(Y,Z,\mu T)\in F_m\bigl(E_P(R[Y,Z,T],T\cdot R[Y,Z,T])\bigr).
$$
Then $F_m(g(\mu Y,Z))=h(\mu Y,Z,\mu YZ)\in F_m\bigl(E_P(R[Y,Z],YZ\cdot R[Y,Z])\bigr)$. By~\cite[Lemma 14]{PS}
there is $\kappa\in R\setminus m$ such that $g(\kappa\mu Y,Z)\in E_P(R[Y,Z],YZ\cdot R[Y,Z])$. This finishes the proof.
\end{proof}

The proof of the main result of~\cite{RR} uses bounded generation of the elementary subgroup
of a Chevalley group over the profinite completion $\hat R$ of a commutative ring $R$ with respect to elementary root
generators. In our case, we need it for quasi-split groups.

\begin{lem}\label{lem:bounded-gen}
Let $R$ be a connected semilocal ring, $G$ a quasi-split simply connected reductive group over $R$ such that
the absolute root system $\Phi$ of $G$ is irreducible,
and let $P$ be a strictly proper parabolic
subgroup of $G$. Then $G(R)=E_P(R)$, and there is an integer $N>0$, depending only on $\Phi$, such that each element of
$G(R)$ is a product
of $\le N$ elements of $U_P(R)$ and $U_{P^-}(R)$.
\end{lem}
\begin{proof}
Let $B$ be a Borel subgroup of $G$ contained in $P$. Since $\Phi$ is irreducible,
$\Phi_B$ is irreducible. Then by Lemma~\ref{lem:lemma12} (i) we can assume $P=B$.
The group $G$ is either split, or quasi-split of outer type ${}^2A_n$, $n\ge 2$; ${}^2D_n$, $n\ge 4$;
${}^{3(6)}D_4$ or ${}^2E_6$. Let
$T$ be the maximal torus $B\cap B^-$ of $G$. By~\cite[Exp. XXVI, Th\'eor\`eme 5.1]{SGA3} we have
$$
G(R)=U_B(R)U_{B^-}(R)T(R)U_B(R).
$$
Therefore, it is enough to show that $T(R)$ is boundedly generated by the elements of $U_B(R)$ and $U_{B^-}(R)$.
In what follows we use the terminology and notation of~\cite{PS-tind} and~\cite[Exp. XXIV]{SGA3}.
Let $R\to S$ is a connected Galois ring extension splitting the Dynkin scheme $\Dyn(G)$ of $G$.
Then $T$ is a product of
maximal tori of the standard subgroups $H$ of $G$ of the form $\SL_{2,R}$, $\SU_{3,S/R}$ or
$R_{S/R}(\SL_{2,S})$, corresponding to the distinct $\Gal(S/R)$-orbits of $\Dyn(G)$,
or, in other words, to the simple relative roots $\alpha=\alpha(H)\in\Phi_B$ (see
e.g.~\cite[Proposition 1]{PS-tind}). Here $R_{S/R}(\SL_{2,S})$ denotes
the Weil restriction of $\SL_{2,S}$ from $S$ to $R$, and $\SU_{3,S/R}$ stands for the group of type ${}^2A_2$
split by the extension $S/R$. For all three types of groups one readily sees that every element of $H(R)$ is a product
of $\le 9$ elements of $U_{(\pm\alpha)}(R)$, where $U_{(\pm\alpha)}$ are the unipotent radicals
of opposite parabolic subgroups $B^\pm\cap H$.
The corresponding formulas for $\SU_{3,S/R}$ can be found, for example, in~\cite[p. 196]{Abe-twisted}.
\end{proof}

\subsection{Proof of Theorem~\ref{thm:main}}

From now on, assume the setting of Theorem~\ref{thm:main}. We fix a strictly proper parabolic $R$-subgroup
$P$ of $G$ with a Levi subgroup $L_P$, a system of relative roots $\Phi_P$ and relative root subschemes
$X_\alpha(V_\alpha)$, $\alpha\in\Phi_P$.
We also denote by
$$
p:\widehat{E(R)}\to \overline{E(R)}
$$
the natural homomorphism between completions.

We denote by $R_m$ the localization of $R$ at a maximal ideal $m$, and by $\hat R_m$ the respective
$m$-adic completion.
Let $\mathcal{I}$ denote the set of all ideals of finite index in $R$, and let $\mathcal{M}$ denote the
subset of all maximal ideals of finite index. Let
$$
\hat R=\colim\limits_{I\in\mathcal{I}} R/I
$$
be the profinite completion of $R$.
By~\cite[Lemma 2.1]{RR} (note that in~\cite{RR} $R_m$ denotes the completion of $R$ at $m$) there is a natural isomorphism of topological rings
$$
\hat R\cong\prod_{m\in\mathcal{M}}\hat R_m.
$$
Note that the localization
homomorphism $F_m:\hat R\to \hat R_m$ is the canonical projection.

\begin{lem}\label{lem:easy-completion}
(i) One has $\overline{E(R)}=\overline{G(R)}$, and this group is naturally isomorphic to
$$G(\hat R)=E(\hat R)=\prod\limits_{m\in\mathcal{M}} E(\hat R_m).$$

(ii) For any $\alpha\in\Phi_P$ the homomorphism $p$ maps
the closure $\widehat{X_\alpha(V_\alpha)}$ of $X_\alpha(V_\alpha)$ in $\widehat{E(R)}$ homeomorphically onto $X_\alpha(V_\alpha\otimes_R \hat R)\subseteq
E(\hat R)$. Consequently, $p$ maps the closure $\widehat{U_{(\alpha)}(R)}$ of $U_{(\alpha)}(R)$ homeomorphically
onto $U_{(\alpha)}(\hat R)$.

(iii) For any $\mu,\nu\in\hat R$ let
$$
ev_{\mu,\nu}:G(R[Y,Z])\to G(\hat R)
$$
denote the evaluation at $Y=\mu$, $Z=\nu$.
There is a group homomorphism
$$
\hat{ev}_{\mu,\nu}:E(R[Y,Z])\to \widehat{E(R)}
$$
such that
$p\circ \hat{ev}_{\mu,\nu}=ev_{\mu,\nu}|_{E(R[Y,Z])}$,
and for any $g(Y,Z)\in E(R[Y,Z])$ the induced map
$$
g\colon \hat R\times\hat R\to \widehat{E(R)},\qquad (\mu,\nu)\mapsto \hat{ev}_{\mu,\nu}(g(Y,Z)),
$$ is continuous.
\end{lem}
\begin{proof}
(i) Each ring $\hat R_m$ is a local complete
ring with a finite residue field, hence $G$ is quasi-split over $\hat R_m$ by Lang's theorem
and~\cite[Exp. XXVI, \S 7.15]{SGA3}.
Then by Lemma~\ref{lem:bounded-gen} the group $G(\hat R)=E(\hat R)$ is boundedly generated by $X_\alpha(v)$,
$\alpha\in\Phi_P$, $v\in V_\alpha\otimes_R\hat R$. Then one shows exactly as in~\cite[Prop. 2.5]{RR} that
$\overline{E(R)}=\overline{G(R)}\cong E(\hat R)$.

(ii) By Theorem~\ref{thm:E-normal} for every normal subgroup $N$ of $E(R)$ there exists an ideal $I$ such that
$N\cap X_\alpha(V_\alpha)=X_\alpha(V_\alpha\otimes_R I)$ for all $\alpha\in\Phi_P$.
For any non-multipliable root $\alpha$ the subscheme $X_\alpha(V_\alpha)=U_{(\alpha)}(R)$ is a subgroup of
$E(R)$, and hence $X_\alpha(V_\alpha\otimes_R I)$ is of finite index in $X_\alpha(V_\alpha)$. Then
$I$ is of finite index in $R$. Then the restrictions of the profinite and congruence topologies on $E(R)$ to
$X_\alpha(V_\alpha)$
are the same for any $\alpha\in\Phi_P$. This implies the first claim. The second one follows by Lemma~\ref{lem:rootels} (iv).

(iii) The claims follow immediately from (ii), since the group product in $\widehat{E(R)}$ is continuous.
\end{proof}

\begin{lem}\label{lem:subgroups-in-completion}
For any $m\in\mathcal{M}$, let $Q_m$ be a minimal parabolic $R_m$-subgroup of $G_{R_m}$
contained in $P_{R_m}$. There are injective maps $\hat X_\alpha$,  $\alpha\in\Phi_{Q_m}$, that make
the diagram
\begin{equation*}
\xymatrix@R=20pt@C=35pt{
V_\alpha\otimes_{R_m}\hat R_m\ar[d]^{X_\alpha}\ar[r]^{\hat X_\alpha}&\widehat{E(R)}\ar[d]^{p}\\
\prod\limits_{m\in\mathcal{M}} E(\hat R_m)\ar[r]^{\cong}&\overline{E(R)}\\
}
\end{equation*}
commutative, and satisfy the following properties.

(i) The natural homomorphism $s_{Q_m}:St_{Q_m}(\hat R_m)\to E(\hat R_m)$ factors through
the homomorphism $p|_{\hat\Gamma_m}:\hat\Gamma_m\to E(\hat R_m)\le \overline{E(R)}$, where
$\hat\Gamma_m$ denotes the subgroup of $\widehat{E(R)}$ generated by
all subsets $\hat X_\alpha(V_\alpha\otimes_R\hat R_m)$, $\alpha\in\Phi_{Q_m}$.

(ii) For any two ideals $m,n\in\mathcal{M}$, the subgroups $\hat\Gamma_m$ and $\hat\Gamma_n$
commute elementwise inside $\widehat{E(R)}$.

(iii) For any $\delta\in\Phi_P$ one has
\begin{equation}\label{eq:XP-hat}
\widehat{U_{(\delta)}(R)}=\prod\limits_{m\in\mathcal{M}} \prod_{i\ge 1}\prod_{\alpha\in\pi_{PQ_m}^{-1}(i\delta)}\!\!\!
\hat X_\alpha(V_\alpha\otimes_{R_m}\hat R_m),
\end{equation}
where $\widehat{U_{(\delta)}(R)}\cong U_{(\delta)}(\hat R)$ is the closure of $U_{(\delta)}(R)$ in $\widehat{E(R)}$.
\end{lem}
\begin{proof}
First we define $\hat X_\alpha(\kappa\cdot v)$ for all $\alpha\in\Phi_{Q_m}$, $v\in V_\alpha$, $\kappa\in\hat R_m$.
By Lemma~\ref{lem:nu-lift} there is $\nu\in R\setminus m$ and $h(Y)\in E_P(R[Y],YR[Y])$ such that $F_m(h(Y))=X_\alpha(\nu Yv)$.
Denote by
$$f_m:\hat R_m\to \hat R
$$ the map
that sends $x$ to the element of $\hat R$ that has $x$ in the $m$-th position, and zero everywhere else.
Set $\mu=f_m(\kappa\cdot\nu^{-1})\in\hat R$, and define
\begin{equation}\label{eq:def-hatX}
\hat X_\alpha(\kappa\cdot v)=\hat{ev}_\mu(h(Y)),
\end{equation}
where
$$
\hat{ev}_\mu=\hat{ev}_{\mu,0}|_{E(R[Y])}:E(R[Y])\to\widehat{E(R)}
$$
is the restriction of the homomorphism $\hat{ev}_{\mu,\nu}$ of Lemma~\ref{lem:easy-completion} (iii).
Then we have
$$
F_m\circ p(\hat X_\alpha(\kappa\cdot v))=F_m\bigl(ev_{\mu}(h(Y))\bigr)=
ev_\mu\bigl(F_m(h(Y))\bigr)=X_\alpha(\nu F_m(\mu) v)=X_\alpha(\kappa v),
$$
as required. Observe that $\hat X_\alpha(\kappa\cdot v)$ is independent of a particular choice of $\nu$ and $h(Y)$.
Indeed, let $\nu'$ and $h'(Y)$ be another such pair. 
By Lemma~\ref{lem:nu-lift} there are $\nu_1,\nu_2\in R\setminus m$ such that $h(\nu_1 Y)=h'(\nu_2 Y)$.
Since
$$F_m(h(\nu_1 Y))=F_m(h'(\nu_2 Y))=X_\alpha(\nu\nu_1 v)=X_\alpha(\nu'\nu_2 v),
$$
 we have
$\nu_1^{-1}\nu_2=\nu{\nu'}^{-1}$.
 Then
$$
\begin{array}{rl}
\hat{ev}_{\mu}(h(Y))&=\hat{ev}_{\mu\cdot f_m(\nu_1^{-1})}(h(\nu_1 Y))=
\hat{ev}_{\mu\cdot f_m(\nu_1^{-1})}(h'(\nu_2 Y))=\hat{ev}_{\mu\cdot f_m(\nu_1^{-1}\nu_2)}(h'(Y))\\
&=\hat{ev}_{\mu\cdot f_m(\nu{\nu'}^{-1})}(h'(Y))=\hat{ev}_{f_m(\kappa{\nu'}^{-1})}(h'(Y))
=\hat{ev}_{f_m(\mu')}(h'(Y)).
\end{array}
$$

Note that by the last statement of Lemma~\ref{lem:nu-lift}, the elements $\hat X_\alpha(v)$, where $\alpha\in\Phi_{Q_m}$, $v\in V_\alpha$,
satisfy the analogs of relations~\eqref{eq:sum-St} and~\eqref{eq:Chev-St} that hold in $St_{Q_m}(R_m)$. This allows to
extend $\hat X_\alpha$ from the set of all $\kappa v$, $v\in V_\alpha$, $\kappa\in\hat R_m$,
to all elements of $V_\alpha\otimes_{R_m}\hat R_m$ using the relation~\eqref{eq:sum-St} and
induction on the height of $\alpha$. This definition implies that $F_m\circ p\circ\hat X_\alpha=X_\alpha$,
and the natural congruence topology on
$\hat X_\alpha(V_\alpha\otimes_{R_m}\hat R_m)$ coincides with the profinite topology
induced from $\widehat{E(R)}$, since they coincide on subsets $\hat X_\alpha(\hat R_m\cdot v)$, $v\in V_\alpha$,
by the continuity property of $\hat{ev}_\mu$ proved in Lemma~\ref{lem:easy-completion}.
As a corollary, the relations~\eqref{eq:sum-St} and~\eqref{eq:Chev-St} are satisfied by all elements of
$\hat X_\alpha(V_\alpha\otimes_{R_m}\hat R_m)$, $\alpha\in\Phi_{Q_m}$,
by continuity with respect to congruence topology. This shows that
$s_{Q_m}:St_{Q_m}(\hat R_m)\to E(\hat R_m)$ factors through
the homomorphism $p|_{\hat\Gamma_m}:\hat\Gamma_m\to E(\hat R_m)$.

Now we check the property (ii). By continuity of $\hat X_\alpha$ and $\hat X_\beta$ we can assume that
 $v\in V_\alpha$ and $u\in V_\beta$. Let $\nu_\alpha\in R\setminus m$ and $h_\alpha(Y)\in E_P(R[Y],YR[Y])$
and $\nu_\beta\in R\setminus n$, $h_\beta(Y)\in E_P(R[Y],YR[Y])$ be the elements involved in the
definition~\eqref{eq:def-hatX} for $\alpha$ and $\beta$ respectively. Then for any $\lambda\in R$ one has
$$
\hat X_\beta(\nu_\beta\lambda u)=\hat{ev}_{f_n(1)}\bigl(h_\beta(\lambda Y)\bigr)=\hat{ev}_{f_n(1)^3}\bigl(h_\beta(\lambda Y)\bigr)=
\hat{ev}_{f_n(1)}\bigl(h_\beta(\lambda Y^3)\bigr).
$$
Note that by Lemma~\ref{lem:YZ^3} one has
$$
[h_\alpha(Y),h_\beta(\lambda Z^3)]\in E_P(R[Y,Z],YZ\cdot R[Y,Z]).
$$
Then, since $f_m(\nu_\alpha^{-1})\cdot f_n(1)=0$ in $\hat R$, one has
$$
[\hat X_\alpha(v),\hat X_\beta(\nu_\beta\lambda u)]=
\hat{ev}_{f_m({\nu_\alpha}^{-1}),f_n(1)}\bigl([h_\alpha(Y),\, h_\beta(\lambda Z^3)]\bigr)=1.
$$
Since $R$ is dense in $\hat R_n$, by continuity of $\hat X_\beta$ we have
$$
[\hat X_\alpha(v),\hat X_\beta(\nu_\beta\hat R_n u)]=1.
$$
Since $\nu_\beta\in (\hat R_n)^\times$, then $[\hat X_\alpha(v),\hat X_\beta(u)]=1$.

It remains to check the property (iii). By Lemma~\ref{lem:easy-completion} (ii) the homomorphism $p$ maps $\widehat{U_{(\delta)}(R)}$
homeomorphically onto $\prod_m U_{(\delta)}(\hat R_m)\le \overline{E(R)}$. Since the analog of~\eqref{eq:XP-hat}
holds in $\overline{E(R)}$, it is enough to show that the right hand side
of~\eqref{eq:XP-hat} is contained in $\widehat{U_{(\delta)}(R)}$. For any $\alpha\in\pi_{PQ_m}^{-1}(\delta)$ one has
$V_\alpha\le V_\delta\otimes_R R_m$, hence for any $v\in V_\alpha$ there is
$\lambda\in R\setminus m$ such that $\lambda v\in V_\delta$. Then by Lemma~\ref{lem:PQ-relroots} one has
$X_\alpha(\lambda Yv)\in F_m\bigl(U_{(\delta)}(YR[Y])\bigr)$. Then $\hat X_\alpha(\kappa\lambda v)\in\widehat{U_{(\delta)}(R)}$
for any $\kappa\in \hat R_m$ by the definition of $\hat X$. Hence
$\hat X_\alpha(V_\alpha\otimes_R \hat R_m)\subseteq\widehat{U_{(\delta)}(R)}$, as required.
\end{proof}

Now we are ready to prove Theorem~\ref{thm:main}.
With Lemma~\ref{lem:subgroups-in-completion} at our disposal, the rest of the proof is similar to the concluding part
of the proof of the main theorem of A. Rapinchuk and I. Rapinchuk~\cite[pp. 3109--3111]{RR}.

\begin{proof}[Proof of Theorem~\ref{thm:main}]
By Lemma~\ref{lem:easy-completion} we have $\overline{E(R)}=\overline{G(R)}$. We proceed to establish the centrality
of the congruence kernel $\CE=\ker(p)$ in $\widehat{E(R)}$.
We use the notation of Lemma~\ref{lem:subgroups-in-completion}.

Let $\Delta$ be the subgroup of $\widehat{E(R)}$ generated by
all $\hat\Gamma_m$, $m\in\mathcal{M}$. By Lemma~\ref{lem:easy-completion} (ii) combined with Lemma~\ref{lem:subgroups-in-completion} (iii)
the group $\Delta$ contains $U_{(\delta)}(\hat R_m)\le \widehat{U_{(\delta)}(R)}\cong U_{(\delta)}(\hat R)$ for any $m\in\mathcal{M}$
and $\delta\in\Phi_P$,
and hence it contains $U_{(\delta)}\bigl(\sum_{m\in\mathcal{M}}\hat R_m\bigr)$. Since
$\sum_{m\in\mathcal{M}}\hat R_m$ is dense in $\hat R$, we conclude that the closure of $\Delta$ contains the
image of $E(R)$, and hence $\Delta$ is dense in $\widehat{E(R)}$.

Denote by $\hat\Gamma_m'$ the subgroup of $\widehat{E(R)}$ generated by all
$U_{(\delta)}\bigl(\prod_{n\neq m}\hat R_n\bigr)$, $\delta\in\Phi_P$. Then $\Delta_m=\hat\Gamma_m\cdot \hat\Gamma_m'$
contains $U_{(\delta)}(\hat R)$ for any $\delta\in\Phi_P$, and hence $\Delta_m$ is also dense in $\widehat{E(R)}$.
By Theorem~\ref{thm:K2} the intersection $\CE\cap \hat\Gamma_m=\ker(p|_{\hat\Gamma_m})$ lies in the center
of $\hat\Gamma_m$ for any $m\in\mathcal{M}$. Since $\hat\Gamma_m$ centralizes
$\hat\Gamma_m'$, we conclude that $\hat\Gamma_m$ centralizes the intersection $\CE\cap \Delta_m$.

By Lemma~\ref{lem:bounded-gen} there is $N>0$ such that for any $m\in\mathcal{M}$ any element of $E(\hat R_m)$ is
a product of $\le N$ elements of $U_P(\hat R_m)$ and $U_{P^-}(\hat R_m)$. Hence any element of
$E(\hat R)$ is a product of $\le N$ elements of $U_P(\hat R)$ and $U_{P^-}(\hat R)$.
Then $p$ maps the subset
$$
S=\bigl(\widehat{U_P(R)}\cdot \widehat{U_{P^-}(R)}\bigr)^{2N}=\Bigl(\prod_{\delta\in\Phi_P^+}\widehat{U_{(\delta)}(R)}\cdot
\prod_{\delta\in\Phi_P^-}\widehat{U_{(\delta)}(R)}\Bigr)^{2N}\subseteq\widehat{E(R)}
$$
surjectively onto $E(\hat R)$. Note that $S$ is compact and $S\subseteq \Delta_m$ for any $m\in\mathcal{M}$;
then by~\cite[Lemma 4.3]{RR} we conclude that
$\CE\cap\Delta_m$ is dense in $\CE$.
Therefore, $\hat\Gamma_m$ centralizes $\CE$ for any $m\in\mathcal{M}$. Since these subgroups generate the dense
subgroup $\Delta$ in $\widehat{E(R)}$, we conclude that $\CE$ is central in $\widehat{E(R)}$.
\end{proof}

\begin{proof}[Proof of Corollary~\ref{cor:cong-problem}]
For any $\FF_q$-algebra $R$, set $K_1^{G}(R)=G(R)/E(R)$.
By~\cite[Lemma 4.4]{St-serr} the map
$$
K_1^G(A[X_1,\ldots,X_n,Y_1^{\pm 1},\ldots,Y_m^{\pm 1}])\to
K_1^G(A\otimes_{\FF_q}\FF_q(Y_1,\ldots,Y_m)[X_1,\ldots,X_n])
$$
is injective. Since $A\otimes_{\FF_q}\FF_q(Y_1,\ldots,Y_m)$ is a regular ring containing
a perfect field, by~\cite[Theorem 1.3]{St-poly} we have
$$
K_1^G(A\otimes_{\FF_q}\FF_q(Y_1,\ldots,Y_m)[X_1,\ldots,X_n])=K_1^G(A\otimes_{\FF_q}\FF_q(Y_1,\ldots,Y_m)).
$$
Since $A\otimes_{\FF_q}\FF_q(Y_1,\ldots,Y_m)$ is semilocal and noetherian,
it is a finite product of connected semilocal rings, and hence
we have $K_1^G(A\otimes_{\FF_q}\FF_q(Y_1,\ldots,Y_m))=1$ by Lemma~\ref{lem:bounded-gen}.
Therefore, $K_1^G(A[X_1,\ldots,X_n,Y_1^{\pm 1},\ldots,Y_m^{\pm 1}])=1$.
The proof is finished by applying Theorem~\ref{thm:main} to each connected factor $R$ of the ring
$A[X_1,\ldots,X_n,Y_1^{\pm 1},\ldots,Y_m^{\pm 1}]$.
\end{proof}

\renewcommand{\refname}{References}

\end{document}